\renewcommand{\cite}{\citep}
\newtheorem{definition}{Definition}
\newtheorem{thm}{Theorem}
\newtheorem{lem}{Lemma}[section]
\newtheorem{prop}{Proposition}
\newcounter{myalgctr}
\numberwithin{myalgctr}{section}
\newcommand{\ceil}[1]{{\lceil #1 \rceil}}
\newcommand{\pab}{p_{AB}}
\newcommand{\R}{\mathbb{R}}
\renewcommand{\P}{\mathbb{P}}
\newcommand{\E}{\mathbb{E}}
\newcommand{\abs}[1]{\left|#1\right|}
\newcommand{\Bern}[1]{\mathrm{Bernoulli}(#1)}
\newcommand{\iid}{\text{i.i.d}}
\newcommand{\Ybar}{\widebar{Y}}
\newcommand{\up}{\mathrm{up}}
\newcommand{\low}{\mathrm{low}}
\renewcommand{\epsilon}{\varepsilon}
\newcommand{\muhat}{\widehat{\mu}}
\newcommand{\ah}{\texttt{A-Hoeffding}\xspace}
\newcommand{\eb}{\texttt{E-Bernstein}\xspace}
\newcommand{\ho}{\texttt{HRMS-Bernstein}\xspace}
\newcommand{\hog}{\texttt{HRMS-Bernstein-GE}\xspace}
\newcommand{\bk}{\texttt{A-Bentkus}\xspace}
\newcommand{\CI}{\mathrm{CI}}
\newcommand{\confseq}{\mathrm{ConfSeq}}
\DeclareMathOperator*{\argmax}{arg\,max}
\newcommand{\Arm}{\mathbb{A}}
\newcommand{\ahat}{\widehat{\alpha}}
\newcommand{\one}{\mathbf{1}}
\newcommand*\samethanks[1][\value{footnote}]{\footnotemark[#1]}
\title{\bf Near-Optimal Confidence Sequences for Bounded Random Variables}
\author[1]{
    Arun Kumar Kuchibhotla\thanks{Equal Contribution.}\thanks{Email: {\tt arunku@stat.cmu.edu}}}
\author[2]{
    Qinqing Zheng\samethanks[1]\thanks{Email: {\tt zhengqinqing@gmail.com}.}}
\affil[1]{Department of Statistics \& Data Science, CMU}
\affil[2]{Facebook AI Research}
\date{}                     %% if you don't need date to appear
\begin{document}
\maketitle

\begin{abstract}
  Many inference problems, such as sequential decision problems like A/B  testing, adaptive sampling schemes like bandit selection, are often online in nature. The fundamental problem for online inference is to provide a sequence of confidence intervals that are valid uniformly over the growing-into-infinity sample sizes. To address this question, we provide a near-optimal confidence sequence for bounded random variables by utilizing Bentkus' concentration results. We show that it improves on the existing approaches that use the Cram{\'e}r-Chernoff technique such as the Hoeffding, Bernstein, and Bennett inequalities. The resulting confidence sequence is confirmed to be favorable in synthetic coverage problems, adaptive stopping algorithms, and multi-armed bandit problems.
\end{abstract}

\section[Intro]{Introduction}\label{sec:intro}
% The abundance of data over the decades has increased the demand for sequential algorithms and inference procedures in statistics and machine learning. For instance, when the data is too large to fit in a single machine, it is natural to make a single pass through the data and stop when a required criterion is satisfied.  This is also useful for sequential decision problems where the collection of new data can be costly and the objective is to  minimize the number of samples subject to the prescribed accuracy of the estimator.
% This is also useful for sequential decision problems where the collection of new data can be costly and the objective is to  minimize the number of samples subject to the prescribed accuracy of the estimator.
The abundance of data over the decades has increased the demand for sequential algorithms and inference procedures in statistics and machine learning.   For instance, when the data is too large to fit in a single machine, it is natural to split data into small batches and process one at a time.
Besides, many industry or laboratory data, like user behaviors on a website, patient records, temperature histories, are naturally generated and available in a sequential order. In both scenarios, the collection or processing of new data can be costly, and practitioners often would like to stop data sampling when a required criterion is satisfied. This gives the pressing call for algorithms that minimize the number of sequential samples subject to the prescribed accuracy of the estimator is satisfied.

Many important problems fit into this framework, %of sequential decision problems
including sequential hypothesis testing problems such as testing positiveness of the
mean~\cite{zhao2016adaptive}, testing equality of distributions and testing independence~\cite{balsubramani2016sequential,yang2017framework}, A/B
testing~\cite{johari2015always,johari2017peeking},  sequential probability ratio
test~\cite{wald2004sequential}, best arm identification for
multi-arm bandits (MAB)~\cite{zhao2016adaptive,yang2017framework}, etc.
All these applications require \emph{confidence sequences} to determine the
number of samples required for a certain guarantee.

% \paragraph{Example Usage of Confidence Sequence.}
A simple example to start from is estimating the mean of a random variable from sequentially 
available data. This is a classic
problem in statistics and widely applied to various applications.
An estimator $\widehat{\mu}$ is said to be $(\varepsilon,\delta)$-accurate for the mean
$\mu$ if $\mathbb{P}(|\widehat{\mu}/\mu - 1| \le \varepsilon) \ge 1 - \delta$
\cite{dagum2000optimal,mnih2008empirical,huber2019optimal}.
This means that the estimator has a relative error of at most $\varepsilon$ with probability
at least $1 - \delta$. In the sequential setting, one important question we would like to answer is
\emph{how many samples are needed to obtain an estimator of the mean that is
    $(\varepsilon, \delta)$ accurate? } \citet{mnih2008empirical}
shows the answer can be derived from a confidence sequence.
%Several problems fit into the framework of sequential decision problems including sequential hypothesis testing problems such as testing positiveness of the mean~\cite{zhao2016adaptive},
%testing equality of distributions and testing independence ~\cite{balsubramani2016sequential,yang2017framework}, A/B testing~\cite{johari2015always,johari2017peeking},  best arm identification
%for multi-arm bandits (MAB)~\cite{zhao2016adaptive,yang2017framework}, and $(\varepsilon, \delta)$-mean estimation~\cite{mnih2008empirical,huber2019optimal}.

% In this paper, we focus on the problem of sequential statistical inference via
% confidence sequences.
\begin{definition}
Let $Y_1, Y_2, \ldots$ be independent real-valued random variables, available sequentially,
with mean $\mu\in\mathbb{R}$. Given $\delta\in[0, 1]$, a $1-\delta$ \emph{confidence sequence} is a
sequence of confidence intervals $\confseq(\delta) = \{\CI_1(\delta), \CI_2(\delta),
\ldots\}$, where $\CI_n(\delta)$ is constructed on-the-fly after observing data sample $(Y_1,\ldots,Y_n)$, such that
\begin{equation}\label{eq:validity-guarantee}
\mathbb{P}\left(\mu~\in~ \mathrm{CI}_n(\delta) \quad\mbox{for all}\quad n\ge 1\right) ~\ge~ 1 - \delta.
\end{equation}
\end{definition}
\vskip-5pt
For the $(\varepsilon,\delta)$-mean estimation problem above, 
suppose one can construct a $1-\delta$ confidence sequence of $\mu$:
$$\confseq(\delta)  = \{\CI_n(\delta) = [\widebar{Y}_n - Q_n, \widebar{Y}_n + Q_n], \quad n \geq 1 \},$$
where $\Ybar_n$ is the empirical mean of the first $n$ samples.  \citet{mnih2008empirical}
shows that with number of samples $N = \min\{n:\,(1 - \varepsilon)\mathrm{UB}_n \le (1 +
\varepsilon)\mathrm{LB}_n\}$, where $\mathrm{UB}_n$ and $\mathrm{LB}_n$ are two simple
functions of the radius of the confidence intervals, the estimator
$\widehat{\mu} = (1/2)\mathrm{sign}(\widebar{Y}_N)[(1 - \varepsilon)\mathrm{UB}_N + (1 +
\varepsilon)\mathrm{LB}_N]$ is $(\epsilon, \delta)$-accurate. See Algorithm~\ref{alg:adaptive-stopping} in Section~\ref{sec:adaptive-stopping} for details.

The need for sequential algorithms has triggered a surge of
interest in developing sharp confidence sequences. 
Unlike the traditional confidence interval in statistics, the
guarantee~\eqref{eq:validity-guarantee} is non-asymptotic and is uniform over the sample
sizes. Ideally, we want $\mathrm{CI}_n(\delta)$ to reduce in width as either $n$ or
$\delta$ increase. Unfortunately, %but not surprisingly,
guarantee~\eqref{eq:validity-guarantee} is impossible to achieve
non-trivially\footnote{Of course, if we take $\mathrm{CI}_n(\delta) = (-\infty,
    \infty)$, then~\eqref{eq:validity-guarantee} is trivially satisfied.} without
further assumptions~\cite{bahadur1956nonexistence,singh1963existence}. In this paper, we
assume that the random variables are bounded: there exist known constants $L,
U\in\mathbb{R}$ such that $\mathbb{P}(L \le Y_i \le U) = 1$ for all $i\geq 1$, which yields $\mu\in[L, U]$.
% \footnote{
Although boundedness can be replaced by tail
assumptions such as sub-Gaussianity or polynomial tails, we will restrict our discussion
to the bounded case in this paper. 

% In the recent years, several
% confidence sequences were proposed by \emph{stitching} the fixed sample size confidence
% intervals \cite{zhao2016adaptive, mnih2008empirical, howard2018uniform}, and the fixed
% sample size intervals are derived from the concentration inequalities such as Hoeffding,
% Bernstein, or Bennett; see Section 2 of \citet{mnih2008empirical} for details. Although the techniques of stitching are slightly different for
% those methods, they are easily transferable from one to another.
% The tightness of the confidence sequence is mainly controlled by the sharpness of the
% fixed sample size concentration inequalities.
In recent years, several techniques have been proposed to construct confidence sequences~\cite{zhao2016adaptive, mnih2008empirical, howard2018uniform}. These confidence sequences can be thought as a generalization of classical fixed sample size concentration inequalities including Hoeffding, Bernstein, and Bennett. Arguably the simplest construction of a confidence sequence is based on {\it stitching} the fixed sample size concentration inequalities. Other techniques include self-normalization, method of mixtures or pseudo-maximization~\cite{victor2007pseudo,howard2018uniform}. The stitching method (unlike the others) makes use of a union bound (or Bonferroni inequality) which might result in a sub-optimal confidence sequence compared to those obtained from method of mixtures.

To the best of our knowledge, all the existing confidence sequences are built upon
concentration results that bound the moment generating function and follow the
Cram{\'e}r--Chernoff technique. The Cram{\'e}r--Chernoff technique leads to conservative bounds and can be
significantly improved~\cite{philips1995moment}.  In this paper, we leverage the refined
concentration results introduced by \citet{bentkus2002remark}.  We first develop a
``maximal'' version of Bentkus' concentration inequality. Based on it, we construct the
confidence sequence via stitching.  In honor of Bentkus, who pioneered this line of
refined concentration inequalities, we call our confidence sequence as \emph{Bentkus' Confidence Sequence}. The fixed sample size Bentkus concentration inequality is theoretically an improvement of the best possible Cram{\'e}r--Chernoff bound; see Theorem~\ref{thm:initial-maximal-ineq} and the discussion that follows. This improvement implies that stitching the Bentkus concentration inequality improves upon the stitching of the best possible Cram{\'e}r--Chernoff bound. Hence, our confidence sequence is an improvement on the stitched Hoeffding, Bernstein, and Bennett confidence sequences. Although this is an obvious fact, we find in applications that our confidence sequence leads to about 50\% reduction in sample complexity when compared to the classical ones. Surprisingly, we find in simulations that our confidence sequence also improves on the method of mixture confidence sequences that do not use a union bound like stitching. 

To summarize, our major contributions are as follows.%three-fold.
\vspace{-\medskipamount}
\begin{itemize}[leftmargin=*]\itemsep0em
    % \item We provide a \emph{near-optimal} pointwise concentration inequality based on the results of~\citet{bentkus2002remark,bentkus2004hoeffding} and~\citet{pinelis2006binomial}.
    % Unlike the Chernoff bounds, which can be infinitely suboptimal,
    % our bound is optimal up to $e^2/2$. In other words, 
    % our tail bound is at most $e^2/2$ times the best tail bound that can be obtained under our assumptions. 
    % Further, our bound is always smaller than the Chernoff bounds;
    % all algorithms that use classical concentration inequalities like Hoeffding or Bernstein can be improved substantially, by simply replacing them with the concentration inequalities discussed in this paper.
    \item We provide a self-contained introduction to \emph{near-optimal} concentration inequality based on the results of~\citet{bentkus2002remark,bentkus2004hoeffding} and~\citet{pinelis2006binomial}.
    Unlike the Cram{\'e}r--Chernoff bounds, which can be infinitely suboptimal,
    our bound is optimal up to $e^2/2$. In other words, 
    our tail bound is at most $e^2/2$ times the best tail bound that can be obtained under our assumptions. We believe ours is the first application of Bentkus' concentration inequality for confidence sequences and machine learning (ML) applications including the best-arm identification problem. All ML algorithms that use classical concentration inequalities like Hoeffding or Bernstein can be improved substantially, by simply replacing them with the concentration inequalities discussed in this paper.   
    
    \item We use these results in conjunction with a ``stitching'' method~\cite{zhao2016adaptive,mnih2008empirical} to construct non-asymptotic confidence sequences. At sample size $n$, for $\widebar{Y}_n = n^{-1}\sum_{i=1}^n Y_i$, the confidence interval is
    % \[
    $\mathrm{CI}_n(\delta) := [\widebar{Y}_n - q^{\low}_n(\delta), \widebar{Y}_n + q^{\up}_n(\delta)],$
    % \]
    for different values $q^{\low}_n(\delta),\,q_n^{\up}(\delta) \ge 0$ and they scale like $\sqrt{\mbox{Var}(Y_1)\log\log(n)/n}$ as $n\to\infty$. 
    % The optimal nature of our tail bounds implies that our confidence sequence is always shorter in length than the adaptive Hoeffding bound~\cite{zhao2016adaptive} and the empirical Bernstein bound~\cite{mnih2008empirical}.
    
    \item Similar to the Bernstein inequality, Bentkus' method utilizes the variance of $Y_i$'s. Therefore, 
    variance estimation is needed to make the stitched Bentkus confidence sequence actionable in practice.
    % The usual approach as in \citet{audibert2009exploration} and \citet{maurer2009empirical} uses a Bernstein style concentration for $(Y_i - \mu)$ and $(Y_i - \mu)^2$. Deviating from this, 
    We propose a closed form upper bound of the unknown variance based on 
    one-sided concentration for the non-negative variables $(Y_i - \mu)^2$ from \citet{pinelis2016optimal}.
    This one-sided concentration bound is an improvement on the classical Cram{\'e}r--Chernoff bound~\citep[Theorem 2.19]{pena2008self} for non-negative random variables. 
    Once again, this leads to a better upper bound on the unknown variance compared to the ones from~\citet{audibert2009exploration} and~\citet{maurer2009empirical}.
    % to obtain an closed form upper bound of the unknown variance. 
    % In Appendix I, 
    % We prove that a variant of our empirical variance bound (which needs further numerically solving an inequality) is theoretically better than \citet{audibert2009exploration}, see Appendix~\ref{appsec:alternative_variance_estimation}.
    
    % \item Bernstein-style confidence sequences use fixed upper and lower bounds $U- L$ and $L - U$ for $Y_i - \mu$. Because $Y_i\in[L, U]$, we utilize the latest confidence interval $[ \mu^{\low}, \mu^{\up}]$ and use $L - \mu^{\up} \le Y_i - \mu \le U - \mu^{\low}$. This leads to much better performance.

    % \item Even though a computable form of Bentkus inequality (Prop. 1) can be derived from Bentkus et al. (2006), solving it efficiently is non-trivial. 
    \item  We derive a computable form of the Bentkus' method based on \citet{bentkus2006domination}, and further provide a
    \emph{constant} time algorithm to compute it efficiently (see Appendix~\ref{appsec:computation-of-q-function}). In comparison,
    a brute-force method leads to a linear time complexity. 
    % We provide closed-form equations for our tail bound so that the exact numerical solution can be obtained easily. 
    
    % \footnote{This upper bound can even be improved using numerical solvers.}
    % This upper bound can even be improved using numerical solvers.
    \item  We conduct numerical experiments to verify our theoretical claims. Moreover,
    we apply the Bentkus confidence sequence to the $(\epsilon, \delta)$ mean estimation
    problem and the best-arm identification problem. For both problems, our method
    significantly reduces the sample complexity by about $\frac12$
    compared with the other methods.
   %  {\color{red}We further illustrate our method with best-arm identification in multi-arm bandit problems.}
   % In~\cite{mnih2008empirical}, $C_n$ is computed based on the empirical Bernstein inequality, while we use the $C_n$ obtained from our construction in Section~\ref{sec:unif-conf-seq}. We show in Section~\ref{sec:experiments} that the stopping time $N$ from our confidence sequence is smaller than that from empirical Bernstein confidence sequence and also has smaller variance.
\end{itemize}

\vspace{-\medskipamount}
% \paragraph{Organization.} 
The rest of this article is organized as follows.
Section~\ref{sec:related} reviews the related work. Section~\ref{sec:unif-conf-seq}
contains our theoretical results. Section~\ref{sec:experiments} presents the experiments that confirm the
superiority of our method. Section~\ref{sec:summary-future-directions}
summarizes the contributions and discusses some future directions.

\section{Related Work}
\label{sec:related}
% \qq{self-normalization review}
Several confidence sequences built on classical concentration inequalities have been proposed and can be applied to bounded
random variables. \citet{zhao2016adaptive} propose confidence sequences through
Hoeffding's inequality, assuming that $Y_i$'s are $\frac12$-sub-Gaussian. For random variables supported on $[L, U]$, this assumption is satisfied after scaling by $\frac{1}{U-L}$. However, this confidence sequence does not scale with the true variance and hence can be conservative. \citet{mnih2008empirical} building on~\citet{audibert2009exploration} construct a confidence sequence through Bernstein's inequality. Due to the nature of Bernstein's inequality, those intervals scale correctly with the true variance. % Both these confidence sequences are closed-form in nature.
The methods in these papers is stitching of fixed sample size concentration inequalities. As mentioned before, they make use of union bound and can have more coverage than required in practice. 
In probability,~\citet{darling1967confidence} and~\citet{victor2007pseudo} (among others) have considered confidence sequences based on martingale techniques and method of mixtures. These methods do not require union bound and can be sharper than the stitched confidence sequences.
More recently,~\citet{howard2018uniform} have unified the techniques of obtaining confidence sequences under a variety of assumptions on random variables. 
% The main message from this paper is that the width of any confidence sequence $\mathrm{CI}_n(\delta)$ satisfying~\eqref{eq:validity-guarantee} must be larger than $\sqrt{A^2(\log\log n)/n}$ as $n$ tends to infinity,
% \[\textstyle
% \mathrm{Width}(\mathrm{CI}_n(\delta)) ~\asymp~ \sqrt{{\sigma^2\log\log n}/{n}},\quad\mbox{as}\quad n\to\infty.
% \]
% where $A^2$ is the common variance of $Y_1, Y_2, \ldots$. 
This work builds on much of the existing statistics literature and we refer the reader to this paper for a detailed historical account.  

All the confidence sequences in the works mentioned above depend on concentration results that bound the moment generating function and follow the Cram{\'e}r-Chernoff technique. Such concentration results, and consequently the obtained confidence sequences, are conservative and can be significantly improved. To understand the deficiency of such concentration inequalities, consider for example the Bernstein's inequality: for $\widebar{Y}_n = \sum_{i=1}^n Y_i / n$,
\[\textstyle
\mathbb{P}\left(\sqrt{n}(\widebar{Y}_n - \mu) \ge t\right) \le 
% \exp\bigg(-\frac{t^2}{2A^2 + (U-L)t/(3\sqrt{n})}\bigg),
e^{-t^2/[2A^2 + (U-L)t/(3\sqrt{n})]},
\]
which scales like $\exp(-t^2/(2A^2))$, for ``small'' $t$.
However, the central limit theorem implies $$\mathbb{P}(\sqrt{n}(\widebar{Y}_n - \mu) \ge t) \approx 1 - \Phi(t/A) \le \frac{e^{-t^2/(2A^2)}}{\sqrt{2\pi((t/A)^2 + 1)}}.$$ See, e.g.,~\citet[Formula 7.1.13]{abramowitz1948handbook}. Therefore, Bernstein's inequality and the true tail differ by the scaling $\sqrt{2\pi(t^2/A^2+1)}$, which can be significant for large $t$. This scaling difference is referred to as the missing factor in~\citet{talagrand1995missing} and~\citet{fan2012missing}. This missing factor does not exist just with Bernstein's inequality but also with the optimal bound that could be derived from the Cram{\'e}r--Chernoff technique; see the discussion surrounding Eq. (1.4) of~\citet{talagrand1995missing}. This explains why a further improvement is possible and~\citet{bentkus2002remark} presents such sharper concentration inequalities. %were obtained in~\cite{bentkus2002remark,bentkus2004hoeffding,pinelis2006binomial}.
% In honor of Bentkus, who pioneered this line of refined concentration inequalities, we call our bounds as \emph{Bentkus' Inequalities}.
Our work essentially builds on the works~\citet{bentkus2002remark,bentkus2004hoeffding,pinelis2006binomial,pinelis2016optimal}, to derive a near-optimal concentration inequality, followed by an improved confidence sequence through the technique of stitching.%~\cite{howard2018uniform,mnih2008empirical}.
% %\vspace{-0.1in}

Given that Bentkus' concentration inequality is an improvement on the Cram{\'e}r--Chernoff inequalities and that the tightness of the stitched confidence sequence is mainly controlled by the sharpness of the fixed sample size concentration inequality used, our results are not entirely unexpected. Because the improvement we obtain over the existing confidence sequences is significant (Figs. 4-6), we believe this paper will be an important addition to the literature for practical ML applications. 
%%%%%%%%%%%%%%%%%%%%%%%%%%%%%%%%%%%%%%%%%%%%%%%%%%%%%%%%%%%%%%%%%%%%
%%%%%%%%%%%%%%%%%%%%%%%%%%%%%%%%%%%%%%%%%%%%%%%%%%%%%%%%%%%%%%%%%%%%
% %\vspace{-\medskipamount}
\section{Bentkus' Confidence Sequences}\label{sec:unif-conf-seq}
For any random variable $Y_i$ with mean $\mu$, $X_i = Y_i - \mu$ is mean zero and hence we will mostly restrict to the case of mean zero random variables. The result for general $\mu$ will readily follow; see Theorem~\ref{thm:empirical-Bentkus-arbitrary-mean}. We first discuss Bentkus' concentration inequality for bounded mean zero random variables. Afterwards, we present a refined confidence sequence through stitching. This confidence sequence is not readily actionable because it depends on the true variance of random variables. To address this, we present a practical version where we replace the true variance by an estimated upper bound. This provides an analog of the empirical Bernstein confidence sequence, and we call our method \emph{Empirical Bentkus Confidence Sequence}.

\paragraph{Assumptions.} Suppose $X_1, X_2, \ldots$ are independent random variables satisfying
\begin{equation}\label{eq:boundedness-conditions}
\mathbb{E}[X_i] = 0,\;\mbox{Var}(X_i) \le A^2_i,\;\mbox{and}\; \mathbb{P}(X_i > B) = 0,
\end{equation}
for all $i\ge1$.
We will first derive concentration inequalities under the one-sided bound assumption as in~\eqref{eq:boundedness-conditions} which only requires $X_i \le B$ almost surely.
To derive actionable versions of the concentration inequalities (with estimated variance), we will impose a two-sided bound assumption.%: $|X_i| \le B$ almost surely/

\subsection{Bentkus' Concentration Inequality for a fixed Sample Size}
We now present a near-optimal concentration inequality for $S_t = \sum_{i=1}^t X_i$ that holds uniformly over all sample sizes $t\le n$. 
The main idea behind the optimality is to replace the exponential function used in the Cram{\'e}r-Chernoff technique with a slowly growing function.
Fix $\alpha \in [0, \infty]$, and set $(a)_+ = \max\{a, 0\}$. It is easy to verify that for all $\nu \in\mathbb{R}$,
% \citet{pinelis2014optimal} shows that 
\begin{equation}\label{eq:inequality_pinelis}\textstyle
\one\{\nu \geq 0\} ~\leq~ (1 + {\nu}/{\alpha})^{\alpha}_+ ~\le~ e^{\nu}.
\end{equation}
% Let $S_n = \sum_{i=1}^n X_i$. 
Taking $\nu = \lambda(S_n - u)$ for some $\lambda > 0$ in inequality~\eqref{eq:inequality_pinelis} and applying expectation, we obtain for all $u \in\mathbb{R}$,% for $S_n$ (more generally, any real valued random variable),
\begin{equation}\label{eq:pinelis-implication}\textstyle
    \P(S_n \ge u) 
    \leq \inf_{\lambda \geq 0} \E \left[ ( 1 + {\lambda(S_n - u)}/{\alpha})^{\alpha}_{+} \right].
\end{equation}
The second inequality in~\eqref{eq:inequality_pinelis} readily shows that~\eqref{eq:pinelis-implication} is better than the Cram{\'e}r-Chernoff bound.
Reparameterizing $\lambda = {\alpha}/({u - x})$ with $x \leq u$ in~\eqref{eq:pinelis-implication}, we obtain
\begin{equation}\label{eq:bentkus_prob_bound}
    \P(S_n \ge u) 
    \leq \inf_{x \leq u} \frac{\E[(S_n - x)^\alpha_{+}]}{(u - x)^\alpha_+}, \quad \forall u \in \R.
\end{equation}
Next, we bound $\E[(S_n - x)^\alpha_{+}]$ for all random variables $X_i$'s satisfying~\eqref{eq:boundedness-conditions}. This should be done optimally in order to obtain a near-optimal concentration inequality. Surprisingly, for all $\alpha \ge 2$, $\E[(S_n - x)^\alpha_{+}]$ can be bounded in terms of a ``worst case'' two-point distribution satisfying~\eqref{eq:boundedness-conditions}.

% Our refined tail bounds are based on bounding $ \P(S_n \geq u)$ for a worst case two-point distribution satisfying our assumptions~\eqref{eq:boundedness-conditions}, exploiting inequality~\eqref{eq:bentkus_prob_bound} with $\alpha=2$.
% \qq{intuition of $G_i$s being worst case distributions. comment on the transfer function}
Define independent random variables $G_i, i\ge 1$ as
\begin{equation}\label{eq:G_distribution}
\begin{split}
    \mathbb{P}\left(G_i = -A_i^2/B\right) ~&=~ {B^2}/{(A^2_i + B^2)},\\
    \mathbb{P}\left(G_i = B\right) ~&=~ {A^2_i}/{(A^2_i + B^2)}.
\end{split}
\end{equation}
These random variables satisfy~\eqref{eq:boundedness-conditions} and $G_i$'s are the worst case random variables
%these random variables also represent the worst case scenario for random variables
satisfying~\eqref{eq:boundedness-conditions}, in the sense that for all $n\ge1, \alpha \ge 2,$ and $x\in\mathbb{R}$, 
\begin{equation}\label{eq:intermediate-optimality}
% \textstyle
    \textstyle \mathbb{E}[\left(\sum_{i=1}^n G_i - x\right)_+^\alpha] = \displaystyle \sup_{X_i\sim\eqref{eq:boundedness-conditions}}
    \textstyle \,\mathbb{E}[\left(\sum_{i=1}^n X_i - x\right)_+^\alpha],
\end{equation}
where the supremum is over all distributions of $X_i$'s satisfying~\eqref{eq:boundedness-conditions}. 
We refer the readers to~\citet[Eq. (11)]{bentkus2002remark} and~\citet[Theorem 2.1]{pinelis2006binomial} for the proof of~\eqref{eq:intermediate-optimality}. The definition of the ``worst-case'' distribution of $G_i$'s follows from finding the best quadratic function that upper bounds $t\mapsto(t - x)_+^{\alpha}$; see, e.g.,~\citet[Lemma 8]{burgess2019engineered}. \citet{pinelis2006binomial} proves that~\eqref{eq:intermediate-optimality} holds true when $t\mapsto(t - x)_+^{\alpha}$ is replaced with any function $t\mapsto f(t)$ that has a convex first derivative.

Inequality~\eqref{eq:bentkus_prob_bound} with $\alpha = 2$ and \eqref{eq:intermediate-optimality} show that\footnote{
% \citet[Theorem 2.1]{pinelis2006binomial} proves that the function $(\cdot - x)^\alpha_+$ with $\alpha \geq 2$
% can be replaced by any function with a convex first derivative. 
The function $\alpha\mapsto(1 + \nu / \alpha)^\alpha_+$ increases as $\alpha$ increases,
so using the smallest possible $\alpha$ leads to the best bound. Because
\eqref{eq:intermediate-optimality} only holds for $\alpha \geq 2$,
$\alpha = 2$ is optimal in this context.
} 
\begin{equation}\label{eq:bentkus_goal}
    \P(S_n \ge u) \leq \inf_{x \leq u} \frac{\E[(\sum_{i=1}^n G_n - x)^2_{+}]}{(u - x)^2_+},
\end{equation}
and we find $u$ such that the right hand side of \eqref{eq:bentkus_goal} is upper bounded by $\delta$.
Set $\mathcal{A} = \{A_1, A_2, \ldots\}$ as the collection of standard deviations and for $n\ge1$, define
\begin{equation}\label{eq:definition-P2-tilde}
\tilde{P}_{2,n}(u) ~:=~ \inf_{x \le u}\,\frac{\mathbb{E}[(\sum_{i=1}^n G_i - x)_+^2]}{(u - x)_+^2}.
\end{equation}
For $\delta\in[0,1]$, define $q(\delta; n, \mathcal{A}, B)$ as the solution to the equation $\tilde{P}_{2,n}(u) = \delta.$ In other words,
\begin{equation}\label{eq:quantile-A-B-2nd-version}
q(\delta; n, \mathcal{A}, B) = \tilde{P}_{2,n}^{-1}(\delta)
\end{equation}
% , $\delta\in[0,1]$, define $q(\delta; n, \mathcal{A}, B)$ as the solution (in $u$) to
% \begin{equation}\label{def:quantile-A-B}
% q(\delta) = u: \; \inf_{x:\,x \le u}\,\frac{\mathbb{E}[(\sum_{i=1}^n G_i - x)_+^2]}{(u - x)_+^2} ~=~  \delta.
% \end{equation}
The inverse exists uniquely for $\delta \ge \mathbb{P}(\sum_{i=1}^n G_i = nB)$ and is defined to be $nB+1$ if $\delta < \mathbb{P}(\sum_{i=1}^n G_i = nB)$.
% Let $S_n = \sum_{i=1}^n X_i$.
The following result provides a refined concentration inequality for $S_n = \sum_{i=1}^n X_i$.
%when $X_i$'s satisfy~\eqref{eq:boundedness-conditions}.
It is a ``maximal'' version of Theorem 2.1 of~\citet{bentkus2006domination}, see Appendix~\ref{appsec:proof-of-theorem-initial-maximal-ineq} for the proof.
% bounded random variables.
\begin{thm}\label{thm:initial-maximal-ineq}
Fix $n\ge1$. If $X_1, X_2, \ldots, X_n$ are independent random variables satisfying~\eqref{eq:boundedness-conditions}, then
% \begin{equation}\label{eq:pointwise-bnd-optimal}
% \mathbb{P}\left(\max_{1 \le t\le n}\,S_t \ge u\right) ~\le~ \inf_{x\le u}\,\frac{\mathbb{E}[(M_{n} - x)_+^2]}{(u - x)_+^2}.
% \end{equation}
% Furthermore, for any $\delta\in[0, 1]$, setting
% we get
% \qq{think about the presentation of $\tilde{q}$....}
\begin{equation}\label{eq:maximal-inequality}
\mathbb{P}\left(\max_{1 \le t \le n}\,S_t \ge q(\delta; n, \mathcal{A}, B)\right) ~\le~ \delta,\;\forall \delta\in[0, 1].
\end{equation}
Further, if $A_1 = \cdots = A_n = A$ and if $\tilde{q}(\cdot; A, B)$ is a function such that $\mathbb{P}(\max_{1\le t\le n}\,S_t \ge n\tilde{q}(\delta^{1/n}; A, B)) \le \delta$ for all $\delta\in[0, 1]$ 
% and all $X_1, \ldots, X_n$ satisfying~\eqref{eq:boundedness-conditions}, 
% \qq{Do you need to repeat this?}
then $q(\delta; n, \mathcal{A}, B) \le n\tilde{q}(\delta^{1/n}; A, B)$.% for all $\delta\in[0, 1]$.
\end{thm}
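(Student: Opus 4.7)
The proof naturally splits into the maximal inequality and the optimality statement, which I would tackle with distinct techniques.

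For the maximal inequality, my plan is to apply Doob's maximal inequality to the submartingale $M_t := (S_t - x)_+^2$. Since each $X_i$ is independent and mean zero, $S_t$ is a martingale with respect to the natural filtration $\mathcal{F}_t = \sigma(X_1, \ldots, X_t)$; and because $y \mapsto (y-x)_+^2$ is convex, $M_t$ is a non-negative submartingale by Jensen's inequality. Doob's maximal inequality then gives, for any $x \le u$,
\[
\mathbb{P}\!\left(\max_{1 \le t \le n}(S_t - x)_+^2 \ge (u - x)_+^2\right) \;\le\; \frac{\mathbb{E}[(S_n - x)_+^2]}{(u - x)_+^2}.
\]
Noting that $\{\max_t S_t \ge u\} \subseteq \{\max_t (S_t - x)_+^2 \ge (u - x)_+^2\}$, this transfers directly to an upper bound on $\mathbb{P}(\max_t S_t \ge u)$. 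I would then invoke~\eqref{eq:intermediate-optimality} (applicable since $\alpha = 2$) to dominate $\mathbb{E}[(S_n - x)_+^2]$ by the worst-case moment $\mathbb{E}[(\sum_i G_i - x)_+^2]$, and take the infimum over $x \le u$ to obtain $\mathbb{P}(\max_t S_t \ge u) \le \tilde{P}_{2,n}(u)$. Substituting $u = q(\delta; n, \mathcal{A}, B)$ and using the definition of $q$ as $\tilde{P}_{2,n}^{-1}(\delta)$ yields~\eqref{eq:maximal-inequality}.

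For the optimality statement, I would specialize the hypothesized validity of $\tilde{q}$ to the i.i.d.\ worst-case distribution $X_i = G_i$ (available when $A_i = A$), obtaining $\mathbb{P}(\max_t S_t^G \ge n\tilde{q}(\delta^{1/n}; A, B)) \le \delta$ with $S_t^G = \sum_{i\le t} G_i$. The target inequality $q(\delta; n, \mathcal{A}, B) \le n\tilde{q}(\delta^{1/n}; A, B)$ is equivalent (since $\tilde{P}_{2,n}$ is non-increasing and $q$ is its inverse) to showing $\tilde{P}_{2,n}\!\bigl(n\tilde{q}(\delta^{1/n}; A, B)\bigr) \le \delta$. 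My plan is to exploit the particular functional form $n\tilde{q}(\delta^{1/n}; A, B)$, which is precisely the scaling produced by Cram\'er--Chernoff-style MGF factorization $\mathbb{E}[e^{\lambda S_n}] = \prod_i \mathbb{E}[e^{\lambda X_i}]$ for i.i.d.\ summands, and then to leverage the pointwise inequality $(1 + \nu/2)_+^2 \le e^\nu$ from~\eqref{eq:inequality_pinelis}: the Bentkus second-moment surrogate dominates the MGF pointwise, so any quantile obtained from a product-form moment bound dominates the Bentkus quantile.

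The main obstacle I anticipate is in this last step. The hypothesis on $\tilde{q}$ does not a priori restrict it to be of Cram\'er--Chernoff type -- it only asserts probabilistic validity -- so the argument must accommodate arbitrary functions $\tilde{q}$ satisfying the assumption. Bridging the tail bound $\mathbb{P}(\max_t S_t^G \ge u) \le \delta$ and the Chebyshev--Markov inequality $\tilde{P}_{2,n}(u) \le \delta$ is delicate because $\tilde{P}_{2,n}$ is generally strictly larger than the actual tail. To formalize this I would try to construct, from the validity of $\tilde{q}$, an explicit witness $x^\star \le n\tilde{q}(\delta^{1/n}; A, B)$ for which $\mathbb{E}[(S_n^G - x^\star)_+^2]/(n\tilde{q}(\delta^{1/n}; A, B) - x^\star)_+^2 \le \delta$, likely by exploiting the two-point geometry of the $G_i$'s and the optimal shift underlying the definition of $\tilde{P}_{2,n}$.
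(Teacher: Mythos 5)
Your first half is correct and is essentially the paper's own argument: Doob's maximal inequality applied to the submartingale $(S_t-x)_+^2$, domination of $\mathbb{E}[(S_n-x)_+^2]$ by the two-point worst case via~\eqref{eq:intermediate-optimality}, infimum over $x\le u$, and inversion of $\tilde{P}_{2,n}$. No issues there.

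The second half has a genuine gap, which you yourself flag but do not close. Your plan --- specialize the hypothesis to $X_i=G_i$ and then try to deduce $\tilde{P}_{2,n}\bigl(n\tilde{q}(\delta^{1/n};A,B)\bigr)\le\delta$ directly from the tail bound $\mathbb{P}(\max_t S_t^G\ge n\tilde{q}(\delta^{1/n};A,B))\le\delta$ --- cannot work as stated, because $\tilde{P}_{2,n}(u)$ exceeds the true tail $\mathbb{P}(\sum_i G_i\ge u)$ by as much as a factor $e^2/2$ (inequality~\eqref{eq:main-optimality}); knowing the true tail is at most $\delta$ at a point tells you nothing about whether the Chebyshev--Markov surrogate is at most $\delta$ there. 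Your proposed fix of constructing a witness $x^\star$ from the ``two-point geometry'' is not a proof and faces exactly this obstruction. The paper's route is different and hinges on two imported results you do not have. First, the hypothesis is rewritten: validity of $\delta\mapsto n\tilde{q}(\delta^{1/n};A,B)$ for all $\delta$ is equivalent to the existence of $H(\cdot;A,B)$ with $\mathbb{P}(\max_t S_t\ge nu)\le H^n(u;A,B)$ for all $u$ --- i.e.\ the bound is forced to have an $n$-th-power (product) structure. Second, Lemma 4.7 of Bentkus (2004) (equivalently Hoeffding's Eq.~(2.8)) shows that \emph{any} such product-form bound valid over the class~\eqref{eq:boundedness-conditions} must satisfy $H^n(u;A,B)\ge\inf_{h\ge0}e^{-nhu}\mathbb{E}[e^{h\sum_i G_i}]$, i.e.\ it cannot beat the optimal Cram\'er--Chernoff bound at the worst-case two-point law. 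Third, Proposition 3.5 of Pinelis (2009) gives $\inf_{h\ge0}e^{-nhu}\mathbb{E}[e^{h\sum_i G_i}]\ge\tilde{P}_{2,n}(nu)$. Chaining these gives $\tilde{P}_{2,n}(nu)\le H^n(u;A,B)$ for all $u$, and inverting yields $q(\delta;n,\mathcal{A},B)\le n\tilde{q}(\delta^{1/n};A,B)$. The essential idea you are missing is that the comparison is made through the optimal Chernoff bound as an intermediary, exploiting the product structure via the classical Hoeffding--Bentkus lower bound on product-form tail bounds --- not through the actual tail of $\sum_i G_i$.
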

% \begin{proof}
% See Appendix~\ref{appsec:proof-of-theorem-initial-maximal-ineq} for a proof. The proof combines~\cite[Theorem 2.1]{bentkus2006domination} and~\cite[Remark 2.5]{pinelis2006binomial}.
% \end{proof}
\emph{Remark.} The first part of Theorem~\ref{thm:initial-maximal-ineq} provides a finite sample valid estimate of the quantile. The second part implies that it is sharper than classical concentration inequalities such as Hoeffding, Bernstein, Bennett or Prokhorov inequalities. To see this fact, note that $\mathbb{P}(\max_{1\le t\le n}\,S_t \ge n\tilde{q}(\delta^{1/n}; A, B)) \le \delta$ for all $\delta \in[0, 1]$ is equivalent to the existence of a function $H(u; A, B)$ such that $\mathbb{P}(\max_{1\le k\le n}\, S_t \ge nu) \le H^n(u; A, B)$ for all $u$. The classical concentration inequalities mentioned above are all of this product from $H^n(u; A, B)$ for some $H$, hence weaker than our bound.

\subsection{Comparison to Classical bounds}
\begin{figure}[t]
    \centering
    \includegraphics[width=.55\textwidth]{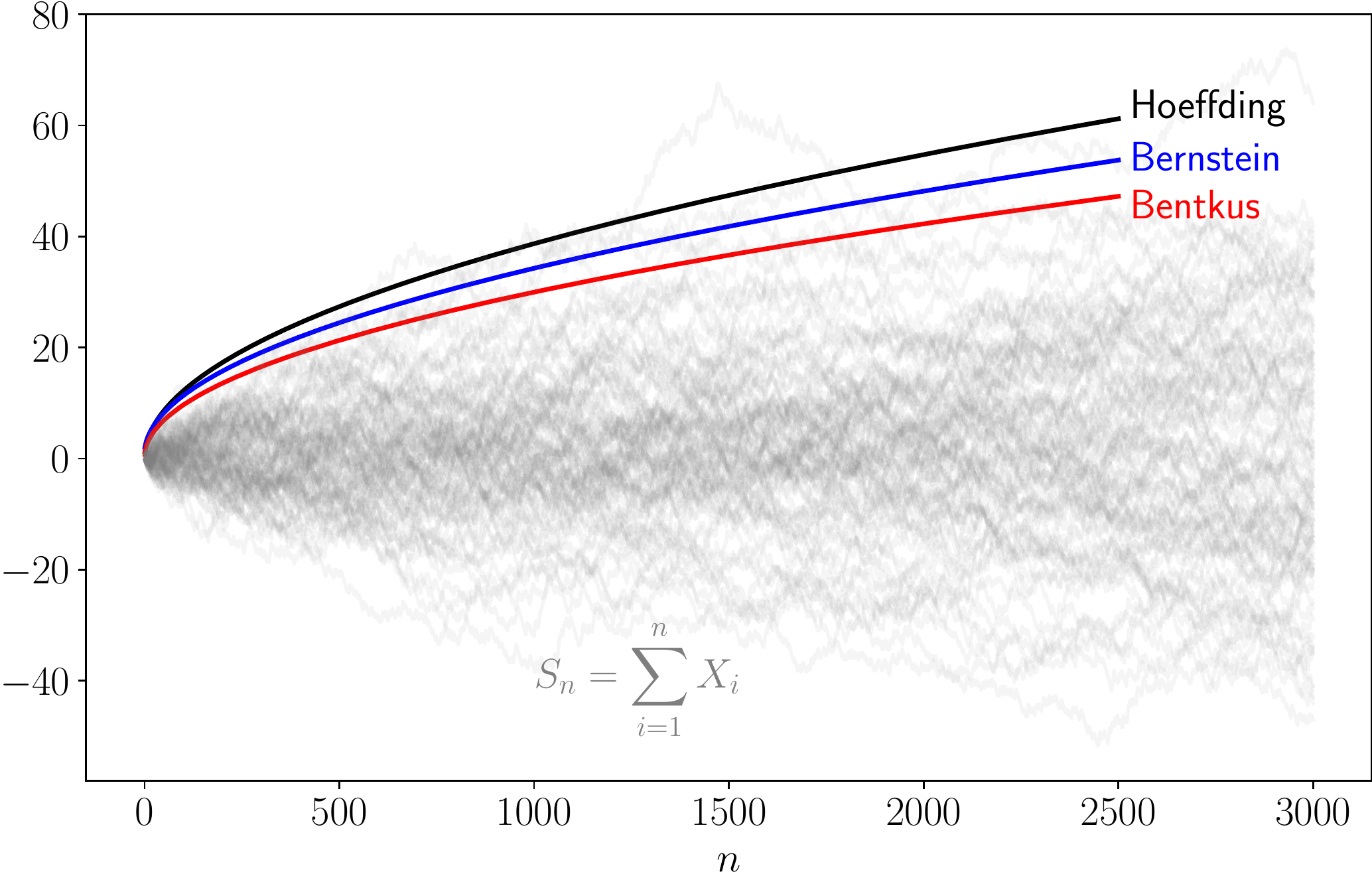}
    % \caption{Pointwise Bounds}
    \caption{
        Comparison of the concentration bounds when $\delta=0.05$.
        $X_i$ are centered i.i.d. $\Bern{\frac14}$. We give the true standard deviation
        $A_i = \frac{\sqrt{3}}{4}$ and upper bound $B = \frac34$ to all the methods.
        The average failure frequencies across 300
        trials and $1 \leq n\le 3000$ are: Hoeffding $0.00205 \pm 0.00261$, Bernstein
        $0.00593 \pm 0.0044$, Bentkus $0.01411 \pm 0.00769$. The Bentkus’ bound is the least conservative. 
    }
     \label{fig:Bernoulli-1/4}
\end{figure}
Most of the classical concentration inequalities including Hoeffding, Bernstein, Bennett, or Prokhorov inequalities~\cite{bentkus2002remark,wellner2017bennett} are derived based on the Cram{\'e}r--Chernoff technique. The Cram{\'e}r--Chernoff technique makes use of exponential moments unlike the positive part second moment used in Bentkus' concentration inequality. We have mentioned in~\eqref{eq:intermediate-optimality} that random variables $G_i$'s defined in~\eqref{eq:G_distribution} is worst case for the positive part second moment. Interestingly, the same random variables are also worst case for exponential moments too, i.e., for all $\lambda \ge 0$,
\[
   \textstyle \mathbb{E}\left[\exp\left(\lambda\sum_{i=1}^n G_i\right)\right] 
=  \displaystyle \sup_{X_i\sim\eqref{eq:boundedness-conditions}}
   \textstyle\mathbb{E}\left[\exp\left(\lambda\sum_{i=1}^n X_i\right)\right].
\]
See~\citet[Page 42]{bennett1962probability} for a proof. Hence, the optimal Cram{\'e}r--Chernoff concentration inequality is given by
\begin{equation}\label{eq:optimal-cramer}
\textstyle\mathbb{P}(\sum_{i=1}^n X_i \ge u) \le \displaystyle\inf_{\lambda \ge 0}
\frac{\mathbb{E}\left[\exp(\lambda\sum_{i=1}^n G_i)\right]}{\exp(\lambda u)}.
\end{equation}
Furthermore, it can be proved that for all $u\in\mathbb{R}$, 
%$\one_{u \geq 0} \leq  \left(1 + \frac{u}{\alpha} \right)^{\alpha}_+ \leq e^u$ and therefore
\begin{equation}\label{eq:comparison-Bentkus-Cramer}
% \inf_{x\le u}\frac{\mathbb{E}[(\sum_{i=1}^n G_i - x)_+^2]}{(u - x)_+^2} 
\tilde{P}_{2,n}(u) \le \inf_{\lambda \ge 0}\frac{\mathbb{E}\left[\exp(\lambda\sum_{i=1}^n G_i)\right]}{\exp(\lambda u)},
\end{equation}
see Eqns~\eqref{eq:inequality_pinelis}--\eqref{eq:pinelis-implication},~\eqref{eq:definition-P2-tilde}. This implies that Bentkus' concentration inequality is sharper than the optimal Cram{\'e}r--Chernoff inequality, and hence sharper than Hoeffding, Bernstein, Bennett, and Prokhorov inequalities. 
Inequality~\eqref{eq:comparison-Bentkus-Cramer} only proves that Bentkus' inequality is an improvement but does not show how significant the improvement is. In order to describe the improvement, let us denote the right hand side of~\eqref{eq:comparison-Bentkus-Cramer} as $\tilde{P}_{\infty,n}(u)$. It can be proved that
\begin{equation}\label{eq:ratio-unbounded}
1~\le~ \lim_{n\to\infty}\sup_{u\in\mathbb{R}}\,\frac{\tilde{P}_{\infty,n}(u)}{\mathbb{P}(\sum_{i=1}^n G_i \ge u)} ~=~ \infty.
\end{equation}
See~\citet[Eq. (1.4)]{talagrand1995missing}. Moreover,\footnote{Because $G_i$'s satisfy assumption~\eqref{eq:boundedness-conditions}, inequality $(i)$ is trivial using~\eqref{eq:bentkus_goal}.
Inequality $(ii)$ holds for all $u$ in the support of $\sum_{i=1}^n G_i$; it holds for all $u\in\mathbb{R}$ if $\mathbb{P}(\sum_{i=1}^n G_i \ge u)$ is replaced by its log-linear interpolation; see~\citet{bentkus2002remark} for details. }
%for all $n\ge1$,% and $x\in\mathbb{R}$,
\begin{equation}\label{eq:main-optimality}
% \begin{aligned}
\hspace{-0.5pt}
\mathbb{P}\big(\sum_{i=1}^n G_i \ge u\big) \stackrel{(i)}{\le}\tilde{P}_{2,n}(u) \stackrel{(ii)}{\le}
\frac{e^2}{2} \mathbb{P}\big(\sum_{i=1}^n G_i \ge u\big).
% \end{aligned}
\end{equation}
Inequalities in~\eqref{eq:main-optimality} show that our concentration inequalities based on the two-point random variables $G_i$ are sharp up to a constant factor $e^2/2$. Further, inequalities \eqref{eq:ratio-unbounded} and \eqref{eq:main-optimality} show that there exists a distribution for which Bentkus' inequality can be infinitely better than the optimal Cram{\'e}r--Chernoff bound.
See Figure~\ref{fig:Bernoulli-1/4} for an illustration and~\citet{bentkus2002remark,bentkus2004hoeffding, pinelis2006binomial} for further discussion.

\subsection{Computation of Bentkus' bound}
\begin{figure}[t]
    \centering
    \includegraphics[width=0.7\textwidth]{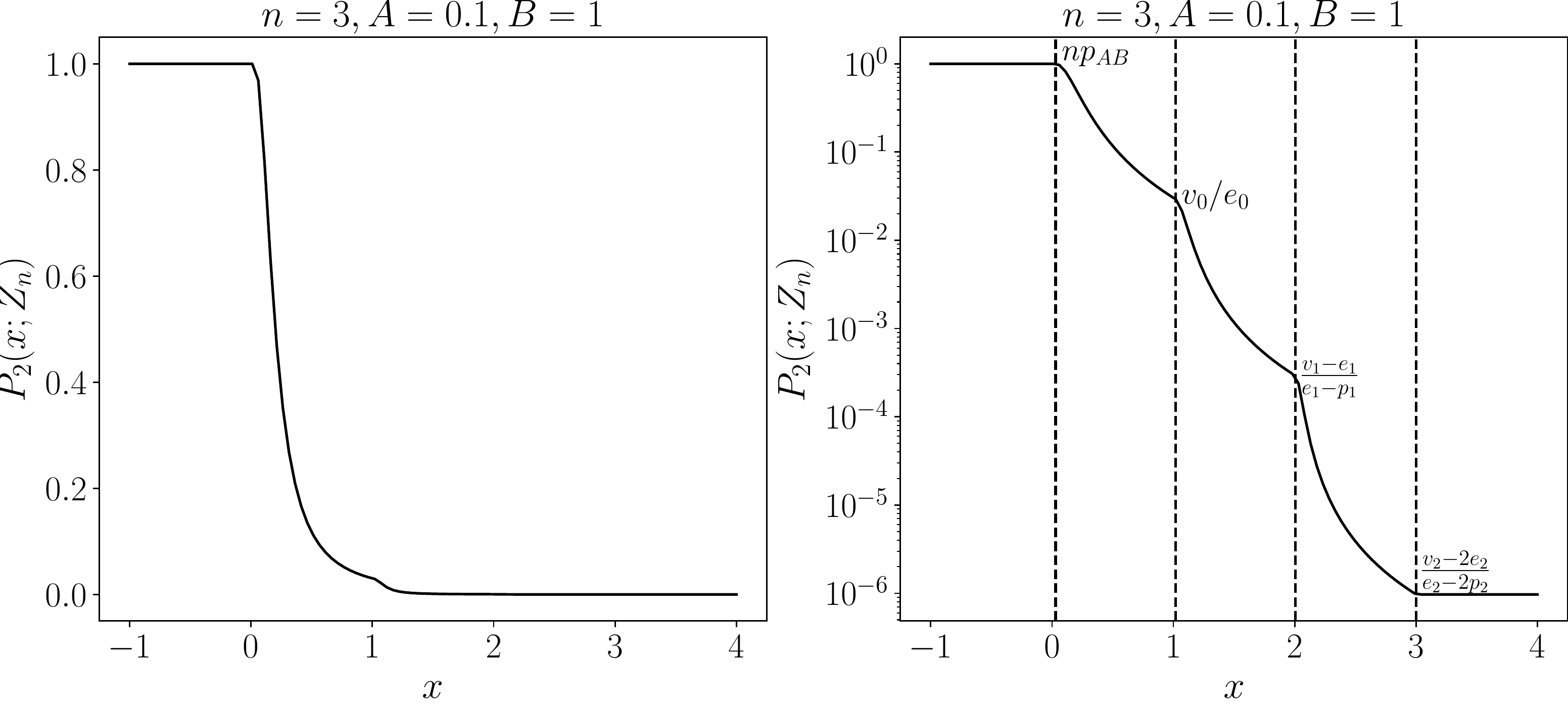}
    \caption{Examples function $P_2(x; Z_n)$ when $n=3$, $A=0.1$ and
        $B=1.0$. We plot $P_2(x;Z_n)$ in both linear (left) and log (right) scales on the y-axis.}
    \label{fig:function_P2}
\end{figure}
Computation of $q(\cdot; n, \mathcal{A}, B)$ is discussed in~\citet[Section 9]{bentkus2006domination} and we provide a detailed discussion in Appendix~\ref{appsec:computation-of-q-function}. In this respect, the following result describes the function in~\eqref{eq:definition-P2-tilde} as a piecewise smooth function in homoscedastic case, i.e., $A_1 = \ldots = A_n = A$.
\begin{prop}\label{prop:computation-P_2}
Set $\pab = A^2/(A^2 + B^2)$ and $Z_n = \sum_{i=1}^nR_i$ where $R_i\sim\Bern{\pab}$. Then for $u\in\mathbb{R}$,%\vspace{-0.1in}
\begin{equation}%\vspace{-0.1in}
    \tilde{P}_{2,n}(u) ~=~ P_2\left(n\pab + u(1 - \pab)/B
    ;\, Z_n \right),
\end{equation}
where $P_2(x; Z_n) = 1$ for $x \le n\pab$ and
%  for any $x\ge n\pab$ and $1\le k \le n-1$,
\begin{align*}
&P_2\left(x; Z_n\right) :=\\ &
\begin{dcases}
    % 1,  &   \mbox{if }x ~\le~ n \pab,\\
    \frac{n\pab(1 - \pab)}{(x - n\pab)^2 + n\pab(1 - \pab)}, &\mbox{if }n \pab < x \le \Psi_0,\\
    \frac{v_{k}p_{k} - e_{k}^2}{x^2p_{k} - 2xe_{k} + v_{k}}, &\mbox{if }\Psi_{k-1} < x \le \Psi_k,\\
    \mathbb{P}\left(Z_n = n\right) = \pab^n, &\mbox{if }x~\ge~ \Psi_{n-1} = n.
\end{dcases}
\end{align*}
Here $p_k = \mathbb{P}(Z_n \ge k), e_k = \mathbb{E}[Z_n\mathds{1}\{Z_n \ge k\}]$, $v_k = \mathbb{E}[Z_n^2\mathds{1}\{Z_n \ge k\}]$, and $\Psi_k = {(v_k - ke_k)}/{(e_{k} - kp_k)}$.
\end{prop}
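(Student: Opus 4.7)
My plan has two parts: (i) a linear change of variables that reduces $\tilde P_{2,n}$ to an infimum involving the integer-valued sum $Z_n$, and (ii) a piecewise analysis indexed by the integer interval containing the optimal $y$.

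For (i), write $G_i = \frac{A^2+B^2}{B} R_i - \frac{A^2}{B}$ with $R_i := \mathds{1}\{G_i = B\} \sim \Bern{\pab}$, so $\sum_{i=1}^n G_i = \frac{A^2+B^2}{B} Z_n - \frac{nA^2}{B}$. Substituting $x = \frac{A^2+B^2}{B} y - \frac{nA^2}{B}$ in the infimum defining $\tilde P_{2,n}(u)$ pulls the common positive factor $\frac{A^2+B^2}{B}$ out of both $\sum_i G_i - x$ and $u-x$; it cancels in the ratio, and together with the bijection $x \le u \Leftrightarrow y \le \tilde u := n\pab + u(1-\pab)/B$, this gives
\[
\tilde P_{2,n}(u) \;=\; \inf_{y \le \tilde u} \frac{\E[(Z_n-y)_+^2]}{(\tilde u-y)^2} \;=:\; P_2(\tilde u;Z_n).
\]
It remains to compute $P_2(x;Z_n)$.

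For (ii), I split into regimes. If $x \le n\pab = \E[Z_n]$, Jensen's inequality gives $\E[(Z_n - y)_+^2] \ge (n\pab - y)_+^2$, so the ratio is $\ge 1$ for $y \le x \le n\pab$; since it tends to $1$ as $y \to -\infty$, $P_2(x;Z_n) = 1$. If $x > n\pab$, fix $k \in \{0, 1, \ldots, n-1\}$ and consider $y$ in the integer interval $[k-1, k)$ (interpreting $k=0$ as $y < 0$). On this interval $\E[(Z_n - y)_+] = e_k - y p_k$ and $\E[(Z_n - y)_+^2] = v_k - 2y e_k + y^2 p_k$, since only $\{Z_n \ge k\}$ contributes. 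Differentiating $f(y) := \E[(Z_n - y)_+^2]/(x-y)^2$ and simplifying yields the first-order condition $\E[(Z_n - y)_+^2] = (x-y)\E[(Z_n - y)_+]$, a linear equation in $y$ with unique root $y_k^*(x) = (v_k - xe_k)/(e_k - xp_k)$. Substituting back and using the FOC to replace $\E[(Z_n - y_k^*)_+^2]$ by $(x - y_k^*)\E[(Z_n - y_k^*)_+]$ produces the closed form
\[
f(y_k^*(x)) \;=\; \frac{v_k p_k - e_k^2}{x^2 p_k - 2xe_k + v_k}.
\]
The $k=0$ case, with $p_0 = 1$, $e_0 = n\pab$, $v_0 = n\pab(1-\pab) + n^2\pab^2$, recovers the first formula in the proposition.

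The last step is identifying the transition points $\Psi_k$ and confirming global optimality. The identities $v_k = v_{k+1} + k^2 \P(Z_n = k)$, $e_k = e_{k+1} + k\P(Z_n = k)$, $p_k = p_{k+1} + \P(Z_n = k)$ give $v_k - ke_k = v_{k+1} - ke_{k+1}$ and $e_k - kp_k = e_{k+1} - kp_{k+1}$, so that $y_k^*(x) = k$ holds at the same value $x = \Psi_k$ whether viewed from piece $k$ or piece $k+1$; hence the piecewise optimizer (and thus $f$) is continuous across transitions. A direct computation gives $dy_k^*/dx = (v_k p_k - e_k^2)/(e_k - xp_k)^2 \ge 0$ by Cauchy--Schwarz applied to $Z_n$ on $\{Z_n \ge k\}$, so $y_k^*$ is monotone in $x$; therefore the minimizer lies in $[k-1, k)$ exactly when $x \in (\Psi_{k-1}, \Psi_k]$, with the convention $\Psi_{-1} := n\pab$. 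At $x = \Psi_{n-1}$, explicit simplification using $\P(Z_n = n-1) = n\pab^{n-1}(1-\pab)$ and $\P(Z_n = n) = \pab^n$ gives $\Psi_{n-1} = n$ and value $\pab^n$. The main obstacle is showing the single-piece FOC value is the \emph{global} infimum of $f$; this reduces to the monotonicity of $y_k^*(x)$ and the continuous pasting at each $\Psi_k$, after which the rest is algebraic manipulation.
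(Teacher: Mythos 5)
Your overall route is the same as the paper's: reduce $\tilde{P}_{2,n}$ to $P_2(\cdot;Z_n)$ by the affine change of variables, then compute the infimum piecewise over integer intervals via the first-order condition, obtaining the critical point $(v_k-xe_k)/(e_k-xp_k)$ and the closed form $(v_kp_k-e_k^2)/(x^2p_k-2xe_k+v_k)$. All of that algebra — the $k=0$ specialization, the pasting identities at $\Psi_k$, and the value $\pab^n$ at $x=n$ — checks out. The difference is that the paper outsources the structural facts to Proposition 3.2 of Pinelis (2009), whereas you attempt a self-contained derivation, and there is one genuine gap in it.

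The gap is exactly the step you flag and then claim to have reduced away. Knowing that the FOC root $y_k^*(x)$ on piece $k$ is unique, that the pieces paste continuously at $\Psi_k$, and that $x\mapsto y_k^*(x)$ is nondecreasing does \emph{not} imply that $f(y)=\E[(Z_n-y)_+^2]/(x-y)^2$ attains its global infimum over $y\le x$ at that root: monotonicity in $x$ only locates the stationary point, and says nothing about the behavior of $f$ on the other pieces (where the stationary point of the local rational formula lies outside the interval), nor does it rule out the infimum being approached as $y\to-\infty$ or near $y=x$. What is actually needed is unimodality of $f$ in $y$, i.e., that $f'$ changes sign exactly once. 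This follows from the identity $f'(y)=\tfrac{2}{(x-y)^3}\,\E[(Z_n-y)_+]\,(F(y)-x)$ with $F(y):=\E[Z_n(Z_n-y)_+]/\E[(Z_n-y)_+]$, together with the fact that $F$ is continuous and nondecreasing on $(-\infty,n)$; this is precisely the content of Pinelis's Proposition 3.2(i)--(iii) that the paper cites. The good news is that your own Cauchy--Schwarz computation closes the gap once it is pointed at the right object: on piece $k$ one has $F(y)=(v_k-ye_k)/(e_k-yp_k)$ with $F'(y)=(v_kp_k-e_k^2)/(e_k-yp_k)^2\ge 0$ — the identical expression you derived for $dy_k^*/dx$ — and your pasting identities give continuity of $F$ across the integers, so $F$ is globally nondecreasing and $f$ is unimodal. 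With that substitution the argument is complete; the remaining items ($x\le n\pab$ via Jensen, and the $x\ge n$ boundary, where the stated value $\pab^n$ is really the value at $x=n$ — the paper itself notes $P_2(x;Z_n)=0$ for $x>n$) are fine as you have them.
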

The function $P_2(\cdot; Z_n)$ is illustrated in Figure~\ref{fig:function_P2} for $n = 3$ in both linear and logarithmic scale.
% Based on the description in 
Using
Proposition~\ref{prop:computation-P_2} and~\eqref{eq:quantile-A-B-2nd-version}, computation of $q(\cdot; n, A, B)$ follows. In Appendix~\ref{subsec:computation-of-quantile}, we also provide a similar piecewise description of $q(\cdot; n, A, B)$. It is worth pointing out that a similar expression for $P_2(\cdot; Z_n)$ can be derived when $A_i$'s are unequal. % (heteroscedastic case). 
Proposition~\ref{prop:computation-P_2} is stated for equal variances for simplicity and also because of the widely used i.i.d. assumption.%; see Remark~\ref{rem:computation-of-quantile}.
% for details.%provides a more detailed discussion on computation. %Writing $\sum_{i=1}^n G_i$ as a linear function of a $\mathrm{Binomial}(n, A^2/(A^2 + B^2))$, one can get an analytical upper bound for $q(\delta; n, \mathcal{A}, B)$ based on~\eqref{eq:main-optimality} and using the results of~\citet{zubkov2012complete}.
% Figure~\ref{fig:Bernoulli-1/4} provides an illustration as $n$ increases for the case $A_i = \sqrt{3}/4$, $B = 3/4$, and $\delta = 0.05$.
    % \includegraphics[width=0.48\textwidth]{figure/pointwise_G_n2500.pdf}

\subsection{Adaptive Bentkus' Concentration Inequality with Known Variance}
\begin{figure}[t]
    \centering
    \includegraphics[width=.55\textwidth]{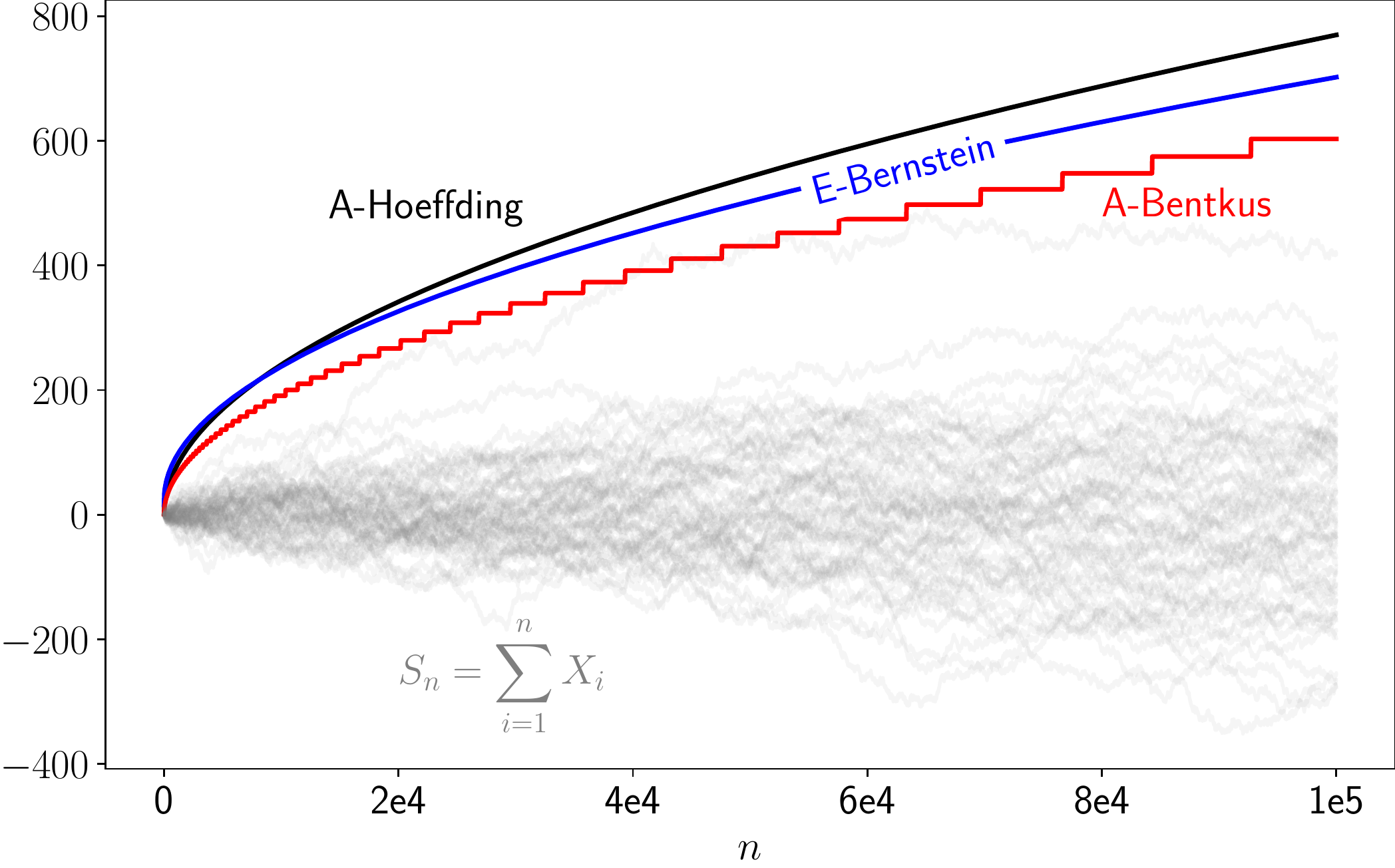}
    \caption{
        Comparison of the uniform concentration bounds when $\delta=0.05$.
        $X_i$ are centered i.i.d. $\Bern{\frac14}$. True standard deviation
        $A_i = {\sqrt{3}}/{4}$ and upper bound $B = 3/4$ are provided to all the methods.
        A-Bentkus is computed using $\eta = 1.1$, $h(k) =(k+1)^{1.1}
        \zeta(1.1)$. For 3000 trials, there is zero failure for Adaptive Hoeffding
        and Empirical Bernstein, but $3$ for A-Bentkus~\eqref{eq:adaptive-Bentkus}.
        All the bounds have failure frequency bounded above by $\delta$ but
        the Bentkus' bound is the least conservative. The differences between the bounds continue to grow as $n$ increases.
    }
    \label{fig:Bernoulli-1/4-unif-n}
\end{figure}

Although Theorem~\ref{thm:initial-maximal-ineq} leads to a uniform in sample size confidence sequence until size $n$, it is very wide for sample sizes much smaller than $n$. We now use the method of stitching to obtain a confidence sequence that is valid for all sample sizes and scales reasonably well with respect to the sample size. See~\citet[Section 3.2]{mnih2008empirical} and~\citet[Section 3.1]{howard2018uniform} for other applications.
The stitching method requires two user-chosen parameters:
\begin{enumerate}[topsep=0pt]%\itemsep0em
  \item[-] a scalar $\eta > 1$ that determines the geometric spacing.
  \item[-] a function $h:\mathbb{R}_+\to\mathbb{R}_+$ such that $\sum_{k=0}^{\infty} {1}/{h(k)} \le 1$. Ideally, $1/h(k), k\ge0$ adds up to $1$.
\end{enumerate}
The following result
(proved in Appendix~\ref{appsec:proof-of-theorem-uniform-in-n})
gives a uniform over $n$ tail inequality by splitting $\{n\ge1\}$ into $\bigcup_{k\ge1}\{\lceil\eta^k\rceil \le n \le \lfloor\eta^{k+1}\rfloor\}$ and then applying~\eqref{eq:maximal-inequality} within $\{\lceil\eta^k\rceil \le n\le \lfloor\eta^{k+1}\rfloor\}$.
For each $n\ge1$, set $k_n := \min\{k\ge0:\,\lceil\eta^k\rceil \le n \le \lfloor\eta^{k+1}\rfloor\}$ and $c_n := \lfloor\eta^{k_n + 1}\rfloor$.
\begin{thm}\label{thm:uniform-in-n}
If $X_1, X_2, \ldots$ are independent random variables satisfying~\eqref{eq:boundedness-conditions}, then for any $\delta\in[0,1]$,
\begin{equation}\label{eq:adaptive-Bentkus}%\textstyle
\mathbb{P}\left(\exists\, n \ge 1:\,S_n\ge q\left(\frac{\delta}{h(k_n)}; c_n, \mathcal{A}, B\right)\right) \le \delta.%,
\end{equation}
% where $k_n := \min\{k\ge0:\,\lceil\eta^k\rceil \le n \le \lfloor\eta^{k+1}\rfloor\}$ and $c_n := \lfloor\eta^{k_n + 1}\rfloor$.
\end{thm}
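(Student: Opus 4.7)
The plan is to execute the stitching argument that is sketched in the paragraph immediately preceding the statement. The idea is to cover $\{n\ge 1\}$ by the geometric blocks
\[
\mathcal{B}_k := \{n \in \mathbb{Z}_{+} : \lceil \eta^k \rceil \le n \le \lfloor \eta^{k+1} \rfloor\}, \quad k \ge 0,
\]
apply the maximal Bentkus inequality of Theorem~\ref{thm:initial-maximal-ineq} at the right endpoint of each block at the deflated level $\delta/h(k)$, and then union-bound across $k$, so that the summability condition $\sum_{k \ge 0} 1/h(k) \le 1$ absorbs the Bonferroni penalty and leaves an overall failure probability of at most $\delta$.

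Concretely, I would first decompose the failure event as
\[
\{\exists n \ge 1 : S_n \ge q(\delta/h(k_n); c_n, \mathcal{A}, B)\} = \bigcup_{k \ge 0} \{\exists\, n \text{ with } k_n = k :\, S_n \ge q(\delta/h(k); c_n, \mathcal{A}, B)\}.
\]
By the definitions of $k_n$ and $c_n$ given just before the theorem, the condition $k_n = k$ forces $c_n = \lfloor \eta^{k+1}\rfloor$ and $n \le c_n$; in particular the quantile argument is constant on $\{n : k_n = k\}$. Consequently the $k$th event in the union is contained in
\[
\Big\{\max_{1 \le t \le \lfloor \eta^{k+1}\rfloor} S_t \;\ge\; q\big(\delta/h(k);\, \lfloor \eta^{k+1}\rfloor,\, \mathcal{A}, B\big)\Big\},
\]
since $S_n \le \max_{1\le t \le c_n} S_t$ whenever $n \le c_n$.

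Next, I would apply Theorem~\ref{thm:initial-maximal-ineq} with sample size $\lfloor \eta^{k+1}\rfloor$ and confidence level $\delta/h(k)$ to bound the probability of each such block event by $\delta/h(k)$. A final union bound over $k \ge 0$ then yields
\[
\mathbb{P}\big(\exists\, n \ge 1 : S_n \ge q(\delta/h(k_n); c_n, \mathcal{A}, B)\big) \;\le\; \sum_{k \ge 0} \frac{\delta}{h(k)} \;\le\; \delta,
\]
where the last inequality invokes the defining property of $h$.

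The proof is therefore essentially bookkeeping. The only subtle point, and the reason the ``maximal'' upgrade in Theorem~\ref{thm:initial-maximal-ineq} is indispensable rather than ornamental, is that a single invocation of the maximal inequality on each block simultaneously controls every $S_n$ with $n \in \mathcal{B}_k$. Without the running-maximum version one would need an additional union bound over the roughly $\eta^{k+1} - \eta^k$ indices inside the block, which would largely undo the sharpness of the underlying fixed-sample-size Bentkus bound.
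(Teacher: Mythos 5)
Your proposal is correct and follows essentially the same route as the paper's own proof in Appendix~\ref{appsec:proof-of-theorem-uniform-in-n}: decompose $\{n\ge1\}$ into the geometric blocks on which $k_n$ and $c_n$ are constant, control each block by a single application of the maximal inequality~\eqref{eq:maximal-inequality} at level $\delta/h(k)$ with sample size $\lfloor\eta^{k+1}\rfloor$, and union-bound over $k$ using $\sum_{k\ge0}1/h(k)\le1$. Your closing remark about why the maximal (running-maximum) form of Theorem~\ref{thm:initial-maximal-ineq} is essential, rather than a per-index union bound within each block, is exactly the right observation.
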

% \qq{comment on $A^2 \leq (U - \mu)(\mu - L)$}

The choice of the spacing parameter $\eta$ and stitching function $h(\cdot)$ determine
the shape of the confidence sequence and there is no universally optimal setting. The
growth rate of $h(\cdot)$ determines how the budget of $\delta$ is spent over sample
sizes; a quickly growing $h(\cdot)$ such as $2^k$ yield confidence intervals of
essentially $100\%$ confidence for larger sample sizes. The choice of $\eta$ determines
how conservative the bound is for small $n$ in $\{\lceil\eta^k\rceil \le n\le
\lfloor\eta^{k+1}\rfloor\}$; for $\eta$ too large the bound will be conservative for $n$
close to $\eta^k$. Eq.~\eqref{eq:main-optimality} shows that bound is tightest at $n =
\lfloor\eta^{k+1}\rfloor$ in each epoch. See Appendix~\ref{appsec:hyperparam_stitch} for the graphical
illustration. Throughout this paper, we use
$\eta = 1.1$ and $h(k) = \zeta(1.1)(k+1)^{1.1}$ where $\zeta(\cdot)$ is the Riemann zeta function.

The same stitching method used in Theorem~\ref{thm:uniform-in-n} can also be used with Hoeffding and Bernstein inequalities as done in~\citet{zhao2016adaptive} and~\citet{audibert2009exploration}, respectively. However, given that inequality~\eqref{eq:maximal-inequality} is sharper than Hoeffding and Bernstein inequalities, our bound~\eqref{eq:adaptive-Bentkus} is sharper for the same spacing parameter $\eta$ and stitching function $h(\cdot)$; see Figure~\ref{fig:Bernoulli-1/4-unif-n}. Stitched bounds as in Theorem~\ref{thm:uniform-in-n} are always piecewise constant but the Hoeffding and Bernstein versions from~\citet{zhao2016adaptive} and~\citet{mnih2008empirical} are smooth because they are upper bounds of the piecewise constant boundaries (obtained using $n \le c_n \le \eta n$ and $k_n \le \log_{\eta} n + 1$). For practical use, smoothness is immaterial and the piecewise constant versions are sharper.

\subsection{Adaptive Bentkus Confidence Sequence with Estimated Variance}\label{sec:empirical-Bentkus}
Theorem~\ref{thm:uniform-in-n} is impractical in its form because it involves the unknown sequence of $A_1, A_2, \ldots$. In the case where $A_1 = A_2 = \cdots = A$, one needs to generate an upper bound of $A$ (for a known $B$) and obtain an actionable version of Theorem~\ref{thm:uniform-in-n}. Finite-sample over-estimation of $A$ requires a two-sided bound on the $X_i$'s; one-sided bounds on the random variables do not suffice. This actionable version is a refined version of empirical Bernstein inequality that is uniform over the sample sizes.

We will assume that $\mathbb{P}(\underline{B} \le X_i \le B) = 1,\,\forall\,i$. It follows that $\mbox{Var}(X_i) = A^2 \le -B\underline{B}$~\cite{bhatia2000better}. Because $X_i$'s have mean zero, $\underline{B} \le 0$ and $B \ge 0$; this implies that $-\underline{B}B \ge 0$. If one wants to avoid variance estimation, then one can use this upper bound in Theorem~\ref{thm:uniform-in-n} to obtain an actionable confidence sequence. This sequence, however, will not have width scaling with the true variance. 

Define $\widebar{A}_1(\delta) = (B - \underline{B})/2$ and for $n\ge2, \delta\in[0, 1]$
\begin{equation}\label{eq:variance-over-estimation}
\begin{split}
\widehat{A}_n^2 &:= \textstyle\lfloor n/2\rfloor^{-1}\sum_{i=1}^{\lfloor n/2\rfloor} {(X_{2i} - X_{2i - 1})^2}/{2},\\ 
\widebar{A}_n(\delta) &:= \sqrt{\widehat{A}_n^2 + g_{2,n}^2(\delta)} + {g_{2,n}(\delta)},
\end{split}
\end{equation}
where $g_{2,n}(\delta) := (2\sqrt{2}n)^{-1}\sqrt{\lfloor c_n/2\rfloor}(B - \underline{B})\times\Phi^{-1}\left(1 - {2\delta}/{(e^2h(k_n))}\right)$, for the distribution function $\Phi(\cdot)$ of a standard normal random variable.
We will write $\widebar{A}_n(\delta; B, \underline{B})$, when needed, to stress the dependence of $\widebar{A}_n(\delta)$ on $B, \underline{B}$.
Lemma~\ref{lem:uniform-in-n-variance} shows that $\widebar{A}_n(\delta)$ is a valid over-estimate of $A$ uniformly over $n$ and yields the following actionable bound. We defer the proof to Appendix~\ref{appsec:proof-of-lemma-variance-estimation}.

\begin{thm}\label{thm:uniform-in-n-empirical-Bentkus}
If $X_1, X_2, \ldots$ are mean-zero independent random variables satisfying $\mathrm{Var}(X_i) = A^2$ and $\mathbb{P}(\underline{B} \le X_i \le B) = 1$ for all $i\ge1$, then for any $\delta_1, \delta_2\in[0, 1]$, with probability at least $1 - \delta_1 - \delta_2$, simultaneously for all $n\ge1$,
\[
S_n \le q\left(\frac{\delta_1}{h(k_n)}; c_n, \widebar{A}_n^*(\delta_2), B\right)
\mbox{ and } A \le \widebar{A}_n^*(\delta_2, B, \underline{B}).
\]
Similarly, with probability at least $1-\delta_1 - \delta_2$, simultaneously for all $n\ge1$,
\[
S_n \ge -q\left(\frac{\delta_1}{h(k_n)}; c_n, \widebar{A}_n^*(\delta_2), -\underline{B}\right)
\mbox{ and } A \le \widebar{A}_n^*(\delta_2, B, \underline{B}).
\]
Here $\widebar{A}_n^*(\delta_2) := \min_{1\le s\le n}\,\widebar{A}_n(\delta_2, B, \underline{B})$, and $k_n, c_n$ are those defined before Theorem~\ref{thm:uniform-in-n}.
\end{thm}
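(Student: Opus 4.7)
The plan is to decompose the two claims of the theorem into a ``good variance estimation'' event of probability at least $1-\delta_2$ and a ``Bentkus concentration'' event of probability at least $1-\delta_1$, then combine them by a union bound. Concretely, let $\mathcal{E}_2 = \{A \le \widebar{A}_n^*(\delta_2, B,\underline{B}) \text{ for every } n\ge1\}$ and let $\mathcal{E}_1$ be the event on which Theorem~\ref{thm:uniform-in-n} holds, i.e.\ $S_n \le q(\delta_1/h(k_n); c_n, \mathcal{A}, B)$ for all $n$ (here $\mathcal{A}=\{A,A,\dots\}$). On $\mathcal{E}_1\cap\mathcal{E}_2$ both stated inequalities will follow at once, provided one can replace the true $A$ inside the quantile $q$ with any upper bound.

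First I would establish $\mathcal{E}_2$. Define $W_i := (X_{2i} - X_{2i-1})^2/2$, which is i.i.d., non-negative, bounded above by $(B-\underline{B})^2/2$, and has mean exactly $A^2$ so that $\widehat{A}_n^2 = \lfloor n/2\rfloor^{-1}\sum_{i=1}^{\lfloor n/2\rfloor} W_i$ is unbiased for $A^2$. To get a one-sided deviation of the form $A^2 - \widehat{A}_n^2$, I would invoke the Pinelis~(2016) one-sided Gaussian-type concentration for sums of non-negative bounded variables, which yields a tail of the shape $(e^2/2)\,\bar\Phi(\cdot)$ (this is the refined analogue of the Pinelis--Bentkus bound for the lower tail of non-negative sums, and is where the $e^2$ factor in the definition of $g_{2,n}(\delta)$ enters). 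Next, I would stitch this fixed-sample-size inequality over $n$ exactly as in the proof of Theorem~\ref{thm:uniform-in-n}, using the same $\eta$ and $h(\cdot)$: partitioning $\{n\ge1\}$ into epochs $\lceil\eta^k\rceil\le n\le\lfloor\eta^{k+1}\rfloor$, spending $\delta_2/h(k)$ per epoch, and replacing $n$ inside each epoch by the epoch's upper endpoint $c_n$. This produces a uniform-in-$n$ inequality $A^2 - \widehat{A}_n^2 \le 2A\, g_{2,n}(\delta_2)$. Solving this quadratic in $A$ and completing the square yields $A \le \sqrt{\widehat{A}_n^2 + g_{2,n}^2(\delta_2)} + g_{2,n}(\delta_2) = \widebar{A}_n(\delta_2)$. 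Since this holds for every $n$ simultaneously, it also holds with $\widebar{A}_n(\delta_2)$ replaced by $\widebar{A}_n^*(\delta_2) = \min_{s\le n}\widebar{A}_s(\delta_2)$, which proves $\mathbb{P}(\mathcal{E}_2)\ge 1-\delta_2$.

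On $\mathcal{E}_1\cap\mathcal{E}_2$, the only remaining fact is monotonicity of $q(\delta; n, \mathcal{A}, B)$ in the variance argument. This is immediate from the worst-case characterization~\eqref{eq:intermediate-optimality} and definition~\eqref{eq:definition-P2-tilde}: increasing $A$ stochastically dominates the two-point distribution~\eqref{eq:G_distribution} in the sense that it enlarges $\mathbb{E}[(\sum_i G_i - x)_+^2]$, which increases $\tilde{P}_{2,n}(u)$ and therefore pushes its inverse $q$ upward. Hence on $\mathcal{E}_2$, $q(\delta_1/h(k_n); c_n, \mathcal{A}, B) \le q(\delta_1/h(k_n); c_n, \widebar{A}_n^*(\delta_2), B)$ for every $n$, and combining with $\mathcal{E}_1$ and a union bound gives the first conclusion with probability $\ge 1-\delta_1-\delta_2$. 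The lower-bound variant is obtained by repeating the whole argument with $X_i$ replaced by $-X_i$, which swaps the roles of $B$ and $-\underline{B}$ while leaving both $\widehat{A}_n^2$ and $\widebar{A}_n^*(\delta_2, B,\underline{B})$ unchanged.

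The main obstacle is the variance-estimation step: getting the exact closed form $g_{2,n}(\delta)$ out of a Pinelis-type one-sided bound requires that the non-negative-sum concentration deliver precisely a $(e^2/2)\bar\Phi$-shaped tail (so that the inverse CDF $\Phi^{-1}(1 - 2\delta/(e^2 h(k_n)))$ appears naturally after stitching) and then a clean quadratic inversion to produce the square-root expression for $\widebar{A}_n(\delta)$. By contrast, the monotonicity of $q$ in $A$ and the final union bound are essentially bookkeeping once this lemma is in hand.
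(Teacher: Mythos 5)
Your proposal is correct and follows essentially the same route as the paper: the same $(\delta_1,\delta_2)$ decomposition into a Bentkus concentration event (Theorem~\ref{thm:uniform-in-n}) and a uniform variance over-estimation event (the paper's Lemma~\ref{lem:uniform-in-n-variance}, built from the paired differences $W_i=(X_{2i}-X_{2i-1})^2/2$, the Pinelis one-sided bound with its $e^2/2$ factor, stitching over epochs, and the quadratic inversion yielding $\widebar{A}_n(\delta)$), combined via a union bound together with monotonicity of $q$ in the variance argument (the paper's Lemma~\ref{lem:non-decreasing-quantile}). The one step you gloss over is that stitching the variance bound requires a maximal-over-$t$ inequality within each epoch, not merely a fixed-sample-size one; the paper obtains this by a Doob submartingale argument applied to $(u - \{V_{2t} - tA^2\})_+^2$ before invoking the Pinelis result.
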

Theorem~\ref{thm:uniform-in-n-empirical-Bentkus} is an analogue of the empirical Bernstein inequality~\citet[Eq. (5)]{mnih2008empirical}. The over-estimate of $A$ in~\eqref{eq:variance-over-estimation} can be improved by using non-analytic expressions, but we present the version above for simplicity; see Appendix~\ref{appsec:proof-of-lemma-variance-estimation} for details on how to improve $\widebar{A}_n(\delta)$ in~\eqref{eq:variance-over-estimation}.

Theorem~\ref{thm:uniform-in-n-empirical-Bentkus} can be used to construct a confidence
sequence as follows. Suppose $Y_1, Y_2, \ldots$ are independent random variables with
mean $\mu$, variance $A^2$, and satisfying $\mathbb{P}(L \le Y_i \le U) = 1$.
Then $X_i = Y_i - \mu$ is a zero mean random variable where $\P(L - \mu \leq X_i \leq U
- \mu) = 1$, and Theorem~\ref{thm:uniform-in-n-empirical-Bentkus} is directly applicable
with $B = -\underline{B} = U - L$.  An interesting observation is that we can refine the values of $\underline{B}$ and $B$
while we are updating the confidence interval for $\mu$. Suppose 
% we have a valid upper and lower bound when we observed
with $n$ data points, we have:
$-q_n^\low \leq n\Ybar_n - n \mu  \leq q_n^\up$,
then
% this implies
\[\textstyle
    \mu^\low_n := \Ybar_n  - n^{-1}{q_n^\up}  ~\leq~  \mu  ~\leq~ \Ybar_n  +
    n^{-1}{q_n^\low} =: \mu_n^\up,
\]
where $\Ybar_n$ is the empirical mean of $Y$. We thus have a valid estimate $[L - \mu_n^{\up}, U - \mu_n^{\low}]$ of the support of $X$,
% \[\textstyle
%     L - \mu_n^\up  ~\leq~ Y - \mu  ~\leq~ U - \mu_n^\low,
% \]
and when we observe $Y_{n+1}$, we can use
$U - \mu_n^\low$ as $B$ and $L - \mu_n^\up$ as $\underline{B}$.
Importantly, as Theorem~\ref{thm:uniform-in-n-empirical-Bentkus} provides
a uniform concentration bound,
these recursively defined upper and lower bounds hold simultaneously too. This
leads to the following result,
proved in Appendix~\ref{appsec:proof-empirical-bentkus-arbitrary-mean}.

\begin{thm}\label{thm:empirical-Bentkus-arbitrary-mean}
If random variables $Y_1, Y_2, \ldots$ are independent with mean $\mu$, variance $A^2$
and satisfy $\mathbb{P}(L \le Y_i \le U) = 1$.
Define $\mu_0^\up := U$, $\mu_0^\low := L$, and for $n \geq 1$
\begin{equation}
    \begin{aligned}
        \mu_n^\up &= \Ybar_n + \frac{1}{n} q \left(\frac{\delta_1}{2h(k_n)}; c_n, \widebar{A}^*_n(\delta_2, U , L ),\, \mu_{n-1}^\up-L \right) \\
        \mu_n^\low  &= \Ybar_n -\frac{1}{n} q\left(\frac{\delta_1}{2h(k_n)}; c_n, \widebar{A}^*_n(\delta_2, U, L),\,  U - \mu_{n-1}^\low  \right)
    \end{aligned}
\end{equation}
Let $\mu_n^{\up*} = \min_{0\leq i \leq n} \mu_i^\up$ and $\mu_n^{\low*} = \max_{0\leq i \leq n} \mu_i^\low$.
Then for any $\delta_1, \delta_2\in[0, 1]$, with probability at least $1 - \delta_1 - \delta_2$, simultaneously for all $n\ge1$,
\begin{equation}\label{eq:empirical-bentkus-conf-seq}
    \mu \in[\mu_n^{\low*}, \mu_n^{\up*} ]\quad \mbox{and} \quad A \le
        \widebar{A}_n^*(\delta_2, U, L).
\end{equation}
\end{thm}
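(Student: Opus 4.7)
The plan is to apply Theorem~\ref{thm:uniform-in-n-empirical-Bentkus} to the centered variables $X_i = Y_i - \mu$ using the crude support $B = -\underline{B} = U - L$, and then to bootstrap a sharper statement inductively by refining the one-sided support plugged into $q$. Since $Y_i \in [L, U]$ almost surely, the $X_i$ are mean zero with variance $A^2$ and satisfy $X_i \in [L-\mu, U-\mu] \subseteq [-(U-L), U-L]$. Applying Theorem~\ref{thm:uniform-in-n-empirical-Bentkus} in each of the two tail directions (using $\delta_1/2$ in place of $\delta_1$) and union-bounding the two concentration failures, while observing that the variance failure event is common to both applications, I obtain an event $E$ with $\mathbb{P}(E) \ge 1 - \delta_1 - \delta_2$ on which, simultaneously for every $n \ge 1$,
\begin{align*}
S_n &\le q\!\left(\tfrac{\delta_1}{2h(k_n)};\, c_n,\, \widebar{A}_n^*(\delta_2, U, L),\, U - \mu\right),\\
-S_n &\le q\!\left(\tfrac{\delta_1}{2h(k_n)};\, c_n,\, \widebar{A}_n^*(\delta_2, U, L),\, \mu - L\right),\\
A &\le \widebar{A}_n^*(\delta_2, U, L).
\end{align*}
Crucially, $\widebar{A}_n^*(\delta_2, U, L)$ depends only on the observed data and on $(U,L)$, not on $\mu$, so it is a bona fide statistic.

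The second ingredient I need is monotonicity of $q$ in its one-sided bound argument: $B \mapsto q(\delta; n, \mathcal{A}, B)$ is nondecreasing. This is immediate from the supremum representation~\eqref{eq:intermediate-optimality}: for $B' \ge B$, every distribution satisfying~\eqref{eq:boundedness-conditions} with parameter $B$ also satisfies it with $B'$, so $\tilde{P}_{2,n}(u)$ defined in~\eqref{eq:definition-P2-tilde} (which is a supremum over the associated class) is nondecreasing in $B$, and hence so is its inverse $q$.

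The bulk of the argument is then a pathwise induction on $n$, carried out on the event $E$, of the statement $\mu \in [\mu_n^{\low*}, \mu_n^{\up*}]$. The base case $n = 0$ is immediate from $\mu \in [L, U] = [\mu_0^{\low*}, \mu_0^{\up*}]$. For the inductive step, the hypothesis $\mu \le \mu_{n-1}^{\up*} \le \mu_{n-1}^{\up}$ gives $\mu - L \le \mu_{n-1}^{\up} - L$, so monotonicity combined with the lower-tail bound on $E$ yields
\[
-S_n \;\le\; q\!\left(\tfrac{\delta_1}{2h(k_n)};\, c_n,\, \widebar{A}_n^*(\delta_2, U, L),\, \mu_{n-1}^{\up} - L\right),
\]
which rearranges to $\mu \le \mu_n^{\up}$; intersecting with $\mu \le \mu_{n-1}^{\up*}$ gives $\mu \le \mu_n^{\up*}$. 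The argument for $\mu \ge \mu_n^{\low*}$ is symmetric, using the upper-tail bound on $E$ together with $U - \mu \le U - \mu_{n-1}^{\low}$ obtained from $\mu \ge \mu_{n-1}^{\low*} \ge \mu_{n-1}^{\low}$.

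The main technical point is the monotonicity of $q$ in $B$. A direct approach via the two-atom distribution~\eqref{eq:G_distribution} is awkward, because both atoms (the negative one at $-A^2/B$ and the positive one at $B$) move nontrivially with $B$; the clean argument instead goes through the supremum-over-distributions characterization~\eqref{eq:intermediate-optimality}, which makes the monotonicity automatic. Everything else is a standard union-bound/stitching reduction followed by an induction that is routine, modulo carefully tracking which of $\mu_{n-1}^{\up}, \mu_{n-1}^{\up*}$ (respectively $\mu_{n-1}^{\low}, \mu_{n-1}^{\low*}$) is the correct valid over-estimate of the one-sided support at each step.
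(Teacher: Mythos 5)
Your proposal is correct and follows essentially the same route as the paper: center the $Y_i$, apply the uniform-in-$n$ Bentkus bound to both tails with budget $\delta_1/2$ each together with the variance over-estimate at level $\delta_2$, and then run the recursion $\mu_{n-1}^{\low}\le\mu\le\mu_{n-1}^{\up}$ to justify plugging the data-dependent support bounds into $q$. The one thing you add beyond the paper's writeup is an explicit proof that $B\mapsto q(\delta;n,\mathcal{A},B)$ is nondecreasing (via the supremum-over-distributions characterization in~\eqref{eq:intermediate-optimality}), a monotonicity fact the paper's proof uses implicitly in the recursive step; your argument for it is valid.
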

% Because $\mathbb{P}(L \le Y_i \le U) = 1$, it follows that $\mu\in[L, U]$ and
Because $\mu_0^{\up} = U, \mu_0^{\low} = L$, the confidence intervals $[\mu_n^{\low*}, \mu_n^{\up*}]$ is always a subset of $[L, U]$.% Further because $\mbox{Var}(Y_i) \le (U - L)^2/4$, we redefine $\widebar{A}_n^*(\delta_2, U, L)$ as $\min\{\widebar{A}_n^*(\delta_2, U, L), (U - L)/2\}$.

\section{Experiments}\label{sec:experiments}
We compare our adaptive Bentkus confidence
sequence~\eqref{eq:empirical-bentkus-conf-seq} with the adaptive Hoeffding
~\cite{zhao2016adaptive}, empirical Bernstein~\cite{mnih2008empirical}, and two other
versions of empirical Bernstein inequality from~\cite{howard2018uniform}: Eq. (24) and
Theorem 4 with the gamma-exponential boundary. Eq. (24) of~\citet{howard2018uniform} is a stitched confidence sequence, while Theorem 4 is a method of mixture confidence sequence.
% from Proposition 9 of \cite{howard2018uniform}.
%\footnote{Code is available at \url{https://github.com/enosair/bentkus_conf_seq}. }  
We denote these methods by \bk, \ah, \eb, \ho, and \hog,
respectively.   We use $\delta = 0.05$ for all the experiments. For \bk, we fix the
spacing parameter $\eta = 1.1$, the stitching function $h(k) = (k+1)^{1.1}\zeta(1.1)$,
and $\delta_1 = 2\delta/3, \delta_2 = \delta/3$.

Section~\ref{sec:comparison-with-conf-seq} examines the coverage probability and the
width of the confidence intervals constructed on a synthetic data from $\Bern{0.1}$; for other cases, see Appendix~\ref{appsec:more-simulations}.
Section~\ref{sec:adaptive-stopping} and ~\ref{sec:bandit} apply the confidence sequences to an adaptive
stopping algorithm for $(\varepsilon, \delta)$-mean estimation and the best arm identification problem.
\subsection{Confidence Sequences for Bernoulli Variables}
\label{sec:comparison-with-conf-seq}
\begin{figure}[t]
    \centering
    \begin{subfigure}{.45\textwidth}
        \includegraphics[width=0.95\textwidth]{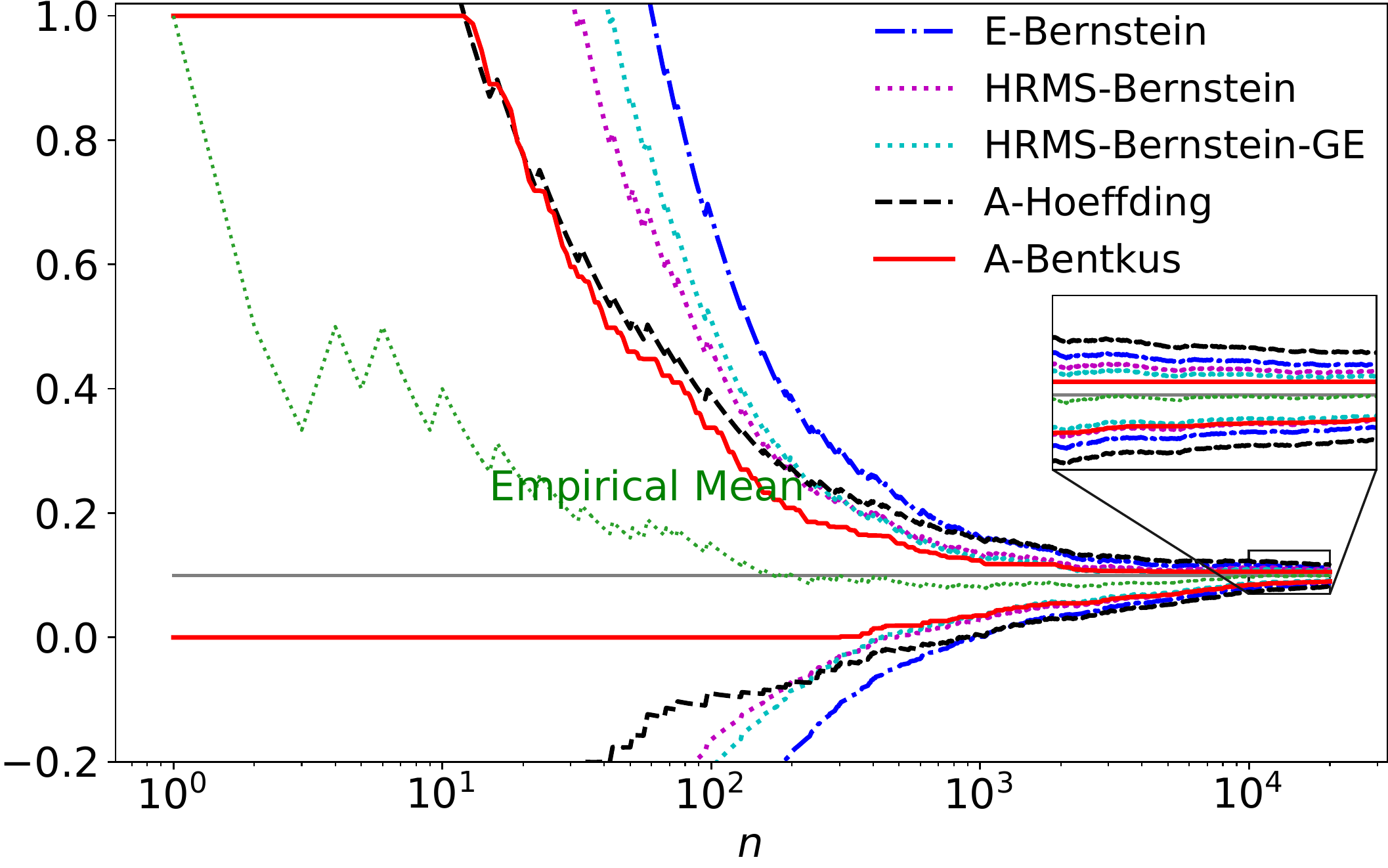}
    \caption{One Replication}
    \label{fig:one-replication}
    \end{subfigure}
    %\vskip1ex
    \begin{subfigure}{.45\textwidth}
    \includegraphics[width=0.95\textwidth]{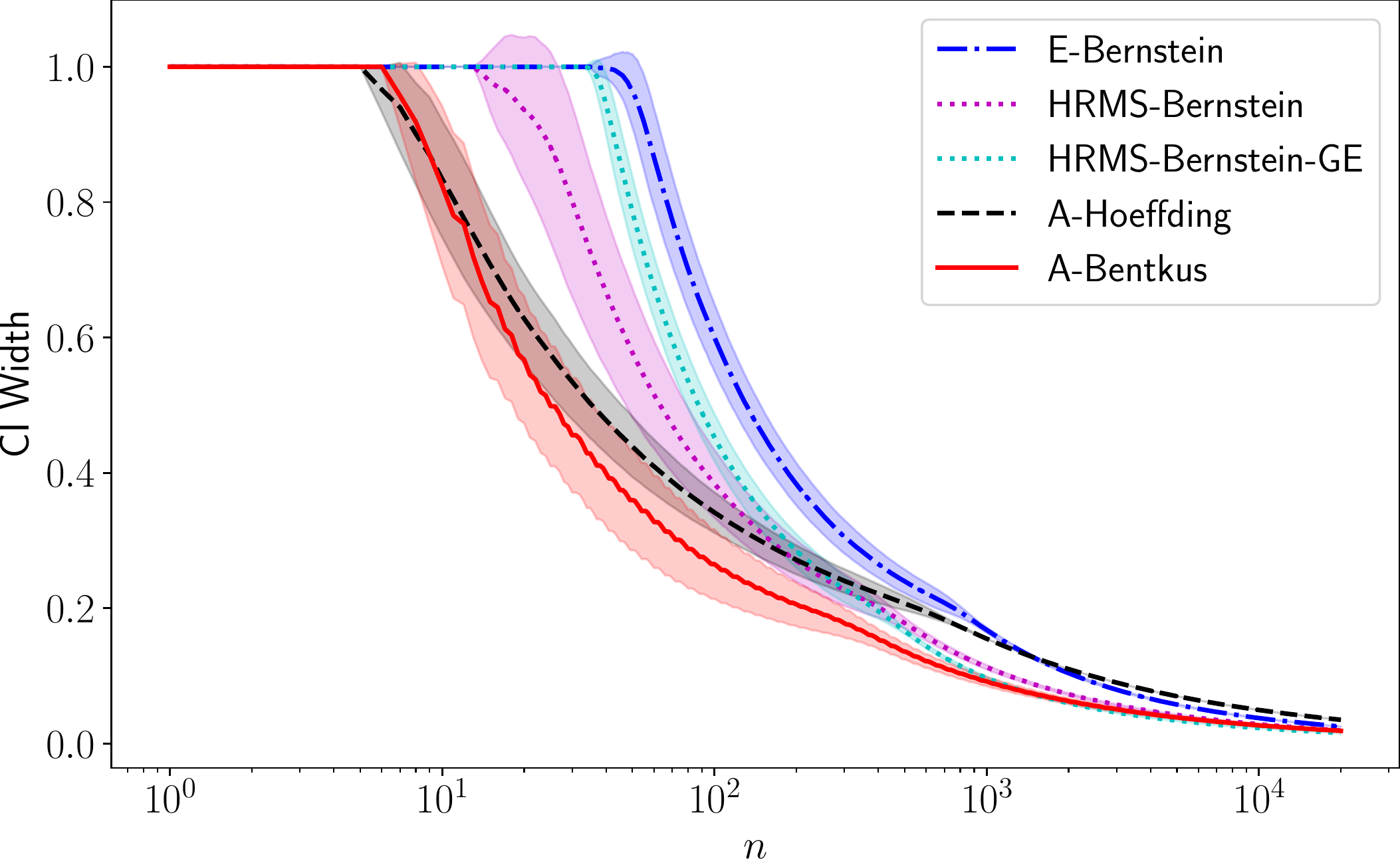}
    \caption{Average Width over 1000 Replications}
    \label{fig:multi-replications}
    \end{subfigure}
    \caption{
        The 95\% confidence sequences for the mean when $Y_i \sim \Bern{0.1}$. All the methods estimate the variance except \ah. \hog involves a tuning parameter $\rho$ which is chosen to optimize the sequence at $n = 500$ as suggested in \citet{howard2018uniform}.
        \textbf{(a)} shows the confidence sequences from a single replication.
        \textbf{(b)} shows the average widths of the confidence sequences over 1000 replications.
        The upper and lower bounds for all the other methods are cut at $1$ and $0$.
        % for a fair comparison.
        % In both cases, we see that \bk has smaller width.
    }
\end{figure}

We generate samples $Y_1, Y_2, \ldots, Y_{20000}
\stackrel{\iid}{\sim} \Bern{0.1}$ and compute the confidence sequences for
% the true mean
$\mu = 0.1$.  Figure~\ref{fig:one-replication} illustrates the confidence
sequences obtained and shows the sharpness of \bk.
For most of the cases ($n \geq 20$), \bk dominates the other methods.  For smaller
sample sizes, \bk also closely traces \ah and outperforms the others.  This is because the variance estimation is likely conservative and in which case our $\widebar{A}^*_n$ ends up using the trivial upper bound $(U - L)/2$, which is essentially what \ah is exploiting.  In fact, we have provided the same upper bound for all the other Bernstein-type methods too, and \bk still outperforms. This phenomenon shows the intrinsic sharpness of our bound.  %}

% \ah has the smallest confidence interval width almost all the
% time except in the very early stage, when the total number of samples is extremely low
% ($n \leq 10$).
% In fact, all the methods that estimate the variance (\eb, \ho, \hog and \bk)
% are conservative for small sample sizes. This is not surprising because there are not enough
% observations to get a reliable estimate of $A$. This gives a slight upper hand for \ah.}
% However, as $n$ increases, the width of \bk quickly decreases and outperforms all the
% other methods.

We repeat the above experiment 1000 times and report the average miscoverage
rate:
% \begin{equation}\label{eq:average-failure-coverage}
\[
% \textstyle
    \frac{1}{1000}\sum_{r=1}^{1000} \mathds{1}\{\mu \notin \mathrm{CI}_n^{(r)}
        \mbox{ for some } 1\le n\le 20000 \}.
\]
% \end{equation}
where $\mathrm{CI}_n^{(r)} $ is the confidence interval constructed after observing
$Y_1, \ldots, Y_n$ in the $r$-th replication.
The results are $0.001$ for $\bk$, $0.003$ for \hog, and $0$
for the others. All the methods control the miscoverage rate by $\delta = 0.05$ but are
all conservative. Recall from~\eqref{eq:main-optimality} that our failure
probability bound can be conservative up to a constant of $e^2/2$.  Furthermore, from the proofs of
Theorems~\ref{thm:uniform-in-n} and~\ref{thm:empirical-Bentkus-arbitrary-mean}, we get
that for $\eta = 1.1, h(k) = (k+1)^{1.1}\zeta(1.1)$,
\begin{align*}\textstyle
    &\mathbb{P}\big(\mu\notin\mathrm{CI}_n(\delta)\mbox{ for some }1\le n\le 20000\big)
    \\ 
    &\quad
    \le \textstyle\sum_{k=0}^{\log_{\eta}(20000)} {\delta}/{h(k)} \le 0.41\delta.
\end{align*}
For $\delta = 0.05$, $0.41\delta = 0.0205$. This
explains why the average miscoverage rate in the experiment is small.
% Nonetheless, \bk is the least conservative among all the methods.

We also report the average width of the confidence intervals in
Figure~\ref{fig:multi-replications}. All the values are between $0$ and $1$ as we cut the
bounds from above and below for the other methods.
%{\color{red}
As mentioned above, when $n$ is very small \bk closely traces \ah and both
    have smaller width. Yet the advantage of \ah disappears for $n\ge20$ and \bk enjoys
    smaller confidence interval width afterwards. \hog
    improves slightly on \bk after observing very large number of samples.

    %}

%\vspace{-\medskipamount}

% QQ: I commented out this part, because I think we didn't implement it and this might
% be too detailed?
%
%
% \begin{enumerate}[leftmargin=*]
% \item We also note that the width of the confidence intervals (capped at $U - L$) is mostly $U - L$ for smaller sample sizes. For this reason, we recommend not using a confidence sequence for $n \le 30$,\footnote{30 is not a very strict cut-off and is just based on Figure~\ref{fig:multi-replications}.} but just report $[L, U]$ and only start to construct a confidence sequence from $n > 30$, which is also a better use of the $\delta$ budget. (Theorem~\ref{thm:uniform-in-n} uses a union bound starting from $n = 1$ and when starting at $n > 30$, then union bound also start at $n = 30$.)
% \end{enumerate}

\subsection{Adaptive Stopping for Mean Estimation}\label{sec:adaptive-stopping}
In this section, we apply our confidence sequence to adaptively estimate the mean
of a bounded random variable $Y$.  The goal is to obtain an estimator $\muhat$ such that
the relative error $|\widehat{\mu}/\mu - 1|$ is bounded by $\varepsilon$, and
terminate the data sampling once such criterion is satisfied.
% This application shows how confidence sequences are in general used in practice.  All
% the applications that require confidence sequences essentially use them to determine
% the number of samples required to satisfy a certain guarantee. This is also the
% philosophy behind the sequential probability ratio test.

 % As described in Section~\ref{sec:intro}, in many
 % applications~\cite{dyer1991random,dagum1992approximating,dagum2000optimal,huber2015approximation},
 % it is not sufficient to obtain an estimator $\widehat{\theta}$ of $\mu$ such that
 % $|\widehat{\theta} - \mu| \le \varepsilon$ (absolute accuracy), we want an estimator
 % instance if $\mu$ is very small or very large compared to $\varepsilon$, then absolute
 % accuracy is not useful because the analyst is unaware of the magnitude of $\mu$.

% In this section,
% we apply our confidence
% sequence~\eqref{eq:empirical-bentkus-conf-seq} for the problem of adaptive stopping to
% obtain an $(\varepsilon, \delta)$-estimator
% .% and in this case we only compare with the
% empirical Bernstein bound~\cite{mnih2008empirical} because their experiments show that
% E-Bernstein has superior performance.
% \begin{wrapfigure}{r}{0.4\textwidth}

% \end{wrapfigure}
% Let $\Ybar$ denote the empirical mean.
Given $\Ybar$ the empirical mean and any confidence sequence centered at
$\widebar{Y}$ satisfying~\eqref{eq:validity-guarantee},
Algorithm~\ref{alg:adaptive-stopping} yields a valid stopping time and an $(\varepsilon,
\delta)$-accurate estimator; see~\citet[Section 3.1]{mnih2008empirical} for a proof.
Clearly, a tighter confidence sequence will require less data sampling
and yields a smaller stopping time. We follow the setup in \citet{mnih2008empirical}. The data samples are i.i.d
generated as $Y_i = m^{-1}\sum_{j=1}^m U_{ij}$, where $U_{ij}$ are
$\iid$ uniformly distributed in $[0, 1]$. This implies that $\mu =
% \mathbb{E}[Y_i] = 
\frac{1}{2}$ and $A^2 = 
% \mbox{Var}(Y_i) = 
\frac{1}{12m}$.
\begin{algorithm}[h]
 %\SetAlgoLined
 \DontPrintSemicolon
  % \KwIn{
  %     Bounds $L, U$ for $Y_1, Y_2, \ldots$, Accuracy $\varepsilon\in[0, 1]$; Failure probability $\delta\in[0,1]$\; A confidence sequence: $\{\mathrm{CI}_n(\delta) =
    %     $.
  % }
  \textbf{Initialization:} $n \gets 0$,\, $\mathrm{LB} \gets 0$,\, $\mathrm{UB} \gets \infty$\;
  \While{$(1 + \varepsilon)\mathrm{LB} < (1 - \varepsilon)\mathrm{UB}$}
  {
    $n \gets n$ + 1\;
        {\parbox{\dimexpr\textwidth-2\algomargin\relax}{
        Sample $Y_n$ and compute the $n$-th CI in the sequence:
        }}\\
            \hskip20pt $[\widebar{Y}_n - Q_n, \widebar{Y}_n + Q_n] \gets \mbox{ConfSeq}(n, \delta)$\;
      $\mathrm{LB} \gets \max\{\mathrm{LB},\,|\widebar{Y}_n| - Q_n\}$\;
      $\mathrm{UB} \gets \min\{\mathrm{UB},\,|\widebar{Y}_n| + Q_n\}$\;
  }
  \Return{stopping time $N = n$ and estimator $\widehat{\mu} = (1/2)\mathrm{sign}(\widebar{Y}_N)[(1 + \varepsilon)\mathrm{LB} + (1 - \varepsilon)\mathrm{UB}]$}
    \caption{Adaptive Stopping Algorithm}
    \label{alg:adaptive-stopping}
\end{algorithm}
\begin{figure}[tb]
    \centering
    \includegraphics[width=0.55\textwidth]{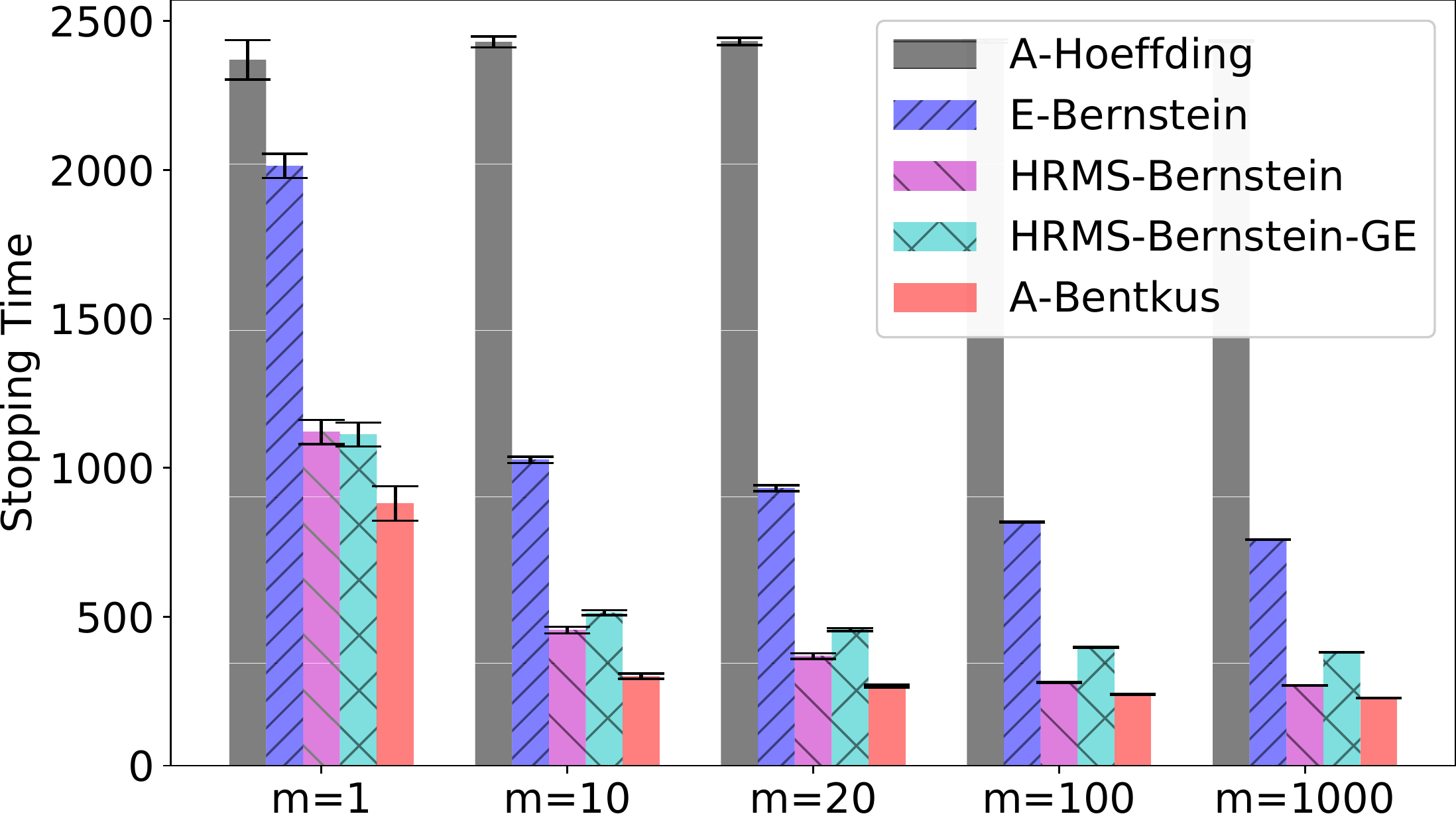}
    \caption{Comparison of confidence sequences for an  $(\varepsilon,
        \delta)$-estimator. Here $\varepsilon=0.1$ and $\delta = 0.05$.}
    \label{fig:adaptive_stop}
\end{figure}

Because Algorithm~\ref{alg:adaptive-stopping} requires
symmetric intervals, we shall symmetrize the intervals returned by \bk by taking
the largest deviation.
We consider 5 cases: $m = 1, 10, 20, 100, 1000$ and report the average stopping time (i.e. the number of samples required to achieve $(\epsilon, \delta) = (0.1, 0.05)$ accuracy)
based on $200$ trials in Figure~\ref{fig:adaptive_stop}. \hog involves a tuning parameter $\rho$, chosen here to optimize the confidence sequence at $n = 10$ (best out of $10, 50, 100, 200$). As $m$ increases, the variance of $Y_i$
decreases.
% is decreasing.
As expected, \ah does not exploit the variance of random variables so the stopping times
remains roughly the same. For others, the stopping time is decreasing.  It is clear that
on average, \bk is the best for all values of $m$
and the ratios of our stopping time to the second best are $0.79$, $0.66$, $0.72$,
$0.86$, $0.84$. %Further,
\subsection{Best Arm Identification}
\label{sec:bandit}
In this section, we study the fixed confidence best arm identification, a classic multi-arm bandit problem. An agent is presented with a set of $K$ arms $\Arm$, with unknown expected rewards $\mu_1, \ldots, \mu_K$. Sequentially, the agent pulls an arm $\alpha \in \Arm$ of his choice and observes a reward value, until he finally claims one arm to have the largest expected reward. The goal is to correctly identify the best arm with fewer pulls $N$, i.e. smaller sample complexity. This problem has been extensively studied; see, e.g., \citet{
even2002pac, 
karnin2013almost, 
jamieson2013finding, 
jamieson2014lil, jamieson2014best, chen2015optimal}. \citet{zhao2016adaptive} provided an algorithm based on \ah that outperforms previous algorithms
% .
including LIL-UCB, LIL-LUCB. 
Here we present it as Algorithm~\ref{alg:bestarm}
in a general form that utilizes any valid confidence sequences,
and use \bk as well as the competing ones in it to
compare their performance. Following the proof of \citet[Theorem 5]{zhao2016adaptive}, 
one can show that Algorithm~\ref{alg:bestarm} outputs the best arm with probability at least $1 - \delta$.

\begin{algorithm}[h]
\DontPrintSemicolon
\textbf{Input:} failure probability $\delta$, a set of arms $\Arm$\;
\textbf{Initialization:} $N \gets 0$; $n_\alpha \gets 0, \; \forall \alpha \in \Arm$\;
\While{$\Arm$ has more than one arms}
{
Compute empirical mean reward $\muhat_\alpha$, $\forall \alpha \in \Arm$\;
$\ahat \gets \argmax_{\alpha \in \Arm} \muhat_\alpha$\;
\For{every arm $\alpha \in \Arm$}{
    {\parbox{\dimexpr\textwidth-2\algomargin\relax}{
        $ \delta_\alpha \gets
        \begin{cases}
            \frac{\delta/2}{|\Arm|-1} & \mbox{if } \alpha = \ahat \\
            \frac{\delta}{2} & \mbox{otherwise}
        \end{cases}
        $\;
        % Compute the $n_\alpha$-th CI $[L_{\alpha}, U_{\alpha}]$ and its radius $R_\alpha$ in $\mbox{ConfSeq}(\delta_\alpha)$ \;
        $[L_{\alpha}, U_{\alpha}] \leftarrow$   the $n_\alpha$-th CI of $\mbox{ConfSeq}(\delta_\alpha)$\;
        $R_\alpha \leftarrow $ radius of the $n_\alpha$-th CI of $\mbox{ConfSeq}(\delta_\alpha)$\;
        % CI $[L_{\alpha}, U_{\alpha}]$ and CI radius $R_\alpha \gets \mbox{ConfSeq}(n_{\alpha}, \delta_\alpha)$\;
    }}
}
Sample from the arm $\alpha$ with largest radius $R_\alpha$\;
$n_{\alpha} \gets n_{\alpha} + 1$, $N \gets N+1$\;
Remove arm $\alpha$ from $\Arm$ if $U_\alpha < L_{\ahat}$
}
\Return{the remaining arm in $\Arm$, number of pulls $N$}
\caption{Best Arm Identification}
\label{alg:bestarm}
\end{algorithm}
\begin{figure}[t]
    \centering
    \includegraphics[width=0.55\textwidth]{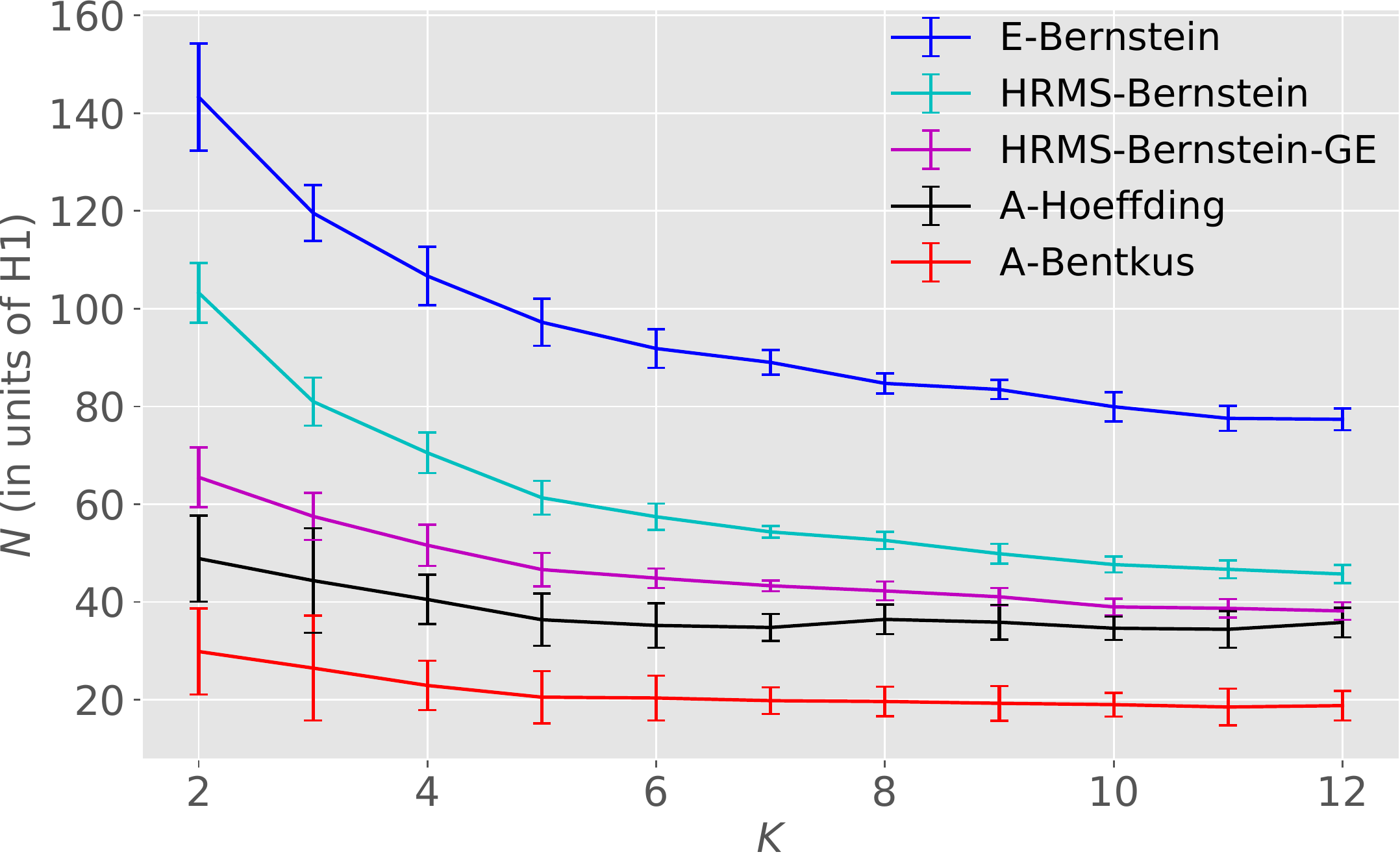}
    \caption{The number of pulls $N$ versus the the number of arms $K$. $\delta = 0.05$. The results are averaged over 10 trials. }
    \label{fig:best_arm}
\end{figure}

The experiment setup  follows \citet{
jamieson2014best, 
zhao2016adaptive}. Each arm is generating random Bernoulli rewards with %expected value 
$\mu_\alpha = 1 - (\frac{\alpha}{K})^{0.6},$ $\alpha = 0, \ldots, K-1$; the first arm has highest expected reward $\mu_0 =1$. The problem hardness is measured by 
$\text{H1} = \sum_{\alpha \neq 0} (\mu_\alpha - \mu_0)^{-2}$~\cite{jamieson2014best}, which is roughly $0.4K^{1.2}$ in our setup.

{%\color{blue}
In Algorithm~\ref{alg:bestarm}, the sampling of an arm depends on $R_{\alpha}$, the radius of the confidence interval.
In our experiments, we find 
% an interesting phenomenon 
that a confidence sequence for which $R_{\alpha}$ stays constant for a stretch of samples yields a larger sample complexity. Our intuition is that Algorithm~\ref{alg:bestarm} keeps selecting the same arm when the radius is not updated, therefore it forgoes a number of samples; see Appendix~\ref{appsec:best_arm} for more details. 
This phenomenon happens for all confidence sequences when truncated to $[0,1]$, where the intervals stay constant at $[0, 1]$ for the first few samples, see Figure~\ref{fig:one-replication}. 
For \bk, the cumulative maximum/minimum ($\mu_n^{\low*}$ and $\mu_n^{\up*}$ in Theorem~\ref{thm:empirical-Bentkus-arbitrary-mean}) also leads to the constant radius problem. Hence, for smaller sample complexity, we set $L_{\alpha} = \mu_{n_\alpha}^{\low*}, U_{\alpha} = \mu_{n_\alpha}^{\up*}$ and $R_{\alpha}= (\mu_{n_\alpha}^{\up} - \mu_{n_\alpha}^{\low}) / 2 $ instead of $(\mu_{n_{\alpha}}^{\low*} - \mu_{n_{\alpha}}^{\up*})/2$.% for \bk while .

Our experiments are reported in Figure~\ref{fig:best_arm}. \bk significantly outperforms the competing approaches, including \ah which beats LIL-UCB, LIL-LUCB \cite{zhao2016adaptive}. 
Further, \bk only requires 52\% to 61\% of the samples required by \ah. 
Finally, we note that the Bernstein type of methods underperform because they have larger confidence intervals for small to moderate number of samples as can be seen in Figure~\ref{fig:one-replication}.
}

\section{Conclusion}\label{sec:summary-future-directions}
We proposed a confidence sequence for bounded random variables and
examined its efficacy in synthetic examples and applications.
Our method is favorable to methods that utilize classical concentration results. It can be applied to various problems for improved performance, including testing equality of distributions, testing independence~\cite{balsubramani2016sequential}, etc.
Our work can be extended in a few future directions. We assumed that $X_i$'s are independent and bounded. The generalizations for the dependent case and/or the sub-Gaussian case are of interest. The generalized results can be obtained based on the results of \citet[Theorem 2.1]{pinelis2006binomial} and \citet{bentkus2010bounds}.

Regarding dependence, Theorem~2.1 of~\citet{pinelis2006binomial} shows that Theorem~\ref{thm:initial-maximal-ineq} holds even if $X_i$'s form a supermartingale difference sequence
, i.e., assumption~\eqref{eq:boundedness-conditions} is replaced by $\mathbb{E}[X_i|X_1, \ldots, X_{i-1}] \le 0$, $\mathbb{P}(X_i > B) = 0$, and
$\mathbb{P}(\mathbb{E}[X_i^2|X_1, \ldots, X_{i-1}] \le A_i^2) = 1.$
Theorem~\ref{thm:uniform-in-n} follows readily, but Theorem~\ref{thm:uniform-in-n-empirical-Bentkus} requires further restrictions that allow estimation of $A^2_i$. % without further structure and a generalization of Theorem~\ref{thm:uniform-in-n-empirical-Bentkus} for this case is an interesting future direction.

The boundedness assumption~\eqref{eq:boundedness-conditions}, which maybe restrictive for applications in statistics, finance and economics,
can be replaced by
\begin{equation}\label{eq:generalize-bnd-conditions}
\textstyle
\mathbb{E}[X_i] = 0,\;\mbox{Var}(X_i) \le A_i^2,\quad\mbox{and}\quad \mathbb{P}(X_i > x) \le \widebar{F}(x),
\end{equation}
for all $x\in\mathbb{R}$, where $\widebar{F}(\cdot)$ is a survival function on $[0, \infty)$, i.e., $\widebar{F}(\cdot)$ is non-increasing and $\widebar{F}(0) = 1$ and $\widebar{F}(\infty) = 0$.
For example,
$\widebar{F}(x) = 1/\{1 + (x/K)_+^{\alpha}\}$, {or} $\widebar{F}(x) = \exp(-(x/K)_+^{\alpha})$, {for some} $K > 0$;
$\alpha = 2$ in the second example is sub-Gaussianity.
Akin to~\eqref{eq:main-optimality}, there exist random variables $\eta_i = \eta_i(A_i^2, \widebar{F})$ satisfying~\eqref{eq:generalize-bnd-conditions} such that
$$\textstyle \sup_{X_i\sim~\eqref{eq:generalize-bnd-conditions}}\mathbb{E}[\left(\sum_{i=1}^n X_i - t\right)_+^2] = \mathbb{E}[\left(\sum_{i=1}^n \eta_i - t\right)_+^2], $$
for all $n\ge1$ and $t\in\mathbb{R}$. Here the superemum is taken over all distributions satisfying~\eqref{eq:generalize-bnd-conditions}. Hence, Theorem~\ref{thm:initial-maximal-ineq} can be generalized, which in turn leads to generalizations of Theorems~\ref{thm:uniform-in-n} and~\ref{thm:uniform-in-n-empirical-Bentkus}. The details on the construction of~$\eta_i$ and the corresponding confidence sequence will be discussed elsewhere.

\bibliography{AssumpLean}
\bibliographystyle{apalike}
\clearpage
\appendix
\section{Competing Concentration Bounds}
\begin{thm}[Hoeffding; Theorem 3.1.2 of~\citet{gine2016mathematical}]
If $X_1, \ldots, X_n$ are independent mean-zero random variables satisfying $\mathbb{P}(\underline{B} \le X_i \le B) = 1$, then
    \[
        \P \left( S_n \geq \sqrt{\frac{1}{2} n (B - \underline{B})^2 \log \left(\frac{1}{\delta}\right)} \, \right) \leq \delta, \quad  \forall \delta \in [0, 1].
    \]
\end{thm}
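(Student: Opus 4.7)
The plan is to follow the classical Cram\'er--Chernoff route, since the target bound has the Gaussian-type form that such arguments deliver. First I would apply Markov's inequality to $e^{\lambda S_n}$ for an arbitrary $\lambda > 0$, obtaining $\mathbb{P}(S_n \ge t) \le e^{-\lambda t}\,\mathbb{E}[e^{\lambda S_n}]$. Independence then factorizes the moment generating function as $\mathbb{E}[e^{\lambda S_n}] = \prod_{i=1}^n \mathbb{E}[e^{\lambda X_i}]$, reducing the problem to controlling the MGF of each bounded, mean-zero summand.

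The core step is Hoeffding's lemma: for any mean-zero random variable $X$ with $\mathbb{P}(\underline{B} \le X \le B) = 1$,
\[
\mathbb{E}[e^{\lambda X}] \;\le\; \exp\!\left(\frac{\lambda^2 (B - \underline{B})^2}{8}\right).
\]
I would prove this by writing $X = \theta B + (1-\theta)\underline{B}$ with $\theta = (X - \underline{B})/(B - \underline{B}) \in [0,1]$, using convexity of $t \mapsto e^{\lambda t}$ to bound $e^{\lambda X}$ by the linear interpolation, then taking expectations (using $\mathbb{E}[X] = 0$) to obtain a one-variable function $\varphi(u) := -pu + \log(1 - p + p e^u)$ with $u = \lambda(B - \underline{B})$ and $p = -\underline{B}/(B - \underline{B})$. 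A short Taylor-with-remainder argument gives $\varphi(u) \le u^2/8$, which is the lemma.

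Combining the lemma over the $n$ independent summands yields $\mathbb{P}(S_n \ge t) \le \exp\!\left(-\lambda t + n\lambda^2 (B - \underline{B})^2/8\right)$. I would then minimize the exponent in $\lambda$, which is a quadratic with optimum $\lambda^\star = 4t/\left(n(B - \underline{B})^2\right)$, giving $\mathbb{P}(S_n \ge t) \le \exp\!\left(-2t^2/(n(B - \underline{B})^2)\right)$. Finally, setting this upper bound equal to $\delta$ and solving for $t$ produces $t = \sqrt{\tfrac{1}{2} n (B - \underline{B})^2 \log(1/\delta)}$, which is exactly the claimed threshold.

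The only nontrivial step is Hoeffding's lemma; everything else is one-line Markov, factorization by independence, and a quadratic optimization. The lemma itself is standard but requires the convexity-plus-Taylor estimate described above, and it is where all the sharpness of the constant $1/2$ in the final bound comes from. Once the lemma is in hand, the rest of the proof is mechanical.
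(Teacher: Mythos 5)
Your proof is correct and is the standard Cram\'er--Chernoff argument via Hoeffding's lemma; the paper itself offers no proof of this statement, citing it directly from Theorem 3.1.2 of Gin\'e and Nickl, where essentially this same argument (Markov on $e^{\lambda S_n}$, factorization by independence, the MGF bound $\exp(\lambda^2(B-\underline{B})^2/8)$ via convexity and a second-derivative estimate, then quadratic optimization in $\lambda$) is given. All your computations check out, including the optimal $\lambda^\star = 4t/(n(B-\underline{B})^2)$ and the resulting threshold $t = \sqrt{\tfrac12 n (B-\underline{B})^2\log(1/\delta)}$.
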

(There is a generalization of Hoeffding's inequality that relaxes the boundedness assumption by a sub-Gaussian assumption; see~\citet{zhao2016adaptive} for details.)

\begin{thm}[Adaptive Hoeffding; Corollary 1 of~\citet{zhao2016adaptive}]
If $X_1, \ldots, X_n$ are independent mean-zero random variables satisfying $\mathbb{P}(\underline{B} \le X_i \le B) = 1$, then
    \[
        \P \left(\exists n\ge 1:\, S_n \geq (B-\underline{B}) \sqrt{ 0.6 n \log(\log_{1.1}n + 1) + \frac{\log(12/\delta)}{1.8}n } \, \right)
        \leq \delta, \quad \forall \delta \in [0, 1].
    \]
\end{thm}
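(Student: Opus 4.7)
The plan is to mirror the stitching argument used for Theorem~\ref{thm:uniform-in-n}, substituting Bentkus' maximal inequality with the classical Hoeffding maximal inequality. Hoeffding's maximal inequality,
\begin{equation*}
\P\bigl( \max_{1 \le t \le m} S_t \ge u \bigr) \;\le\; \exp\bigl( -2 u^2 / (m (B - \underline{B})^2) \bigr),
\end{equation*}
follows by applying Doob's submartingale inequality to $\exp(\lambda S_t)$, invoking Hoeffding's lemma to bound $\E[\exp(\lambda S_m)]$, and optimizing over $\lambda \ge 0$. Inverting it gives $\max_{t \le m} S_t \le (B - \underline{B})\sqrt{m \log(1/\delta')/2}$ with probability at least $1 - \delta'$. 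This is the fixed-$m$ counterpart of Theorem~\ref{thm:initial-maximal-ineq} that we stitch.

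Next, I would partition $\{n \ge 1\}$ into the same geometric epochs used for Theorem~\ref{thm:uniform-in-n}: let $E_k = \{n : \lceil \eta^k \rceil \le n \le \lfloor \eta^{k+1} \rfloor\}$ with $\eta = 1.1$ and $c_k = \lfloor \eta^{k+1}\rfloor$. Allocate failure budget $\delta_k$ to epoch $E_k$ with $\sum_{k \ge 0}\delta_k \le \delta$, and within each $E_k$ apply the maximal Hoeffding bound above with $m = c_k$ and confidence level $\delta_k$. Since $S_n \le \max_{t \le c_k} S_t$ for every $n \in E_k$, a union bound over $k$ produces the stitched tail
\begin{equation*}
\P\bigl( \exists\, n \ge 1 :\, S_n \ge (B - \underline{B})\sqrt{c_{k_n} \log(1/\delta_{k_n})/2}\, \bigr) \le \delta,
\end{equation*}
where $k_n$ denotes the unique epoch index containing $n$.

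To recover the closed form in the theorem, take $\delta_k = \delta / (\zeta(p)(k+1)^p)$ for some $p > 1$ (so that $\sum_k \delta_k \le \delta$), then simplify using $c_{k_n}/n \le \eta$ and $k_n + 1 \le \log_\eta n + 1$. The resulting radius has the shape $(B - \underline{B})\sqrt{C_1\, n \log(\log_{1.1} n + 1) + C_2\, n \log(C_3/\delta)}$ with $C_1 \approx \eta p/2$, $C_2 \approx \eta/2$, and $C_3$ a function of $\zeta(p)$. The main obstacle is calibrating $(C_1, C_2, C_3)$ to land exactly on $(0.6,\, 1/1.8,\, 12)$: the naive choice $p = 1.1$ gives $\eta p/2 = 0.605$, a hair above $0.6$, so closing the gap requires either sharpening the loose bound $c_k/n \le \eta$ via $n \ge \lceil \eta^k \rceil > \eta^k$ for non-integer $\eta^k$, or nudging $p$ slightly below $1.1$ and rebalancing through $\zeta(p)$. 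This is routine numerical bookkeeping, but must be carried out with care so that all three constants emerge simultaneously.
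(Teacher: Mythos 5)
This statement is reproduced in the paper's appendix as Corollary~1 of \citet{zhao2016adaptive} and is not proved there, so there is no in-paper argument to compare against; your stitching route is the standard one (and is essentially how Zhao et al.\ derive it: a sub-Gaussian maximal inequality peeled over the same geometric epochs), and it parallels the paper's own proof of Theorem~\ref{thm:uniform-in-n}. Every step you state is sound: the maximal Hoeffding bound $\P(\max_{1\le t\le m}S_t\ge u)\le \exp(-2u^2/(m(B-\underline{B})^2))$ follows from Doob applied to $\exp(\lambda S_t)$ plus Hoeffding's lemma; $S_n\le\max_{t\le c_{k_n}}S_t$ for $n$ in epoch $k_n$; and the two simplifications $c_{k_n}\le\eta n$ and $k_n+1\le\log_\eta n+1$ both follow from $\eta^{k_n}\le\lceil\eta^{k_n}\rceil\le n\le\lfloor\eta^{k_n+1}\rfloor$. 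The one piece you leave open, the constant calibration, does close without any sharpening of $c_k\le \eta n$: take $p=2(0.6)/\eta=12/11\approx 1.0909$, so that $\eta p/2=0.6$ exactly; then $\zeta(12/11)\approx 11.58<12$ and $\eta/2=0.55<1/1.8\approx 0.5556$, whence $\frac{\eta}{2}\log(\zeta(p)/\delta)=0.55\log(1/\delta)+0.55\log(11.58)\le \frac{1}{1.8}\log(1/\delta)+\frac{1}{1.8}\log 12=\frac{1}{1.8}\log(12/\delta)$ termwise for every $\delta\in(0,1]$. Thus your second suggested fix (nudging $p$ below $1.1$ and letting $\zeta(p)$ absorb the slack into the constant $12$) suffices, and all three constants $(0.6,\,1/1.8,\,12)$ emerge simultaneously; the stitched radius is then pointwise below the stated one, which gives the claimed inequality.
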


\begin{thm}[Bernstein; Theorem 3.1.7 of~\citet{gine2016mathematical}] If $X_1, \ldots, X_n, \ldots$ are independent random variables satisfying~\eqref{eq:boundedness-conditions}, then
    \[
        \P \left( S_n \geq \sqrt{
         2 \sum_{i=1}^n A_i^2 \log\left(\frac{1}{\delta}\right)
         + \frac{1}{9} B^2\log^2\left(\frac{1}{\delta}\right)
        } + \frac{1}{3}B\log\left(\frac{1}{\delta}\right)\, \right)
        \leq \delta, \quad \forall \delta \in [0, 1].
    \]
\end{thm}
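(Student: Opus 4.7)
The plan is to follow the classical Cram\'er--Chernoff route: bound the moment generating function of each $X_i$, multiply across independent coordinates, apply Markov's inequality, optimize the free parameter, and finally invert the resulting exponential bound to solve for $t$ in terms of $\delta$.

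The first step is to control the MGF of a single $X_i$. I use only the one-sided bound $X_i \le B$ together with $\mathbb{E}[X_i] = 0$ and $\mathrm{Var}(X_i) \le A_i^2$. For fixed $\lambda > 0$, the function $x \mapsto (e^{\lambda x} - 1 - \lambda x)/x^2$ (extended by continuity at $0$) is nondecreasing in $x$, so on $\{X_i \le B\}$,
\begin{equation}
e^{\lambda X_i} - 1 - \lambda X_i \;\le\; X_i^2 \cdot \frac{e^{\lambda B} - 1 - \lambda B}{B^2}.
\end{equation}
Taking expectations and using the mean-zero and variance assumptions gives
\begin{equation}
\mathbb{E}[e^{\lambda X_i}] \;\le\; 1 + \frac{A_i^2}{B^2}\bigl(e^{\lambda B} - 1 - \lambda B\bigr) \;\le\; \exp\!\Bigl(\tfrac{A_i^2}{B^2}(e^{\lambda B} - 1 - \lambda B)\Bigr).
\end{equation}
Next I invoke the elementary inequality $e^y - 1 - y \le y^2/[2(1 - y/3)]$ valid for $0 \le y < 3$ with $y = \lambda B$. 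This yields the Bernstein-style MGF bound
\begin{equation}
\mathbb{E}[e^{\lambda X_i}] \;\le\; \exp\!\Bigl(\tfrac{\lambda^2 A_i^2}{2(1 - \lambda B/3)}\Bigr), \qquad 0 < \lambda < 3/B.
\end{equation}

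By independence, writing $\sigma^2 := \sum_{i=1}^n A_i^2$, the product of MGFs gives $\mathbb{E}[e^{\lambda S_n}] \le \exp\!\bigl(\lambda^2 \sigma^2 / [2(1 - \lambda B/3)]\bigr)$. Markov's inequality applied to $e^{\lambda S_n}$ then yields, for every $t \ge 0$ and every $\lambda \in (0, 3/B)$,
\begin{equation}
\mathbb{P}(S_n \ge t) \;\le\; \exp\!\Bigl(\tfrac{\lambda^2 \sigma^2}{2(1 - \lambda B/3)} - \lambda t\Bigr).
\end{equation}
The standard choice $\lambda^\star = t/(\sigma^2 + Bt/3)$ (which one checks lies in $(0, 3/B)$) produces the familiar Bernstein tail
\begin{equation}
\mathbb{P}(S_n \ge t) \;\le\; \exp\!\Bigl(-\tfrac{t^2}{2\sigma^2 + 2Bt/3}\Bigr).
\end{equation}

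The final step is purely algebraic: set the right-hand side equal to $\delta$, i.e. solve $t^2 = (2\sigma^2 + 2Bt/3)\log(1/\delta)$. Writing $L = \log(1/\delta)$, this is the quadratic $t^2 - (2BL/3)\,t - 2\sigma^2 L = 0$, whose positive root is
\begin{equation}
t^\star \;=\; \frac{BL}{3} + \sqrt{\frac{B^2 L^2}{9} + 2\sigma^2 L} \;=\; \sqrt{2\sigma^2 \log(1/\delta) + \tfrac{1}{9}B^2 \log^2(1/\delta)} + \tfrac{B}{3}\log(1/\delta).
\end{equation}
Since $\mathbb{P}(S_n \ge t^\star) \le \delta$, this is exactly the stated inequality. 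No step is genuinely hard; the only care is in the first step (the monotonicity argument that turns the one-sided bound $X_i \le B$ into an MGF bound through the variance), which is the key point where boundedness from above \emph{and} the second-moment hypothesis combine.
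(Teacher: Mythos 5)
Your proof is correct, and it is the classical Cram\'er--Chernoff/Bernstein argument: the monotonicity of $x\mapsto(e^{\lambda x}-1-\lambda x)/x^2$, the bound $e^y-1-y\le y^2/[2(1-y/3)]$, Markov with the optimized $\lambda^\star$, and inversion of the quadratic in $t$ all check out. The paper does not prove this statement itself --- it merely cites it as Theorem 3.1.7 of Gin\'e and Nickl and lists it among the competing bounds --- and your derivation coincides with the standard proof of that cited result.
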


\begin{thm}[Empirical Bernstein; Eq. (5) of~\citet{mnih2008empirical}]
If $X_1, X_2, \ldots$ are independent mean zero random variables satisfying~\eqref{eq:boundedness-conditions} with $A_1 = A_2 = \ldots = A$, then
\[
\mathbb{P}\left(\exists n \ge 1:\, S_n \ge \sqrt{2 n \eta \widehat{A}_n^2\log(3h(k_n)/(2\delta))} + 3 B \eta\log(3h(k_n)/(2\delta))\right) \le \delta,
\]
where $\widehat{A}_n^2$ is the sample variance and $k_n$ is the constant defined in Theorem~\ref{thm:uniform-in-n}.
\end{thm}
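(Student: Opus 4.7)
The plan is to mirror the stitching architecture of Theorem~\ref{thm:uniform-in-n}, but with the Bentkus quantile replaced by the classical Bernstein tail bound and the unknown variance $A^2$ replaced by $\widehat{A}_n^2$ through an auxiliary uniform-in-$n$ variance concentration. The $\delta$-budget is split as $2\delta/3$ for the mean tail and $\delta/3$ for the variance tail; this split is the source of the factor $3/(2\delta)$ inside the logarithm of the statement.

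First, I partition $\{n\ge1\}$ into the geometric epochs $\mathcal{E}_k = \{\lceil\eta^k\rceil \le n \le \lfloor\eta^{k+1}\rfloor\}$ used for Theorem~\ref{thm:uniform-in-n}, so that $n \in \mathcal{E}_{k_n}$ and $n \le c_n \le \eta n$. Within a single epoch, I need a maximal form of Bernstein, namely $\mathbb{P}(\max_{1\le t\le c_n} S_t \ge \sqrt{2 c_n A^2 L} + B L/3) \le e^{-L}$, obtained by applying Doob's maximal inequality to the non-negative supermartingale $\exp(\lambda S_t - \psi(\lambda) t A^2)$ arising in the Cramér--Chernoff proof of Bernstein. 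Setting $L = \log(3h(k_n)/(2\delta))$ and bounding $c_n \le \eta n$, the within-epoch tail becomes $\sqrt{2\eta n A^2 L} + BL/3$, and union-bounding across $k$ using $\sum_k 1/h(k) \le 1$ absorbs the $h(k_n)$ factor into the total budget $2\delta/3$.

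Second, to replace $A^2$ by $\widehat{A}_n^2$, I apply exactly the same stitching machinery to the non-negative independent variables $W_i := (X_{2i}-X_{2i-1})^2/2 \in [0, 2B^2]$, which have mean $A^2$. A one-sided Bernstein applied to $A^2 - \bar{W}$ and stitched with the $h$-weighted union bound yields an inequality of the form $A^2 \le \eta \widehat{A}_n^2 + C B^2 L / n$ valid uniformly in $n$ with probability at least $1 - \delta/3$, for an explicit constant $C$. Substituting this into the mean tail via $\sqrt{a+b} \le \sqrt{a}+\sqrt{b}$ gives $\sqrt{2\eta n A^2 L} \le \sqrt{2\eta n \cdot \eta \widehat{A}_n^2 L} + B\sqrt{2C\eta}\,L$, and the remaining $BL$ terms consolidate into the claimed $3B\eta L$ additive piece.

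The principal obstacle is the constant bookkeeping: one must arrange the $\eta$-budget to appear (i) once as $c_n \le \eta n$ in the mean tail, (ii) again as the multiplicative inflation in $\eta\widehat{A}_n^2$ coming from the stitched variance bound, and (iii) a third time in the additive coefficient $3\eta$ absorbing both the native $BL/3$ from Bernstein and the $B\sqrt{2C\eta}\,L$ produced by the AM--GM step. Picking the constant $C$ in the variance concentration to satisfy $\sqrt{2C\eta} + 1/3 \le 3\eta$ then closes the proof. Aside from this constant-tracking, no conceptual ingredient beyond maximal Bernstein and stitching (both already established earlier in the paper) is required.
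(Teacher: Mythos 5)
First, note that the paper itself does not prove this statement: it appears in the appendix ``Competing Concentration Bounds'' purely as a quotation of Eq.~(5) of Mnih et al.\ (2008) (building on Audibert et al.\ 2009), so there is no in-paper proof to match. Your skeleton---a within-epoch maximal Bernstein inequality, geometric stitching with the weights $1/h(k)$, and a uniform-in-$n$ over-estimate of the variance substituted into the Bernstein radius---is indeed the standard architecture behind this bound and mirrors what the paper does for its own empirical Bentkus sequence. The gap is in the variance-substitution step, and it is not just bookkeeping.

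Concretely, you posit a stitched variance bound of the form $A^2 \le \eta\,\widehat{A}_n^2 + C B^2 L/n$ and then claim the proof closes by ``picking $C$'' so that $\sqrt{2C\eta}+1/3\le 3\eta$. Two things go wrong. (i) The multiplicative $\eta$ on $\widehat{A}_n^2$ compounds with the $\eta$ you already spent on $c_n\le\eta n$: substituting gives a leading term $\sqrt{2\eta n\cdot\eta\widehat{A}_n^2L}=\sqrt{2\eta^2 n\widehat{A}_n^2L}$, whereas the statement claims $\sqrt{2\eta n\widehat{A}_n^2L}$ with a single $\eta$ under the root; your route therefore proves a strictly weaker inequality. (ii) $C$ is not a free parameter. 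The only way your argument produces a \emph{multiplicative} inflation factor $\eta$ on $\widehat{A}_n^2$ is an AM--GM linearization of the cross term $BA\sqrt{L/n}$, and that forces $C\gtrsim 1/(\eta-1)$; for the regime the paper actually uses ($\eta=1.1$) one gets $C$ in the tens, so $\sqrt{2C\eta}\approx 7$--$8$ and your closing condition $\sqrt{2C\eta}+1/3\le 3\eta=3.3$ is violated. The correct device---used by Audibert et al.\ and by this paper in its own variance lemma, where the over-estimate is $\sqrt{\widehat{A}_n^2+g^2}+g$---is to exploit that $\mathrm{Var}\big((X_i-\mu)^2\big)\le B^2A^2$ is itself proportional to $A^2$, apply the one-sided bound to the centered squares, and solve the resulting quadratic inequality in $A$. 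This yields $A\le \widehat{A}_n + cB\sqrt{L/n}$ with a small absolute $c$ and \emph{no} multiplicative inflation, after which $\sqrt{2\eta nA^2L}\le\sqrt{2\eta n\widehat{A}_n^2L}+c\sqrt{2\eta}\,BL$ and the additive terms do consolidate into $3B\eta L$. Two smaller issues: the theorem says ``sample variance'' (i.e.\ $n^{-1}\sum(X_i-\bar X_n)^2$) while you analyze the paired-difference estimator, which changes the effective sample size and hence the constants; and your $W_i\in[0,2B^2]$ step silently requires a two-sided bound $|X_i|\le B$, which condition~\eqref{eq:boundedness-conditions} does not supply (the paper itself stresses that one-sided bounds do not permit variance estimation), so the hypothesis needs to be strengthened for any proof along these lines to go through.
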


% \begin{thm}[Adaptive Bernstein]
% If $X_1, X_2, \ldots$ are independent mean-zero random variables satisfying~\eqref{eq:boundedness-conditions} with $A_1 = A_2 = \ldots = A$, then
%     \[
%         \P \left(\exists t\ge1:\, S_t \geq \sqrt{ 2c_tA \log\left(\frac{h(k_t)}{\delta}\right) + \frac{B^2}{9}\log^2\left(\frac{h(k_t)}{\delta}\right)}  + \frac{B}{3} \log\left(\frac{h(k_t)}{\delta}\right) \, \right)
%         \leq \delta, \;  \forall \delta \in [0, 1],
%     \]
%     where $k_t$ and $c_t$ are the constants defined in Theorem~\ref{thm:uniform-in-n}.
% \end{thm}
\section{More Simulations}\label{appsec:more-simulations}
\subsection{Hyperparameters of Stitching}\label{appsec:hyperparam_stitch}
In Section~\ref{sec:unif-conf-seq}, we mentioned that there are two hyperparameters of our stitching methods: (1) the spacing parameter $\eta >
1$ and (2) the power parameter $c > 1$ for the stitching function $h_c(k) = \zeta(c)
(k+1)^c$ where $\zeta(\cdot)$ is the Riemann zeta function.
\clearpage
\begin{figure}[h]
    \centering
    \includegraphics[width=0.5\textwidth]{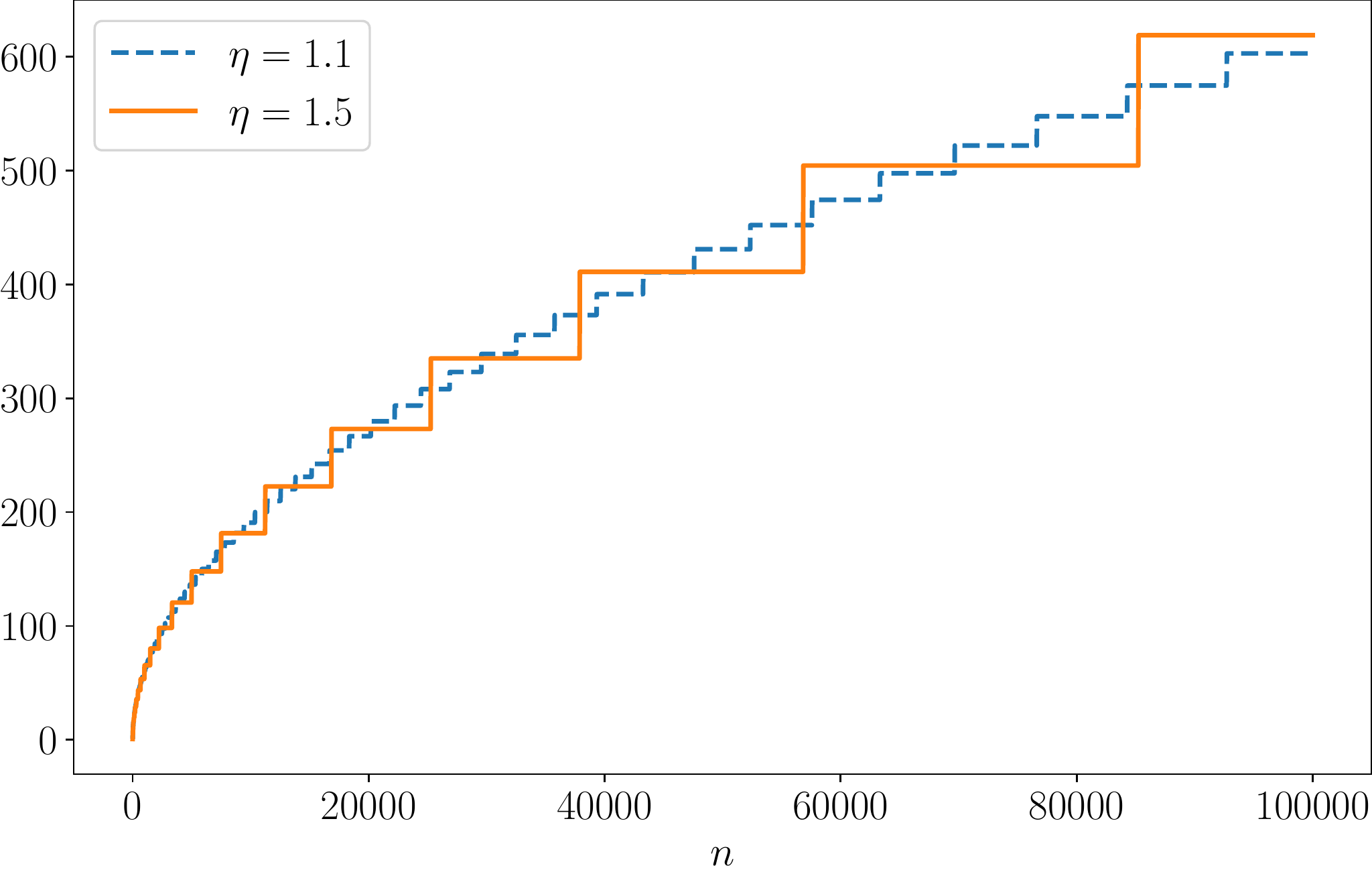}
    \caption{The upper bound of $S_n$ obtained by adaptive Bentkus bound in Theorem~\ref{thm:uniform-in-n} for different values of $\eta$.
        Both the variance $A = \sqrt{3}/4$ and the upper bound $B=3/4$ is known.}
    \label{fig:hyperparam_eta}
\end{figure}
Figure~\ref{fig:hyperparam_eta} illustrates that
the choice of $\eta$ determines how the budget $\delta$ is distributed across different sample sizes.

\begin{figure}[h]
    \centering
    \includegraphics[width=0.45\textwidth]{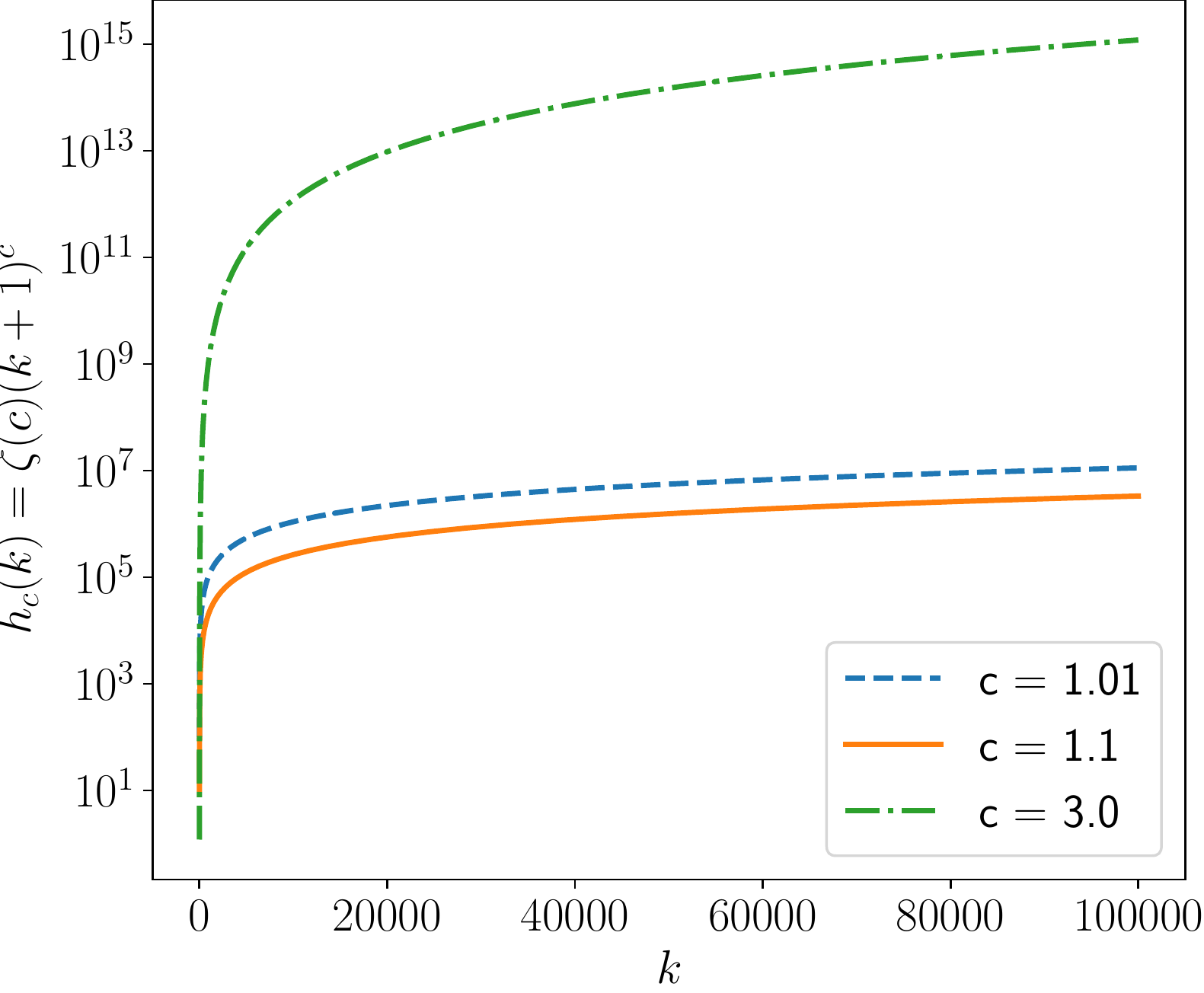}
    \includegraphics[width=0.43\textwidth]{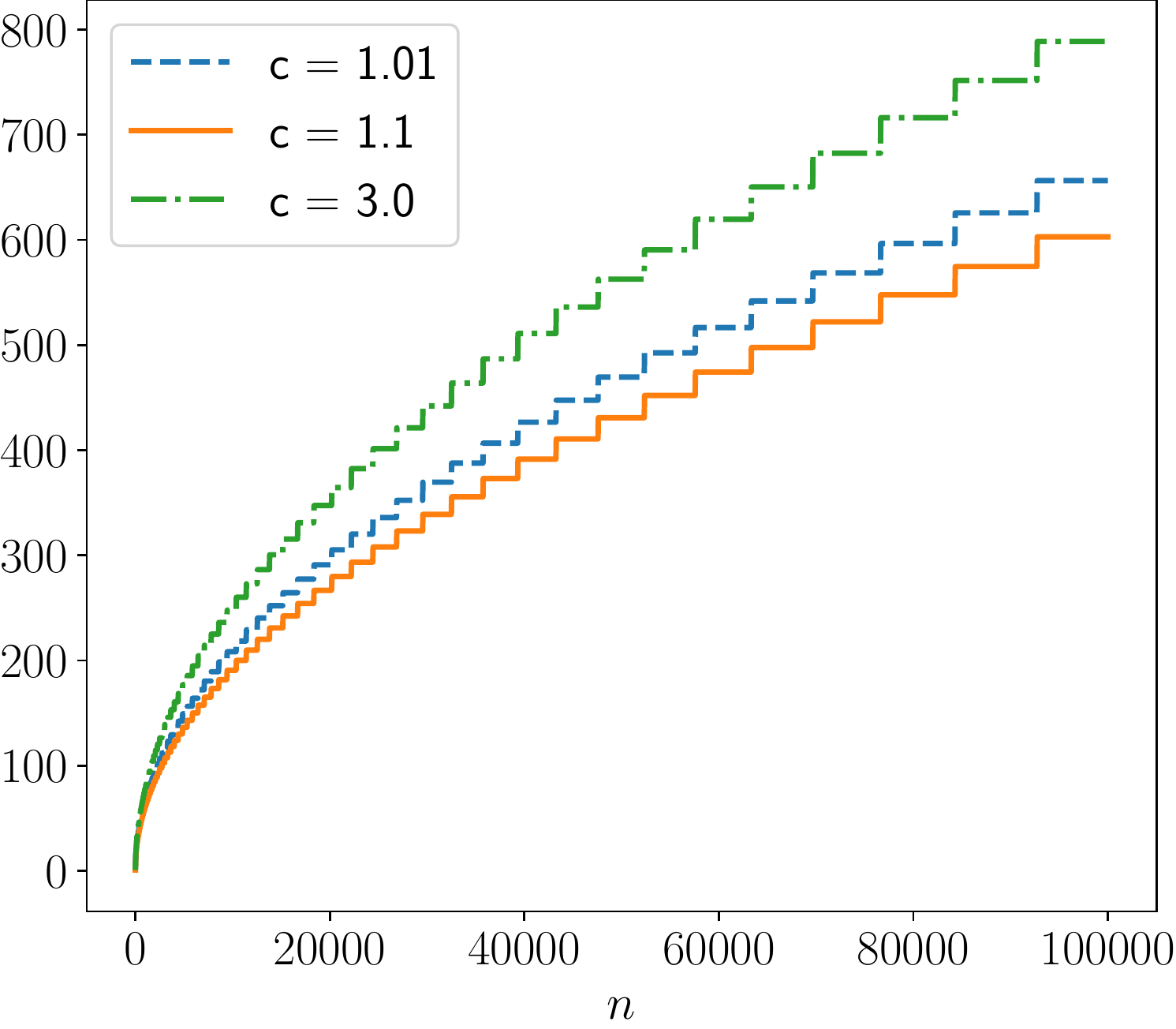}
    \caption{
        \textbf{Left:} The stitching function $h_c(\cdot)$ for different values of $c$.
        \textbf{Right:} The upper bound of $S_n$ obtained by \bk with different values
        of $c$. Both the variance $A^2 = 3/16$ and the upper bound $B=3/4$ is known.
    }
    \label{fig:hyperparam_power}
\end{figure}
Figure \ref{fig:hyperparam_power} shows both the stitching function $h_c(\cdot)$
and corresponding upper bound \bk obtains. For a fixed sample size $n$,
the bigger $h_c(k_n)$ is, the smaller budget
$\delta / h_c(k_n)$ it obtains and hence it needs a larger upper bound.
Hence, the faster $h_c(\cdot)$ grows, the more conservative upper bound (and
corresponding, wider confidence interval) one will get.

\subsection[Confidence Sequence Unbiased Coin Toss]{Confidence Sequence for $\Bern{0.5}$}
In this section, we present a comparison of our confidence sequence with \ah, \eb,
\ho, and \hog on synthetic data from $\Bern{0.5}$. In this case, $Y_1, Y_2, \ldots\sim\Bern{0.5}$ and the variance is $1/4$. Hence in this case Hoeffding's inequality is sharp and nothing can be gained by variance exploitation. We observe this very fact in our experiment, where our method behaves as well as \ah for moderate to large sample sizes. Figures~\ref{fig:one-replication_phalf} and~\ref{fig:multi-replications_phalf} show the comparison of confidence sequences in one replication and comparison of average width over 1000 replications. As in the case of $\Bern{0.1}$ (Section~\ref{sec:comparison-with-conf-seq}), for small sample sizes, \ah and \bk behave very closely and are better than all other methods but for $n$ moderately large, the sharpness of \bk clearly pays off by outperforming \ah and all other methods.
\begin{figure}[h]
        \centering
        \begin{subfigure}{.49\textwidth}
            \includegraphics[width=0.95\textwidth]{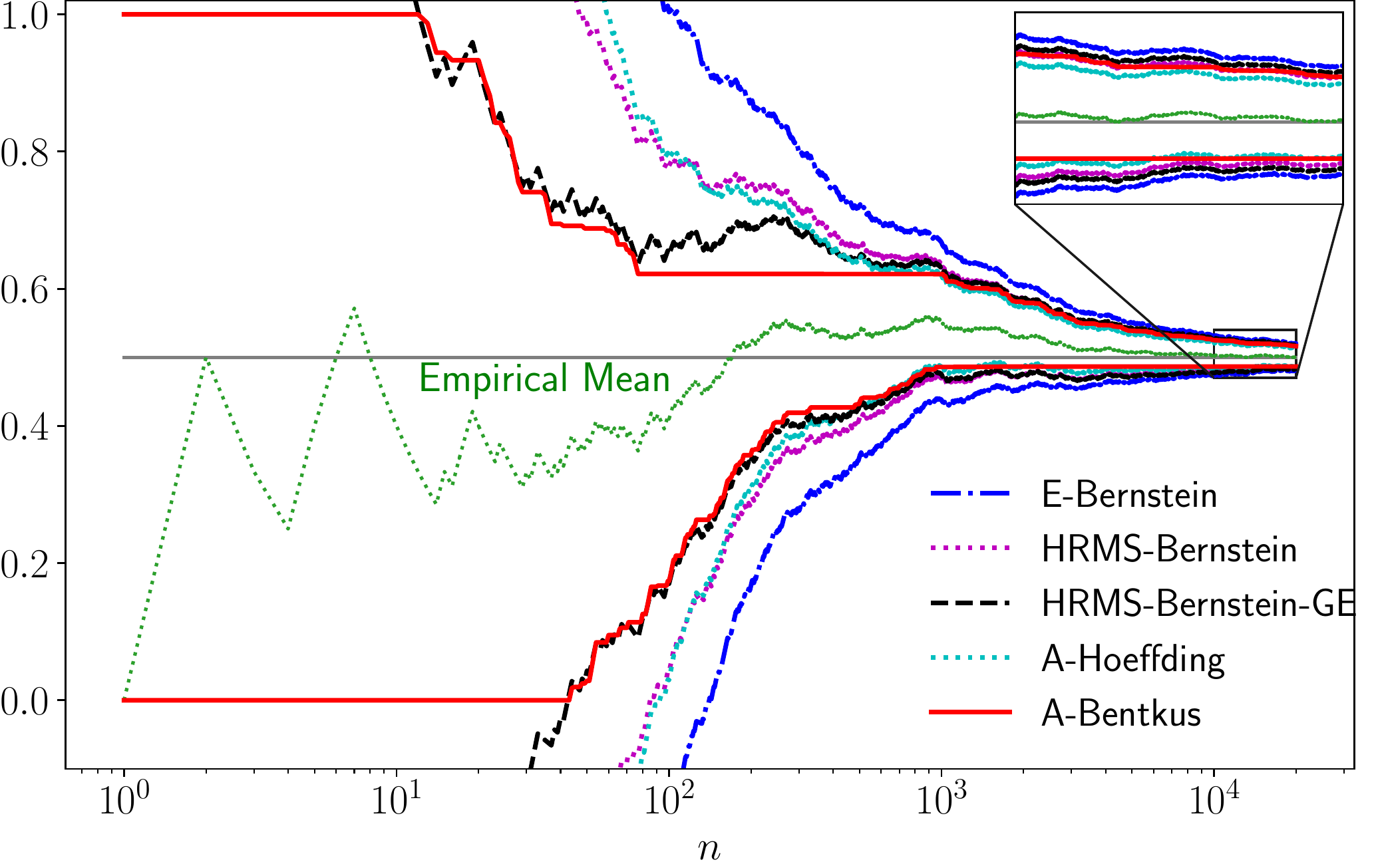}
        \caption{One Replication}
        \label{fig:one-replication_phalf}
        \end{subfigure}
        \begin{subfigure}{.49\textwidth}
        \includegraphics[width=0.95\textwidth]{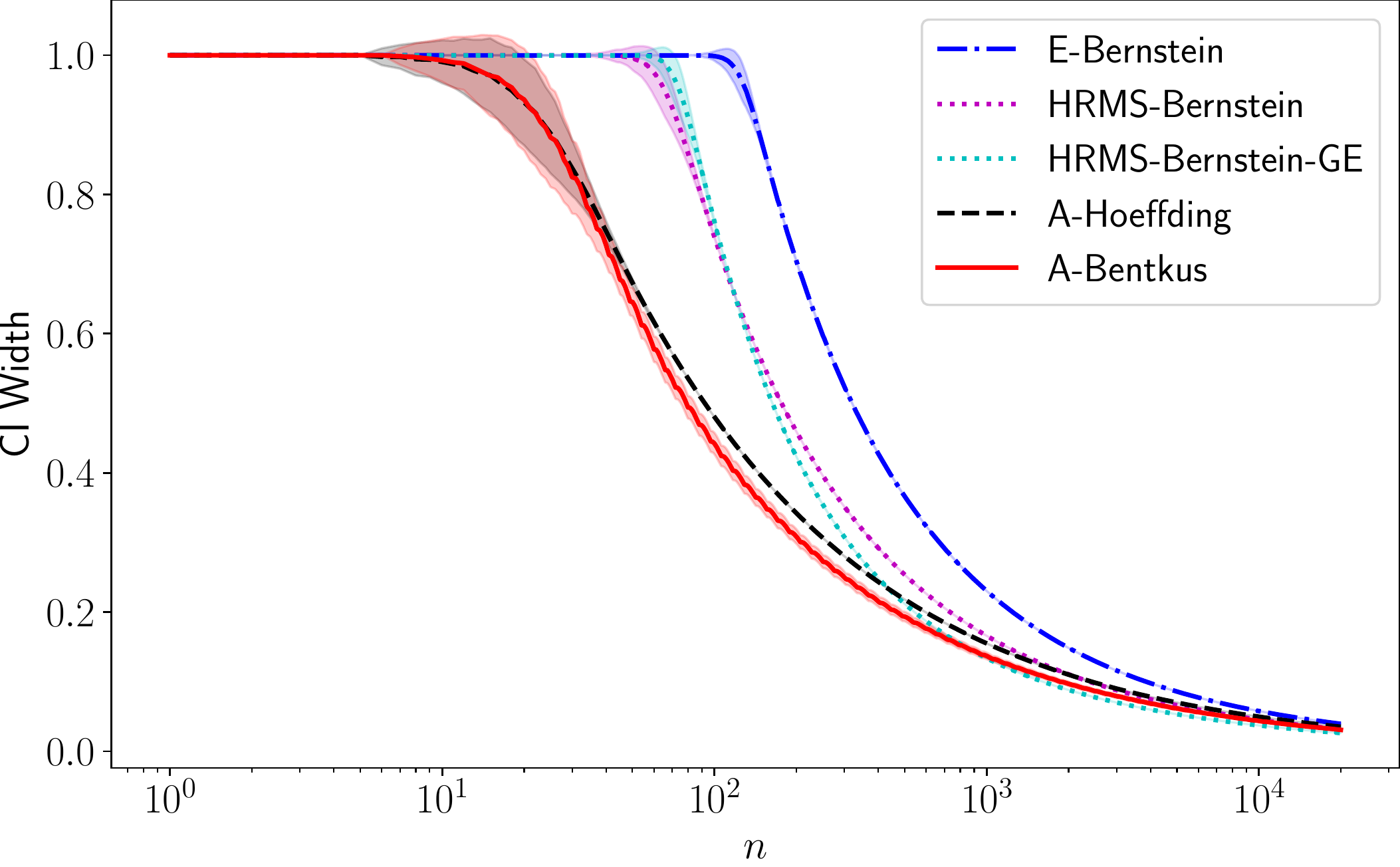}
        \caption{Average Width over 1000 Replications}
        \label{fig:multi-replications_phalf}
        \end{subfigure}
        \caption{
            Comparison of the 95\% confidence sequences for the mean when $Y_i \sim \Bern{0.5}$. Except \ah, all other methods estimate the variance. \bk is the confidence sequence in~\eqref{eq:empirical-bentkus-conf-seq}. \hog involves a tuning parameter $\rho$ which is chosen to optimize the boundary at $n = 500$.
            \textbf{(a)} shows the confidence sequences from a single replication.
            \textbf{(b)} shows the average widths of the confidence sequences over 1000 replications.
            The upper and lower bounds for all the other methods are cut at $1$ and $0$
            for a fair comparison. The failure frequency is $0.001$ for \hog and $0$
            for the others.
            % In both cases, we see that \bk has smaller width.
        }
\end{figure}

% \subsection{Confidence Sequence for $\text{Beta}(\alpha=5, \beta=2)$}
% \qq{Add plots here.}
\subsection{Discussion for the Best Arm Identification Problem}\label{appsec:best_arm}
In Section~\ref{sec:bandit}, we mentioned that a confidence sequence for which the radius $R_{\alpha}$ stays constant for a stretch of samples yields a larger sample complexity. We present here more experimental details regarding this behavior. 

In the following, we experiment with a single instance of best arm identification problem
where the number of arms is 2 (i.e., $K=2$). The expected rewards are generated as the same as in Section~\ref{sec:bandit}, so that Arm 0 has mean $\mu_0 = 1$ is the best arm, and Arm 1 has mean $\mu_1 \approx 0.34$. For all the methods, we use the same data.

\begin{figure}[H]
\centering
\begin{subfigure}{.49\textwidth}
\centering
    \includegraphics[width=0.95\textwidth]{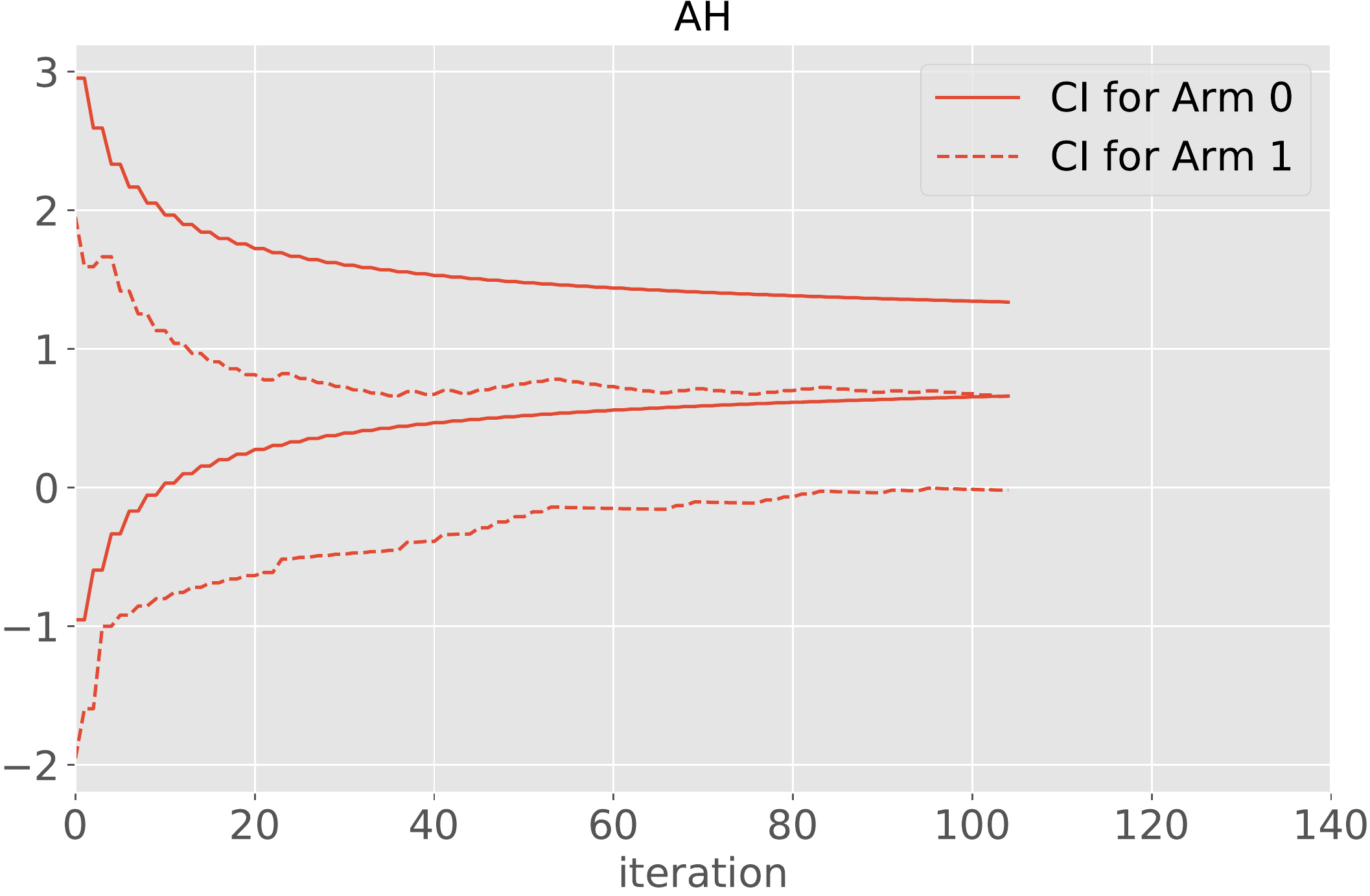}
\caption{Confidence intervals of \ah for two arms.}
\label{fig:best_arm_confseq_ah}
\end{subfigure}
\begin{subfigure}{.49\textwidth}
    \centering
    \includegraphics[width=0.95\textwidth]{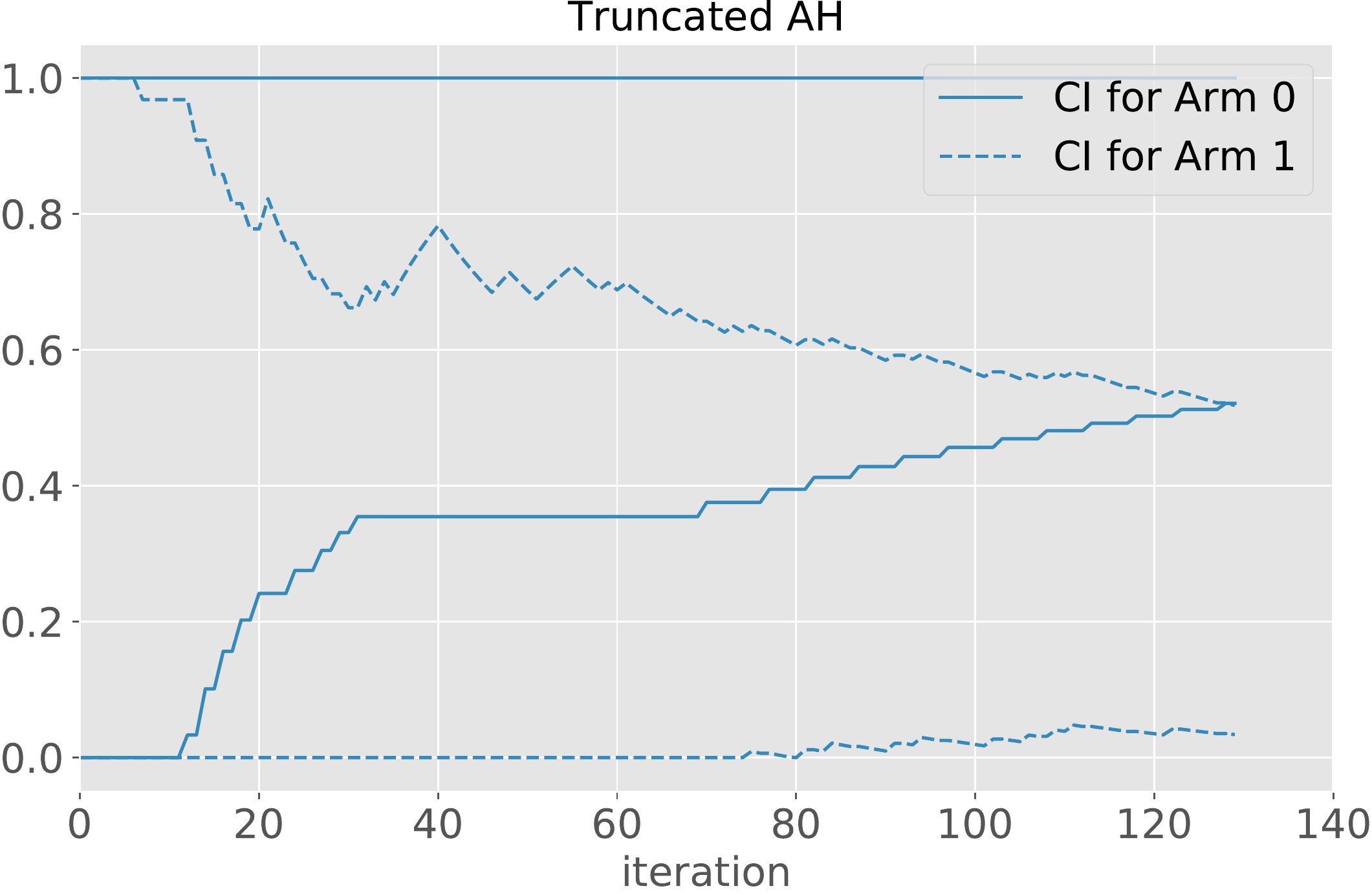}
    \caption{Confidence intervals of truncated \ah.} % for two arms.}
\label{fig:best_arm_confseq_ah_truncated}
\end{subfigure}
\begin{subfigure}{.49\textwidth}
    \centering
    \includegraphics[width=0.95\textwidth]{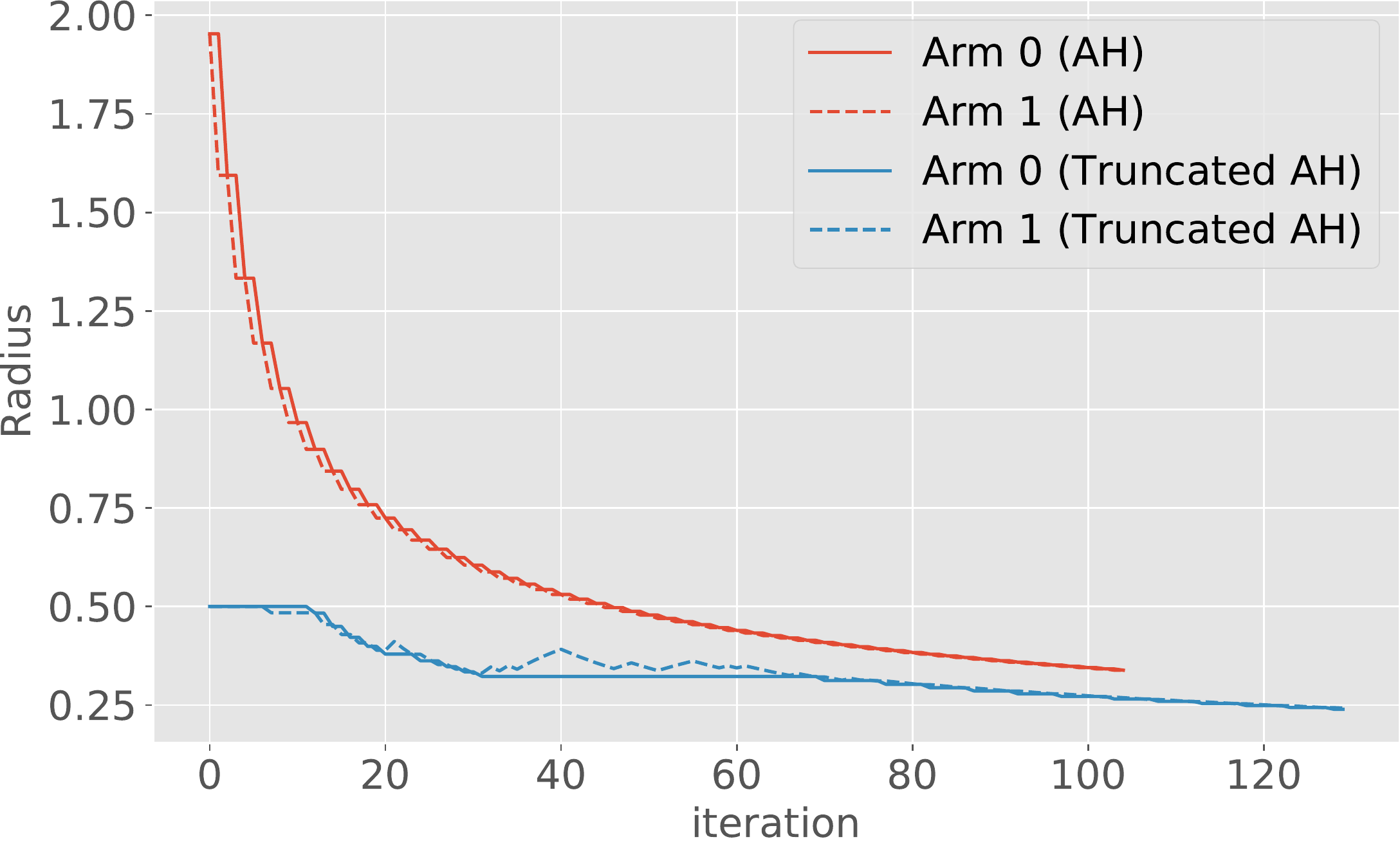}
    \caption{Radius of \ah (original and truncated) for two arms.}
\label{fig:best_arm_radius_ah.pdf}
\end{subfigure}
\begin{subfigure}{.49\textwidth}
    \centering
    \includegraphics[width=0.95\textwidth]{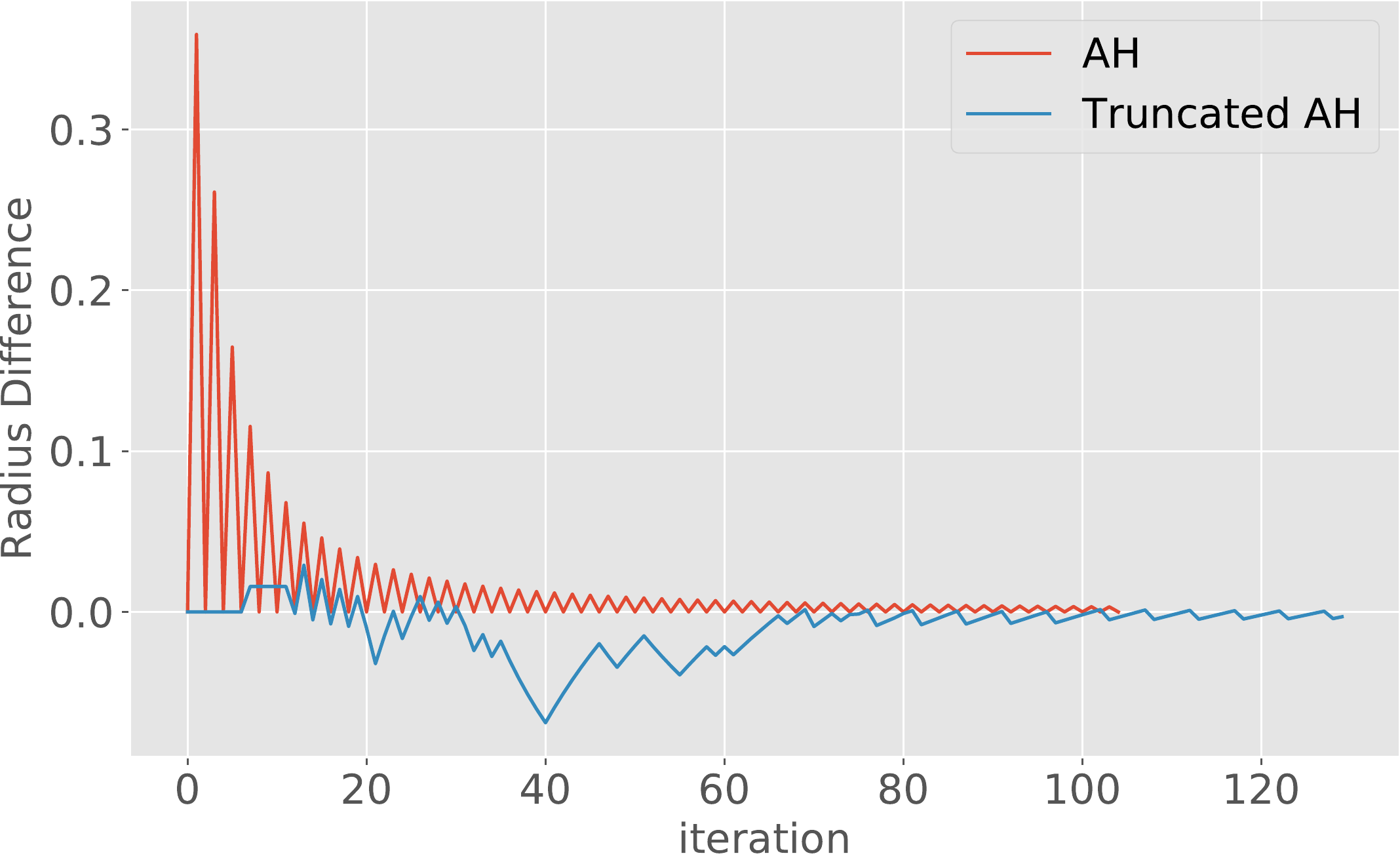}
    \caption{The difference of the radius for two arms: $R_0 - R_1$. For positive difference value, Arm 0 will be pulled. For negative difference
    value, Arm 1 will be pulled.}
\label{fig:best_arm_radius_diff_ah}
\end{subfigure}
\begin{subfigure}{.49\textwidth}
    \centering
    \includegraphics[width=0.95\textwidth]{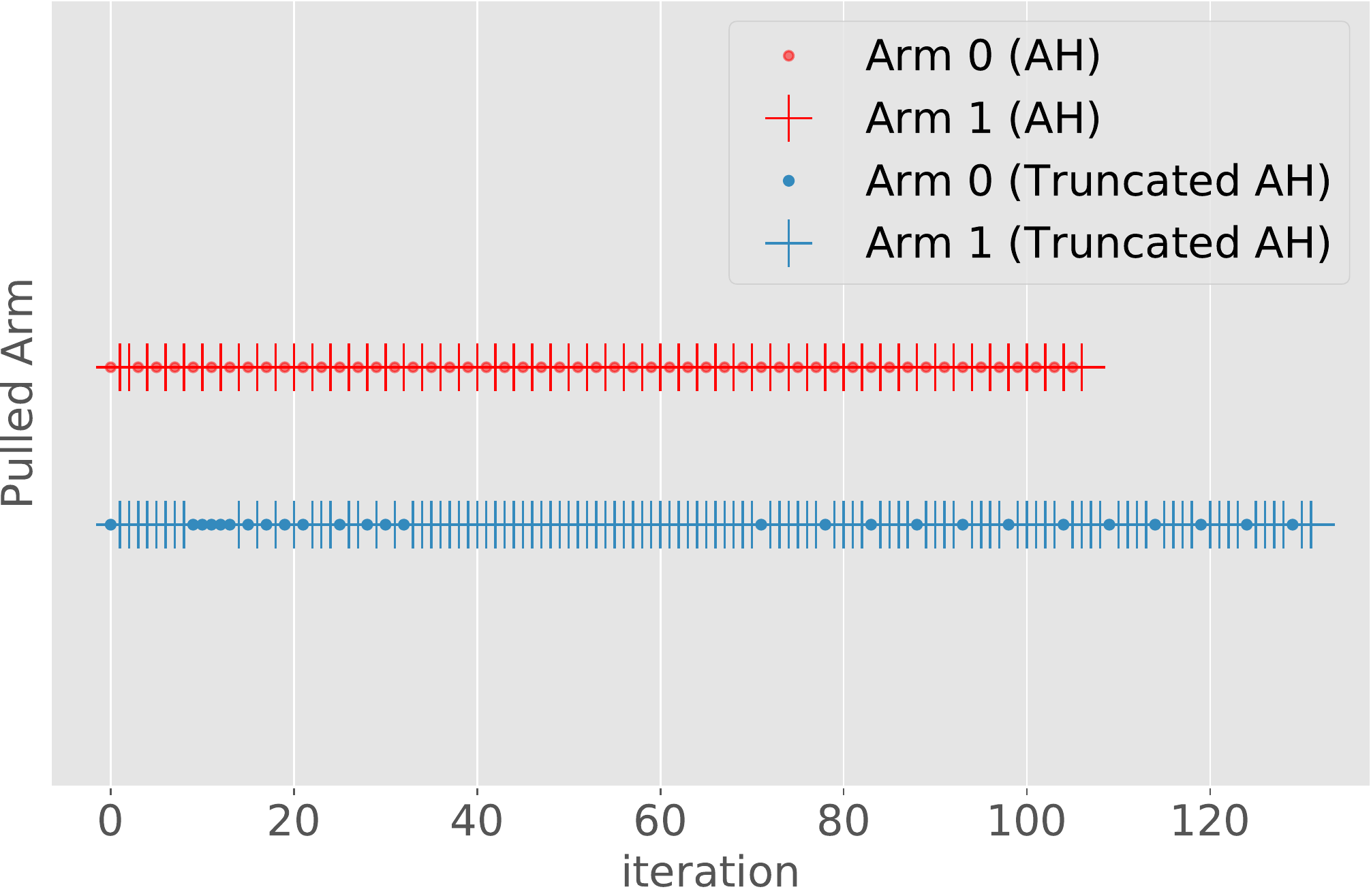}
    \caption{The arms pulled at each iteration. `$\cdot$': Arm 0 is pulled. `+': Arm 1 is pulled. \ah and truncated \ah are marked in 
    red and blue, respectively. }
\label{fig:best_arm_pulled_arm_ah}
\end{subfigure}
\caption{Identify the best arm out of two using \ah and its truncated variant.}
\label{fig:best_arm_ah}

\end{figure}

We first explain this phenomenon using \ah and its truncated variant. 
% For these illustrations, 
% Moreover, the rewards at each pull are Bernoulli with means $\mu_0$ and $\mu_1$, respectively, for Arm 0 and $1$.
\ah can result in confidence intervals that are larger than $[0, 1]$. In the truncated version of \ah, the upper confidence term of a confidence interval 
will be capped at $1$, and the lower confidence term will be cut at $0$, so that all the confidence intervals stay in $[0, 1]$ throughout the experiment.  We shall see that the truncated variant
would result in stationary radius and yield larger sample complexity compared with \ah.

Figures~\ref{fig:best_arm_confseq_ah} and~\ref{fig:best_arm_confseq_ah_truncated} show the confidence intervals of each arm at each iteration, when \ah and truncated \ah are plugged into Algorithm~\ref{alg:bestarm}. 
The algorithm will stop when the confidence intervals of the two arms completely separate (i.e., the lower bound of Arm 0 goes above the upper bound of Arm 1). Figure~\ref{fig:best_arm_confseq_ah} and~\ref{fig:best_arm_confseq_ah_truncated} show that
\ah used 107 iterations, while the truncated \ah used 132 iterations. 
One can observe that in the initial stage of the algorithm, the confidence interval, without truncation, will likely get updated once a sample adds in, which does not hold for the truncated version; compare the first 15 iterations in Figures~\ref{fig:best_arm_confseq_ah} and~\ref{fig:best_arm_confseq_ah_truncated}.
Therefore, the radius will not get updated for truncated \ah, as shown in Figure~\ref{fig:best_arm_radius_ah.pdf}. 
Recall that  Algorithm~\ref{alg:bestarm} samples an arm with largest radius; when both radii are same, we sample the arm with smaller empirical mean. 
Due to the stationary radius,  in those iterations, truncated \ah keeps sampling the same arm 
till an update happens. 

In Figure~\ref{fig:best_arm_radius_diff_ah}, we plot the difference between the radius for Arm 0 and Arm 1: $R_0 - R_1$.  Arm 0 will be sampled if this value is positive and vice versa. Again, if $R_0$ is equal to $R_1$, we shall sample the arm with lower empirical mean.
We can see the difference fluctuates evenly for \ah, so that \ah almost alternatively samples each arm, and the confidence intervals of both arms gets updated alternatively as shown in Figure~\ref{fig:best_arm_confseq_ah}. In contrast,
for truncated \ah, the difference consistently stays above or below zero for some time, which means the same arm gets sampled. See Figure~\ref{fig:best_arm_pulled_arm_ah} for the arms pulled at each iteration; the `+' and `$\cdot$' appear almost side-by-side with \ah and they appear disproportionately with truncated \ah. 

As mentioned, Algorithm~\ref{alg:bestarm} stops when the two confidence intervals separate, and it is not crucial for those intervals to be shorter.
Hence, it will stop fast if (i) the confidence interval gets updated by every sample and (ii) the updates are significant for 
small number of samples (the early stage). Truncated \ah underperforms in both aspects. This is also the reason why the Berstein type of confidence sequences underperforms \ah in this problem (c.f. Section~\ref{sec:bandit}). Even though they are shorter for larger samples; \ah is better with smaller samples.

Next, we investigate the performance for Bentkus type of methods. 
% and then using \bk (used in Section~\ref{sec:bandit}) and its original (truncated) variant. 
We write \bk to be the variant from Section~\ref{sec:bandit}, that is,
we output confidence interval $\{[\mu_n^{\low*}, \mu_n^{\up*}], n\ge 1\}$ as in Theorem~\ref{thm:empirical-Bentkus-arbitrary-mean},
but output radius $R_n = \mu_n^{\up} - \mu_n^{\low}$.

We write original \bk to be the one directly from Theorem~\ref{thm:empirical-Bentkus-arbitrary-mean}, i.e.,
we output confidence interval $\{[\mu_n^{\low*}, \mu_n^{\up*}], n\ge 1\}$ and radius $R_n = \mu_n^{\up*} - \mu_n^{\low*}$.
Note that $\mu_n^{\up*} = \min_{1 \leq i \leq n} \mu_i^{\up}$ is the cumulative minimum, which essentially serves as the truncation
of the upper confidence term, and similarly does the $\mu_n^{\low*}$.
We refer the readers to Theorem~\ref{thm:empirical-Bentkus-arbitrary-mean} for the details. 
Similar to the previous experiment, we shall see
that the original \bk results in a larger sample complexity than \bk. Figure~\ref{fig:best_arm_confseq_bk} presents the results.

 \begin{figure}[H]
 \centering
 \begin{subfigure}{.49\textwidth}
     \centering
     \includegraphics[width=0.95\textwidth]{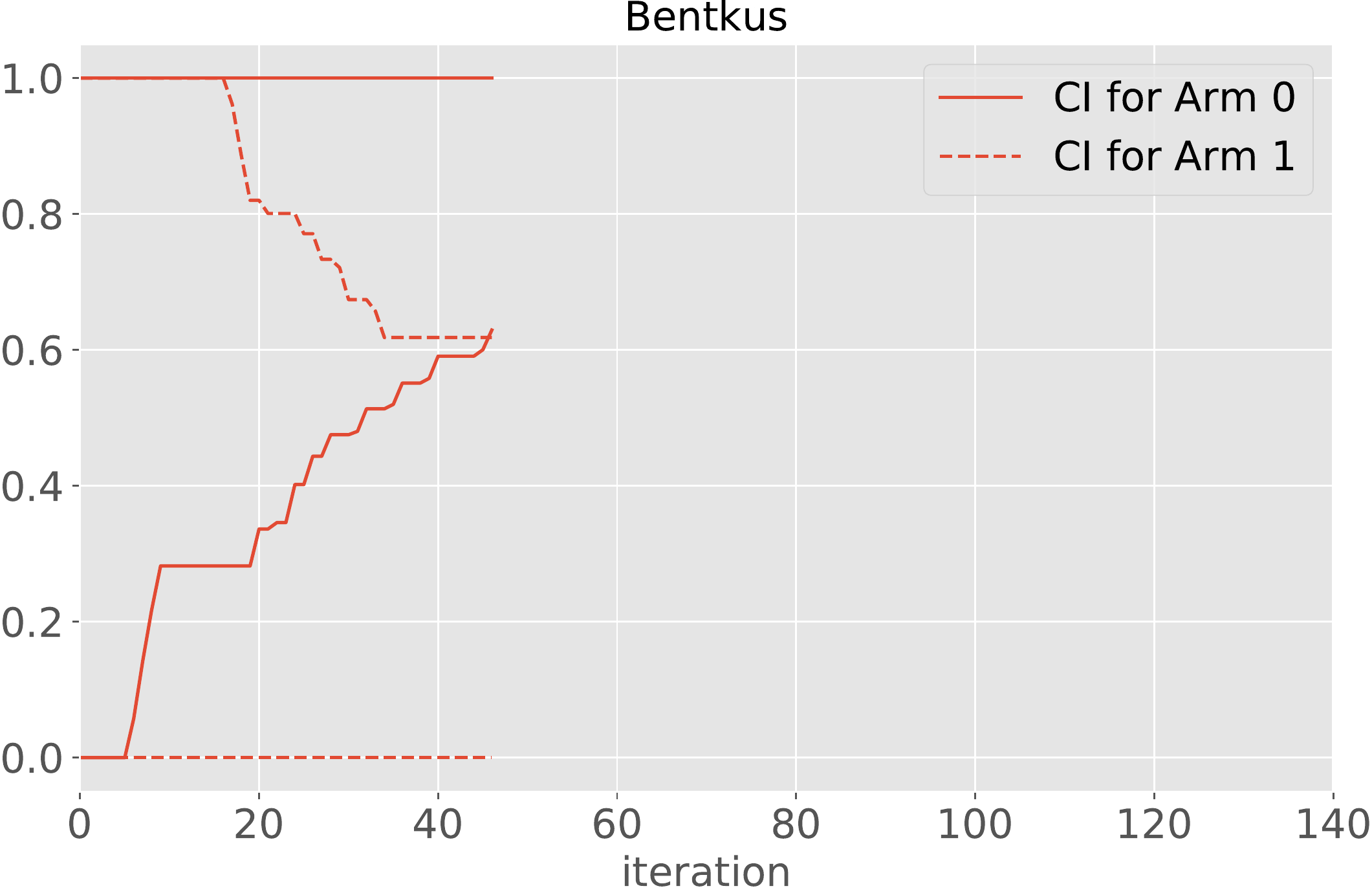}
     \caption{Confidence intervals of \bk (variant) for two arms.}
     \label{fig:best_arm_confseq_bk}
 \end{subfigure}
 \begin{subfigure}{.49\textwidth}
     \centering
     \includegraphics[width=0.95\textwidth]{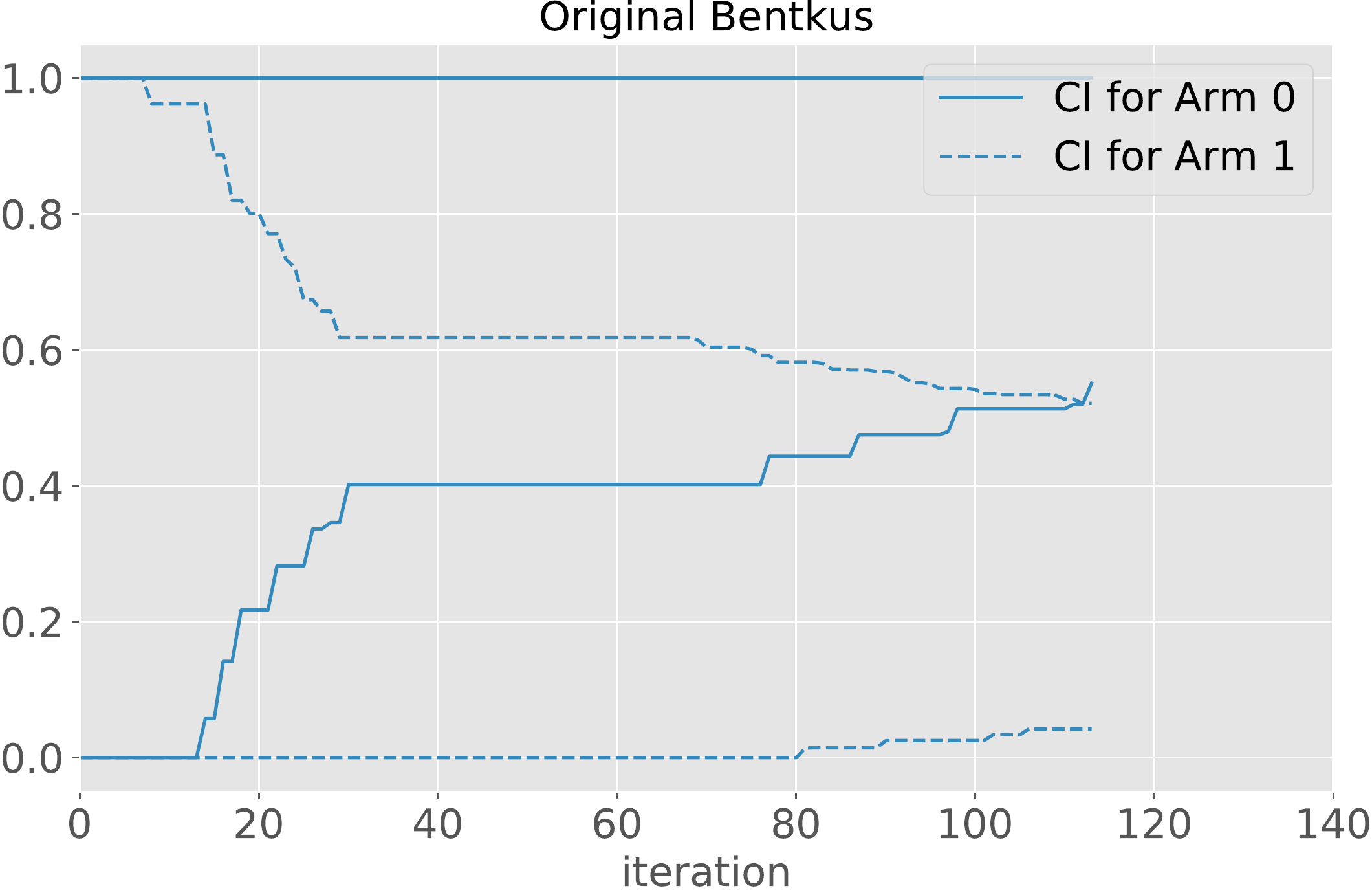}
     \caption{Confidence intervals of \bk (original) for two arms.}
     \label{fig:best_arm_best_arm_confseq_bk_truncated}
 \end{subfigure}
 
 \begin{subfigure}{.49\textwidth}
 \centering
     \includegraphics[width=0.95\textwidth]{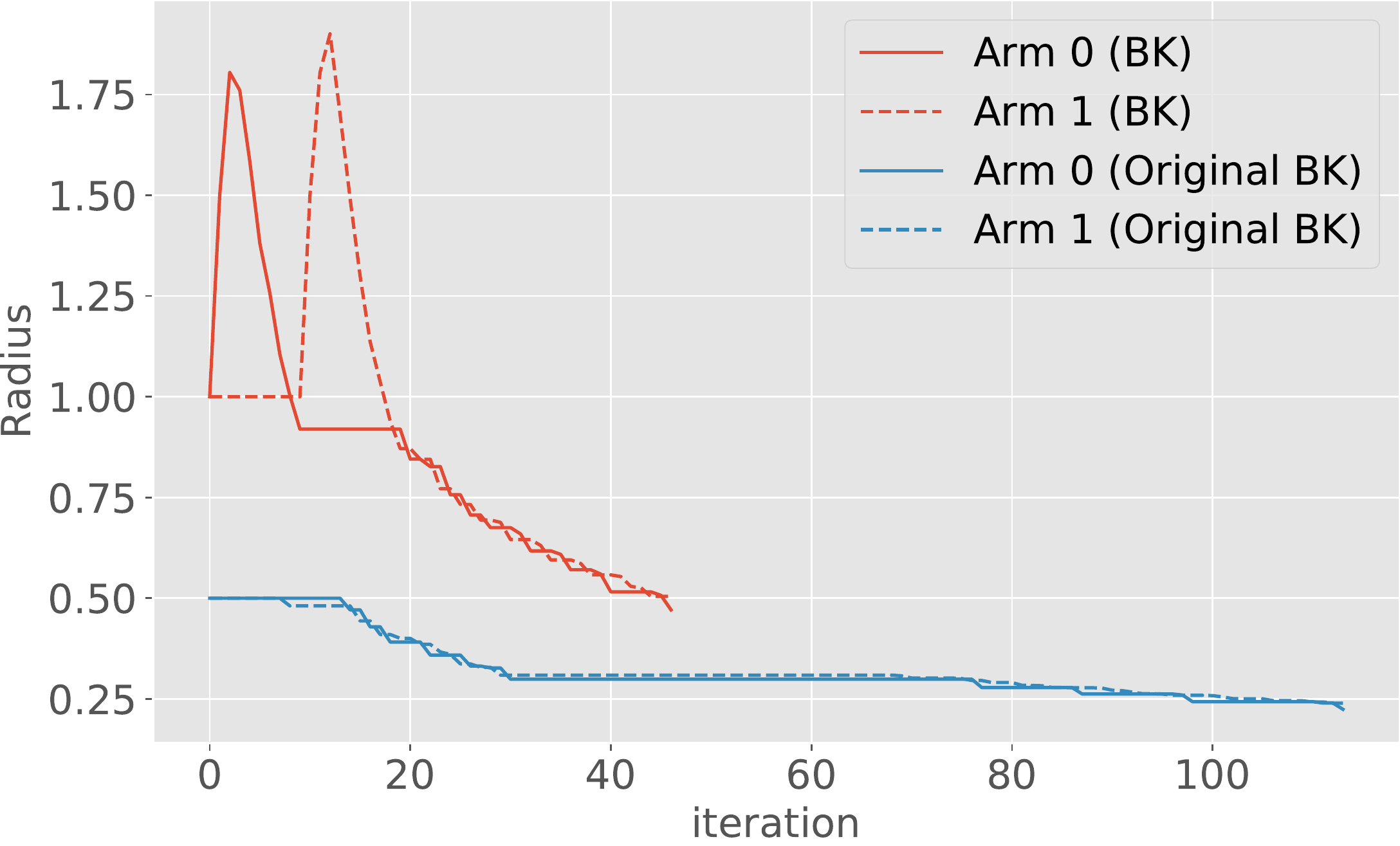}
     \caption{Radius of \bk (original and variant) for two arms.}
     \label{fig:best_arm_raidus_bk}
 \end{subfigure}
 \begin{subfigure}{.49\textwidth}
     \centering
     \includegraphics[width=0.95\textwidth]{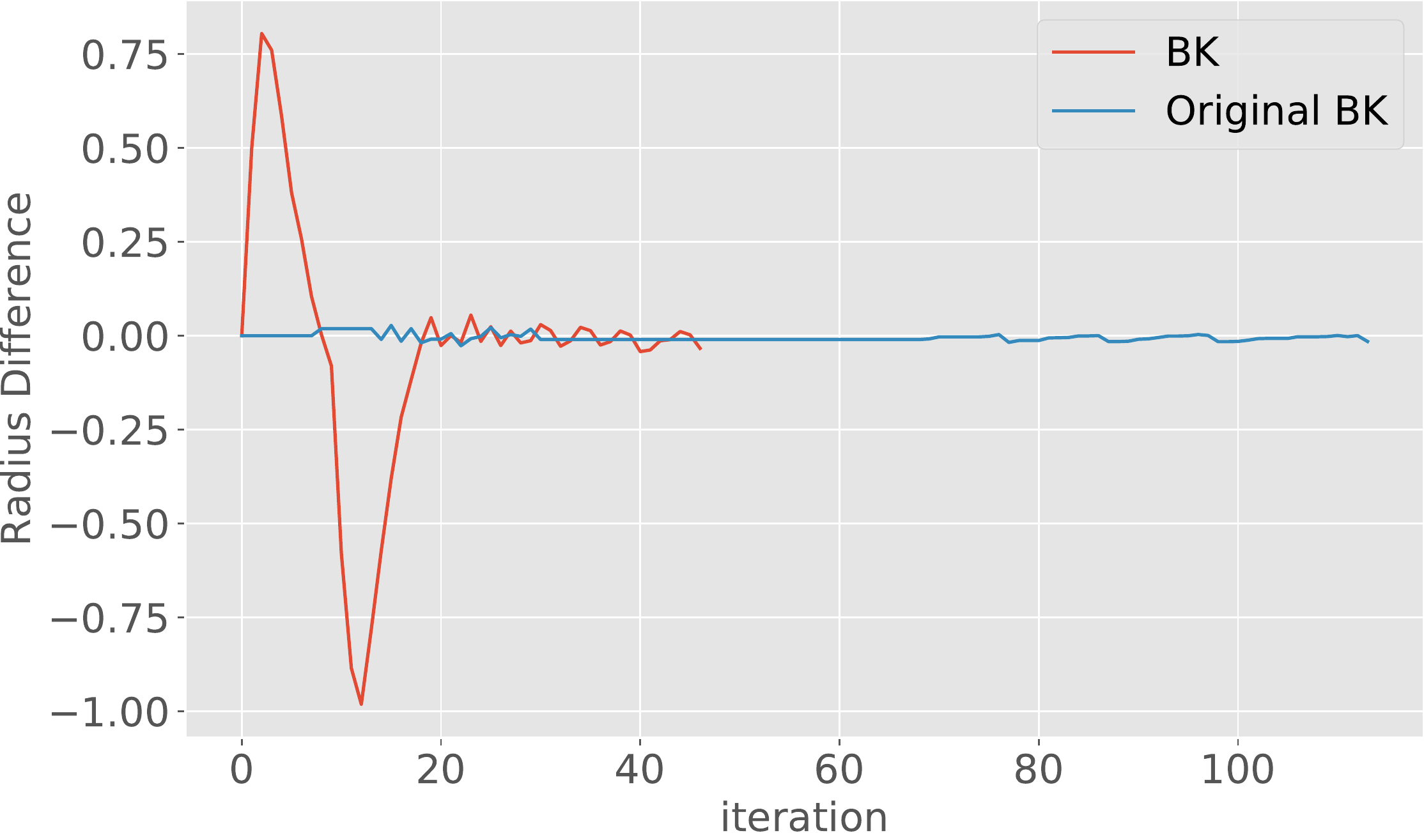}
     \caption{The difference of the radius for two arms: $R_0 - R_1$. For positive difference value, Arm 0 will be pulled. For negative difference
    value, Arm 1 will be pulled.}
     \label{fig:best_arm_raidus_diff_bk}
 \end{subfigure}
 
 \begin{subfigure}{.49\textwidth}
     \centering
     \includegraphics[width=0.95\textwidth]{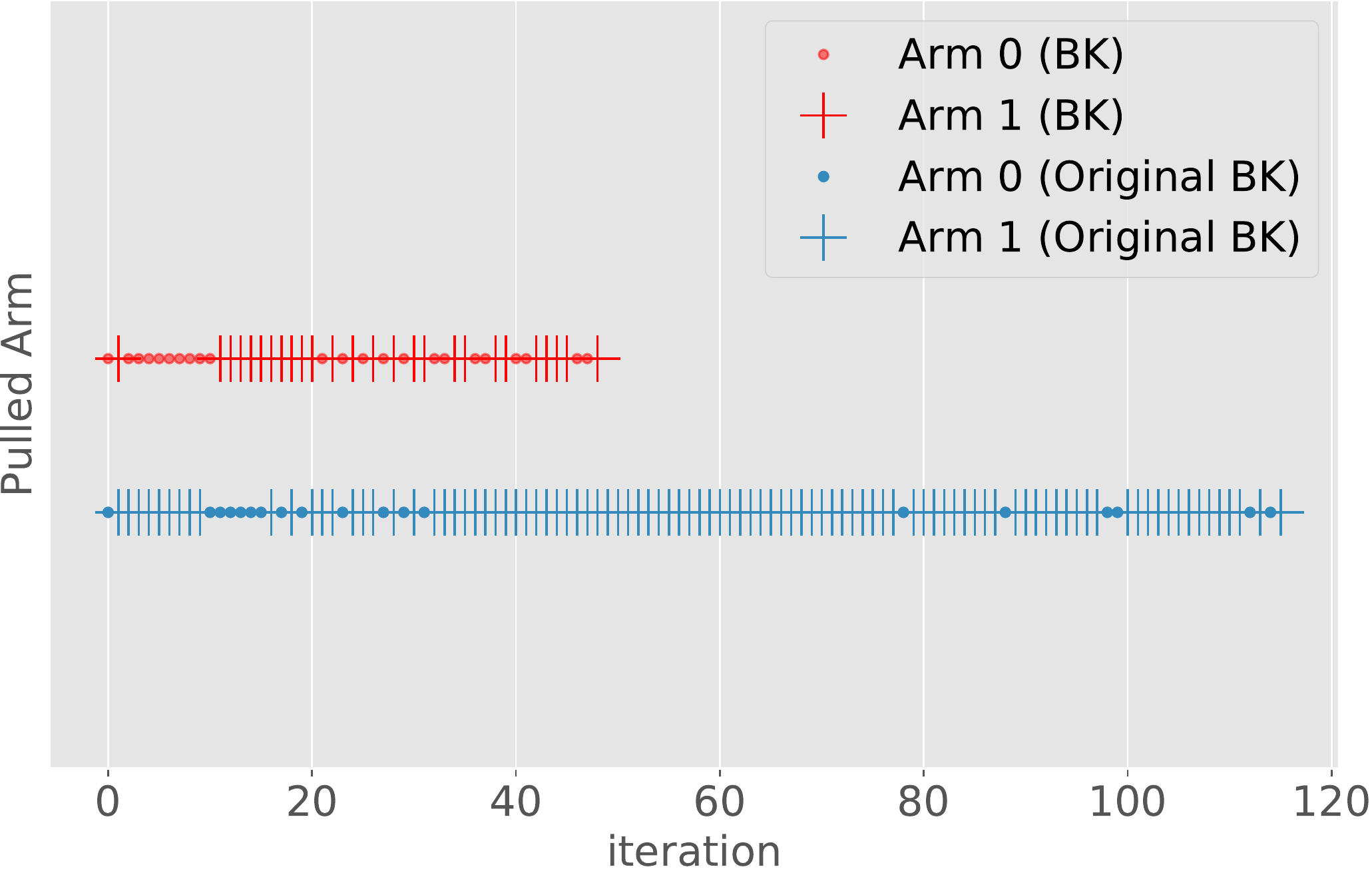}
     \caption{The arms pulled at each iteration. `$\cdot$': Arm 0 is pulled. `+': Arm 1 is pulled. \bk and original \bk are marked in 
   red and blue, respectively. }
     \label{fig:best_arm_pulled_arm_bk}
 \end{subfigure}
 \caption{Identify the best arm out of two using original \bk and the variant introduced in Section~\ref{sec:bandit}.}
 \label{fig:best_arm_bk}
\end{figure}
% \begin{equation}\label{eq:original-bentkus-bandit-appendix}
% \{[\mu_n^{\low*}, \mu_n^{\up*}], n\ge1\},\quad\mbox{with radius}\quad R_n = \mu_n^{\up*} - \mu_n^{\low*}.
% \end{equation}
% 
% \begin{equation}\label{eq:bentkus-bandit-appendix}
% \{[\mu_n^{\low*}, \mu_n^{\up*}], n\ge 1\},\quad\mbox{with radius}\quad R_n = \mu_n^{\up} - \mu_n^{\low}.
% \end{equation}
% This is untruncated in the radius.

% Recall the notation of \bk and original \bk from~\eqref{eq:bentkus-bandit-appendix} and~\eqref{eq:original-bentkus-bandit-appendix}. The results are presented in Figure~\ref{fig:best_arm_bk}.
% To be consistent
% with Section~\ref{sec:bandit}, we use \bk to denote the variant we proposed: output the same interval as in Theorem~\ref{thm:empirical-Bentkus-arbitrary-mean} but compute its radius as $\mu_{n_\alpha}^{\up} - \mu_{n_\alpha}^{\low}$.
% We shall refer to the one completely follows Theorem~\ref{thm:empirical-Bentkus-arbitrary-mean} the original \bk.
% The results are presented in Figure~\ref{fig:best_arm_bk}. 

Patterns similar to the \ah and its truncated version happen here too. Although \bk keeps sampling the same arm in the beginning phase, it alternates the samples in the later stage. Comparing Figures~\ref{fig:best_arm_pulled_arm_ah} (\ah) and~\ref{fig:best_arm_pulled_arm_bk} (\bk), the sampling pattern of \ah is more uniform, however, \bk still outperforms \ah due to its fast convergence.

\section{Computation of $q(\delta; n, \mathcal{A}, B)$}\label{appsec:computation-of-q-function}
In this section we provide some details on the computation of $q(\delta; n, \mathcal{A}, B)$ based on~\citet{bentkus2004hoeffding} and~\citet{pinelis2009bennett}. We will restrict to the case where $A_1 = A_2 = \cdots = A_n = \cdots = A$.

For any random variable $\eta$, define
\[
P_2(u; \eta) ~:=~ \inf_{x \le u}\frac{\mathbb{E}[(\eta - x)_+^2]}{(u - x)_+^2}.
\]
For any $A, B$, set $\pab = {A^2}/{(A^2+B^2)}$.
Define Bernoulli random variables $R_1, R_2, \ldots, R_n$ as
\[
    \mathbb{P}(R_i = 1) ~=~ \pab ~=~ 1 - \mathbb{P}(R_i = 0).
\]
Set $Z_n = \sum_{i=1}^n R_i$. $Z_n$ is a binomial random variables
with $n$ trials and success probability $\pab$: $Z_n \sim \mbox{Bi}(n, \pab)$.
For $0 \le k\le n$, define
\begin{align*}
    p_{k} ~&:=~ \mathbb{P}\left(Z_n \ge k\right),\quad e_{k} ~:=~ \mathbb{E}\left[Z_n\mathds{1}\left\{Z_n \ge k\right\}\right],\quad v_{k} ~:=~ \mathbb{E}\left[Z_n^2\mathds{1}\left\{Z_n \ge k\right\}\right].
\end{align*}
\begin{prop}
For all $u\in\mathbb{R}$,
\[
    P_2\left(u; \sum_{i=1}^n G_i\right) ~=~ P_2\left(\frac{Bu + nA^2}{A^2 + B^2};\, Z_n \right) ~=~ P_2\left(\frac{Bu + nA^2}{A^2 + B^2}; Z_n\right).
\]
Furthermore, for any $x\ge 0$ and $1\le k \le n-1$,
\[
P_2\left(x; Z_n\right) ~=~
\begin{cases}
    1,  &   \mbox{if }x ~\le~ n \pab,\\
    \frac{n\pab(1 - \pab)}{(x - n\pab)^2 + n\pab(1 - \pab)}, &\mbox{if }n \pab < x \le \frac{v_0}{e_0},\\
    \frac{v_{k}p_{k} - e_{k}^2}{x^2p_{k} - 2xe_{k} + v_{k}}, &\mbox{if }\frac{v_{k-1} - (k-1)e_{k-1}}{e_{k-1} - (k-1)p_{k-1}} < x \le \frac{v_k - ke_k}{e_{k} - kp_k},\\
    \mathbb{P}\left(Z_n = n\right) = \pab^n, &\mbox{if }x~\ge~ \frac{v_{n-1} - (n-1)e_{n-1}}{e_{n-1} - (n-1)p_{n-1}} = n.
\end{cases}
\]
Formally, we can set $P_2(x; Z_n) = 0$ for all $x > n$ because $\mathbb{P}(Z_n > n) = 0$.
\end{prop}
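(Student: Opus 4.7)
The plan is to establish the two identities separately: the first by an affine change of variables turning $\sum_{i=1}^n G_i$ into a rescaled $Z_n$, and the second by calculus on the piecewise quadratic function $h(s) := \mathbb{E}[(Z_n - s)_+^2]$. For the first identity, from~\eqref{eq:G_distribution} one checks $G_i = c\, R_i - A^2/B$ with $c := (A^2 + B^2)/B > 0$ and $R_i \sim \Bern{\pab}$, by inspecting the two values of $R_i$. Hence $\sum_{i=1}^n G_i = c\, Z_n + d$ with $d := -nA^2/B$. Substituting into the definition of $P_2$ and changing variables $y = (x - d)/c$, $v = (u - d)/c$ (monotone since $c > 0$) factors a common $c^2$ from the numerator $\mathbb{E}[(\sum_{i=1}^n G_i - x)_+^2] = c^2\,\mathbb{E}[(Z_n - y)_+^2]$ and the denominator $(u - x)_+^2 = c^2 (v - y)_+^2$, yielding
\[
P_2\!\left(u;\, \textstyle\sum_{i=1}^n G_i\right) \,=\, \inf_{y \le v}\,\frac{\mathbb{E}[(Z_n - y)_+^2]}{(v - y)_+^2} \,=\, P_2(v;\, Z_n), \qquad v \,=\, \frac{Bu + nA^2}{A^2 + B^2}.
\]

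For the piecewise formula, $h$ is convex, $C^1$, and piecewise quadratic: on each piece $[k-1, k)$ for $k \ge 1$ (and on $(-\infty, 0)$ for $k = 0$ with $p_0 = 1$), $h(s) = v_k - 2 s e_k + s^2 p_k$. Setting the derivative of $F(s) := h(s)/(x - s)^2$ to zero on a single piece reduces to the linear equation $(p_k x - e_k)\, s = e_k x - v_k$, with unique root $s_k^*(x) = (e_k x - v_k)/(p_k x - e_k)$ and value
\[
F(s_k^*(x)) \,=\, \frac{p_k v_k - e_k^2}{p_k x^2 - 2 x e_k + v_k}.
\]
Elementary algebra, using that $e_k - k p_k = \mathbb{E}[(Z_n - k)\mathbf{1}\{Z_n \ge k\}] > 0$ for $k < n$, shows $s_k^*(x) \in [k-1, k)$ iff $\Psi_{k-1} < x \le \Psi_k$; analogously $s_0^*(x) \in (-\infty, 0)$ iff $n\pab < x \le \Psi_0 = v_0/e_0$. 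Specializing to $k = 0$ with $v_0 - e_0^2 = n\pab(1 - \pab)$ produces the closed form in the second case of the proposition. A short calculation gives $\Psi_{n-1} = n$, and at $x = n$ the identities $v_{n-1} p_{n-1} - e_{n-1}^2 = \mathbb{P}(Z_n = n-1)\,\mathbb{P}(Z_n = n)$ and $p_{n-1} n^2 - 2 n e_{n-1} + v_{n-1} = \mathbb{P}(Z_n = n-1)$ yield $P_2(n; Z_n) = \pab^n$.

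Two steps remain: global minimality of $s_{k(x)}^*(x)$ over all $s < x$ (not merely on its own piece), and the boundary case $x \le n\pab$. Since $h \in C^1$, the function $N(s) := h'(s)(x - s) + 2 h(s)$ — whose sign equals that of $F'(s)$ — is continuous and piecewise linear with slope $2(p_{k'} x - e_{k'})$ on piece $k'$. Because $e_{k'}/p_{k'} = \mathbb{E}[Z_n \mid Z_n \ge k']$ is non-decreasing in $k'$, these slopes pass from nonnegative to nonpositive as $k'$ grows, making $N$ unimodal on $(-\infty, x)$; combined with $N(s) \to -\infty$ as $s \to -\infty$ and $N(x) = 2 h(x) \ge 0$, $N$ admits a unique sign-change in $(-\infty, x)$, located at $s_{k(x)}^*(x)$ by the interval characterization, so this critical point is the global minimizer of $F$. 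For $x \le n\pab$, Jensen's inequality gives $h(s) \ge (\mathbb{E}[Z_n] - s)^2 \ge (x - s)^2$ whenever $s < x \le n\pab$, hence $F(s) \ge 1$, with $F(s) \to 1$ as $s \to -\infty$, giving $P_2(x; Z_n) = 1$. The main obstacle is the unimodality-based global-minimality step; everything else is routine computation.
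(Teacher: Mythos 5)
Your proof is correct, but it takes a genuinely different route from the paper's. The paper outsources the structural facts to Proposition~3.2 of Pinelis (2009): it introduces the auxiliary function $F(h) = \mathbb{E}[Z_n(Z_n-h)_+]/\mathbb{E}[(Z_n-h)_+]$, cites that result for its continuity, monotonicity, the uniqueness of the root of $F(h)=x$, the representation $P_2(x;Z_n)=\mathbb{E}[(Z_n-h_x)_+]/(x-h_x)_+$, the endpoint values at $x\le n\pab$ and $x\ge n$, and the affine-invariance identity; the remaining work is the piecewise computation and inversion of $F$. You instead prove everything from scratch: the affine change of variables for the first identity, direct minimization of $h(s)/(x-s)^2$ on each quadratic piece, and — the one step the paper never has to confront because it is hidden inside the cited uniqueness claim — global minimality, which you get from the unimodality of $N(s)=h'(s)(x-s)+2h(s)$ via the monotonicity of $\mathbb{E}[Z_n\mid Z_n\ge k']$ together with $N(-\infty)=-\infty$ and $N(x^-)=2h(x)>0$. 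Your critical point $s_k^*(x)=(e_kx-v_k)/(p_kx-e_k)$ is exactly the paper's $h_x$, so the closed forms coincide, and your Jensen argument for $x\le n\pab$ replaces the paper's appeal to Proposition~3.2(iv). The trade-off is clear: your version is self-contained and makes the key convexity/unimodality mechanism visible, at the cost of carrying the global-minimality argument yourself; the paper's is shorter but not self-contained. The only blemishes in your write-up are harmless boundary conventions (e.g.\ $s_k^*(x)\in[k-1,k)$ versus $(k-1,k]$, and $s_0^*(\Psi_0)=0\notin(-\infty,0)$), which are immaterial because $h$ is $C^1$ and the adjacent piecewise formulas agree at $x=\Psi_k$.
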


\begin{proof}
The result is mostly an implication of Proposition 3.2 of~\citet{pinelis2009bennett}.
% Because $M_n$ is the sum of $G_i$ random variables, we get that
It is clear that
\[
M_n := \sum_{i=1}^n G_i ~\overset{d}{=}~ \frac{A^2 + B^2}{B}\left(\sum_{i=1}^n R_i - \frac{nA^2}{A^2 + B^2}\right),
\]
where $R_i\sim\mathrm{Bernoulli}(A^2/(A^2 + B^2))$, that is,
\[
    \mathbb{P}\left(R_i = 1\right) ~=~ \pab ~=~ 1 - \mathbb{P}(R_i = 0).
\]
Proposition 3.2(vi) of~\citet{pinelis2009bennett} implies that
\[
    P_2(u;\, M_n) ~:=~ P_2\left(\frac{Bu + nA^2}{A^2 + B^2};\, Z_n \right).
\]
Hence it suffices to find $P_2(x;\,Z_n)$ for all $x\in\mathbb{R}$. The support of $Z_n$ is given by
\[
\mathrm{supp}(Z_n) = \{0, 1, 2, \ldots, n\}.
\]
% Set $x_* ~:=~ n$ and $x_{**} ~:=~ n-1.$
Proposition 3.2(iv) of~\citet{pinelis2009bennett} (with $\alpha = 2$) implies that
\[
    P_2\left(x; Z_n \right) ~=~
\begin{cases}
    1,&\mbox{if }x \le n \pab,\\
    \mathbb{P}\left(Z_n = n\right), &\mbox{if }x\ge n.
\end{cases}
\]
Furthermore, $x\mapsto P_2(x; \sum_{i=1}^n R_i)$ is strictly decreasing on $(n \pab,
n)$. Define function $F(h): \R \rightarrow \R$ such that
\begin{equation}\label{eq:definition-F_h}
F(h) ~:=~ \frac{\mathbb{E}[Z_n\left(Z_n - h\right)_+]}{\mathbb{E}\left(Z_n -
        h\right)_+}.
\end{equation}
For any $n \pab < x < n$, let $h_x$ be the unique solution of
\begin{equation}\label{eq:root-solving}
    F(h)~=~ x
\end{equation}
(Uniqueness here is established by Proposition 3.2(ii) of~\citet{pinelis2009bennett}.) Then by Proposition 3.2(iii) of~\citet{pinelis2009bennett},
\begin{equation}\label{eq:P_2-based-on-h-x}
\begin{split}
      P_2\left(x; Z_n\right) ~&=~ \frac{\mathbb{E}[(Z_n - h_x)_+^2]}{(x - h_x)_+^2}\\
~&=~ \frac{\mathbb{E}[Z_n\left(Z_n - h_x\right)_+] - h_x\mathbb{E}[\left(Z_n - h_x\right)_+]}{(x - h_x)_+^2}\\
~&=~ \frac{(x - h_x)\mathbb{E}[\left(Z_n - h_x\right)_+]}{(x - h_x)_+^2}\\
~&=~ \frac{\mathbb{E}[\left(Z_n - h_x\right)_+]}{(x - h_x)_+}.
\end{split}
\end{equation}
This holds for all $nA^2/(A^2 + B^2) < x < n$. We will now discuss solving~\eqref{eq:root-solving}.

Proposition 3.2(i) of~\citet{pinelis2009bennett} implies that $h\mapsto F(h)$ is
continuous and increasing. %on $(nA^2/(A^2 + B^2),\,n)$.
%Set
%\[
%Z_n = \sum_{i=1}^n R_i\quad\mbox{and}\quad \pab ~:=~ \frac{A^2}{A^2 + B^2}
%\]

If $h\le0$,
\[
F(h)
= \frac{\mathbb{E}[Z_n(Z_n - h)]}{\mathbb{E}[Z_n - h]}
= \frac{n\pab(1 - \pab) + n^2\pab^2 - hn\pab}{n\pab - h} = n\pab + \frac{np(1-\pab)}{np-h}.
\]
This is strictly increasing on $(-\infty, 0]$, and $F(0) = n\pab + (1 - \pab)$.
We get that for any $n\pab < x \leq n\pab + (1 - \pab)$,
\[
F(h) = x \quad\Leftrightarrow\quad h_x = n\pab - \frac{n\pab(1 - \pab)}{x - n\pab}.
\]
This further implies (from~\eqref{eq:P_2-based-on-h-x}) that
\begin{align*}
P_2(x; Z_n) ~&=~ \frac{\mathbb{E}[Z_n - h_x]}{x - h_x}\\
~&=~ \frac{n\pab(1 - \pab)}{(x - n\pab)^2 + n\pab(1 - \pab)},\quad\mbox{for}\quad n\pab \le x \le n\pab + (1 - \pab).
\end{align*}

If $0 < h < n-1$, set $k = \ceil{h}$, in other words, $k -1 < h \leq k$.
% Fix $1\le k\le n-1$. Then for $k-1 < h \le k$,
Since $\{Z_n \ge h\} \Leftrightarrow \{Z_n \ge k\}$, hence
\begin{align*}
   \mathbb{E}[Z_n\left(Z_n - h\right)_+]
&= \mathbb{E}[Z_n^2\mathds{1}\{Z_n \ge h\}] - h\mathbb{E}[Z_n\mathds{1}\{Z_n \ge h\}]\\
&= \mathbb{E}[Z_n^2\mathds{1}\{Z_n \ge k\}] - h\mathbb{E}[Z_n\mathds{1}\{Z_n \ge k\}],\\
    \mathbb{E}[(Z_n - h)_+]
&= \mathbb{E}[Z_n\mathds{1}\{Z_n \ge k\}] - h\mathbb{P}(Z_n \ge k).
\end{align*}
Therefore,
\begin{align*}
F(h) ~&=~ \frac{\mathbb{E}[Z_n^2\mathds{1}\{Z_n \ge k\}] - h\mathbb{E}[Z_n\mathds{1}\{Z_n \ge k\}]}{\mathbb{E}[Z_n\mathds{1}\{Z_n \ge k\}] - h\mathbb{P}(Z_n \ge k)}\\
~&=~ \frac{v_k - he_k}{e_k - hp_k}.
\end{align*}

It is not difficult to verify that $F(\cdot)$ is strictly increasing in $(k-1, k]$ and hence
\[
    h_x ~=~ \frac{v_k - xe_k}{e_k - xp_k}, \quad\mbox{if}\quad F(k-1) < x \le F(k).
\]
Substituting this $h_x$ in~\eqref{eq:P_2-based-on-h-x} yields the value of $P_2(x;\,Z_n)$, that is,
\begin{align*}
P_2(x;\,Z_n) ~&=~ \left(x - \frac{v_{k} - xe_{k}}{e_{k} - xp_{k}}\right)^{-1}\left(e_{k} - \frac{v_{k} - xe_{k}}{e_{k} - xp_{k}}p_{k}\right)\\
~&=~ \left(\frac{e_{k} - xp_{k}}{2xe_{k} - x^2p_{k} - v_{k}}\right)\left(\frac{e_{k}^2 - v_{k}p_{k}}{e_{k} - xp_{k}}\right)\\
~&=~ \frac{e_{k}^2 - v_{k}p_{k}}{2xe_{k} - x^2p_{k} - v_{k}},\quad\mbox{whenever}\quad
F(k-1) < x \le F(k),
\end{align*}
where $F(k) = \frac{v_k - k e_k}{e_k - k p_k}$, $1 \le k \le n-1$.
% Following the notation $p_{k-1}, e_{k-1},$ and $v_{k-1}$, we get
% \[
% F(k-1) = \frac{v_{k-1} - (k-1)e_{k-1}}{e_{k-1} - (k-1)p_{k-1}}\quad\mbox{and}\quad F(k) = \frac{v_{k} - ke_{k}}{e_{k} - kp_{k}}.
% \]
Hence for $1\le k \le n-1$,
\[
P_2(x; Z_n) ~=~ \frac{v_{k}p_{k} - e_{k}^2}{x^2p_{k} - 2xe_{k} + v_{k}},\quad\mbox{whenever}\quad \frac{v_{k-1} - (k-1)e_{k-1}}{e_{k-1} - (k-1)p_{k-1}} < x \le \frac{v_k - ke_{k}}{e_k - kp_k}.
\]

Finally, we prove that $F(\cdot)$ is a constant on $[n-1, n]$. It is clear that
\begin{align*}
F(n-1) &=  \frac{v_{n-1} - (n-1)e_{n-1}}{e_{n-1} - (n-1)p_{n-1}}\\
&= \frac{\mathbb{E}[Z_n^2\mathds{1}\{Z_n \ge n-1\}] - (n-1)\mathbb{E}[Z_n\mathds{1}\{Z_n \ge n-1\}]}{\mathbb{E}[Z_n\mathds{1}\{Z_n \ge n-1\}] - (n-1)\mathbb{P}(Z_n \ge n-1)}\\
&= \frac{(n^2 - n(n-1))\mathbb{P}(Z_n = n)}{(n - (n-1))\mathbb{P}(Z_n = n)} = n.
\end{align*}
Further if $h > n-1$, then $(Z_n - h)_+ > 0$ if and only if $Z_n = h$ and hence from~\eqref{eq:definition-F_h}
\[
F(h) = \frac{\mathbb{E}[Z_n(Z_n - h)_+]}{\mathbb{E}[(Z_n - h)_+]} = \frac{n(n - h)\mathbb{P}(Z_n = n)}{(n-h)\mathbb{P}(Z_n = n)} = n.
\]
Therefore, the function $F(h)$ is constant on $[n-1, n]$.

For $h > n$, we set $F(h) = n$ since $\mathbb{P}(Z_n > h) = 0$. To put all the pieces together, we have
\begin{equation}
    \label{eq:Fh_final}
    F(h) =
    \begin{cases}
        n\pab + \frac{np(1-\pab)}{np-h} & \quad\mbox{if} \quad h <= 0, \\
        \dfrac{v_\ceil{h} - h e_\ceil{h}}{e_\ceil{h} - hp_\ceil{h}} & \quad\mbox{if} \quad 0 < h \le n-1,\\
        n & \quad\mbox{if} \quad  h > n-1.
    \end{cases}
\end{equation}
Consequently, for $n\pab < x < n$,
\begin{equation}
    \label{eq:Fh_inv_final}
    h_x = F^{-1}(x) =
    \begin{cases}
        n\pab - \frac{n\pab(1 - \pab)}{x - n\pab}, & \quad\mbox{if} \quad n\pab < x \leq n\pab + (1 - \pab), \\
        \frac{v_k - xe_k}{e_k - xp_k}, & \quad\mbox{if}\quad F(k-1) < x \le F(k), \,1 \leq k \leq n-1. \\
    \end{cases}
\end{equation}
\end{proof}
As a graphical example, Figure~\ref{fig:function_F_and_P2} plots $F(h)$ and $P_2(x; Z_n)$ when $n=3$, $A=0.1$ and
        $B=1.0$.
        
\begin{figure}[t]
    \centering
    \includegraphics[width=0.32\textwidth]{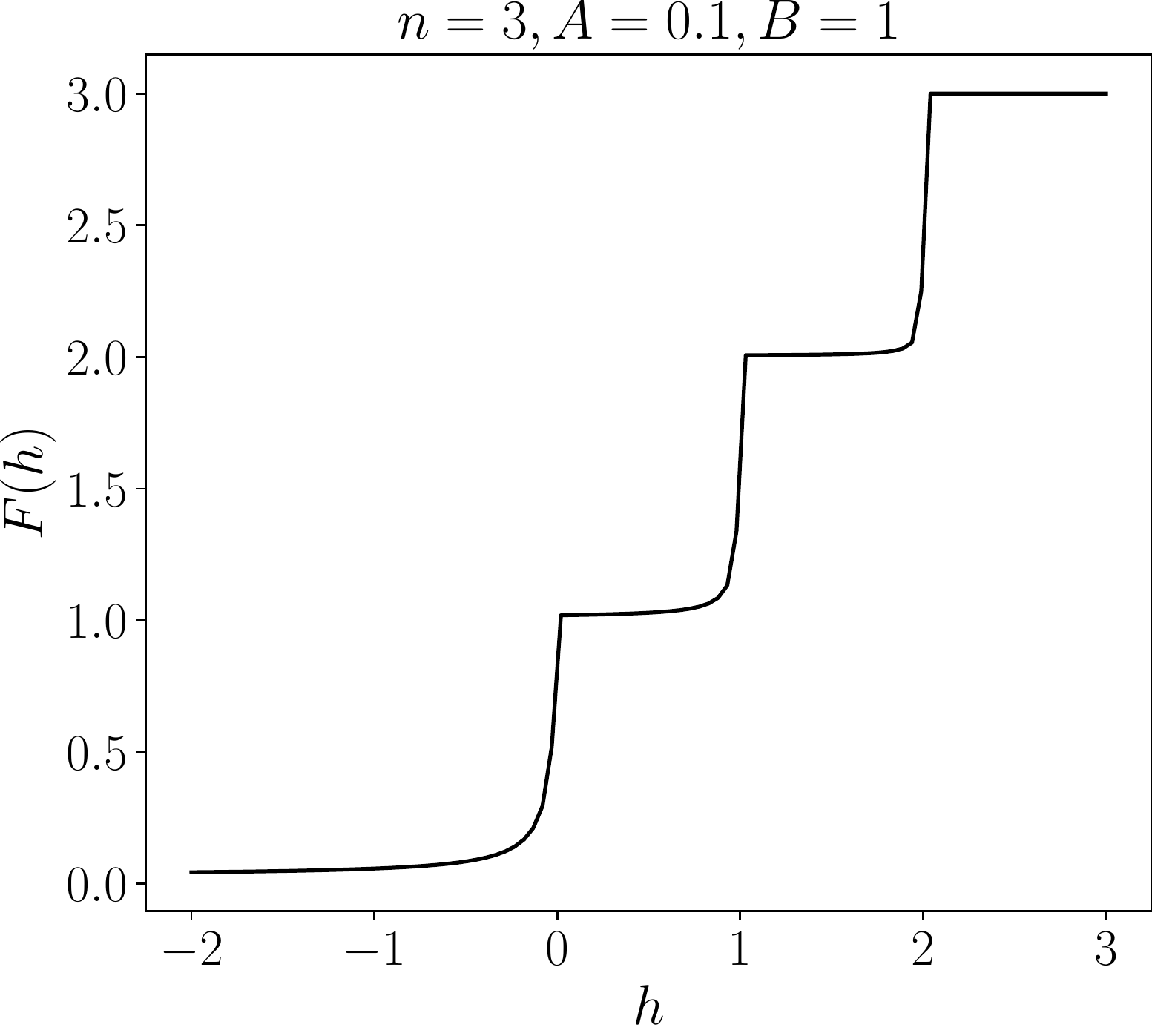}
    \hspace{0.01\textwidth}
    \includegraphics[width=0.64\textwidth]{function_P2_n3}
    \caption{Examples functions $F(h)$ and $P_2(x; Z_n)$ when $n=3$, $A=0.1$ and
        $B=1.0$. We plot $P_2(x;Z_n)$ in both linear (second plot) and log (third plot) scales on the y-axis.}
    \label{fig:function_F_and_P2}
\end{figure}

\subsection{Computation of the Quantile}\label{subsec:computation-of-quantile}
Recall $\pab = A^2/(A^2 + B^2), Z_n = \sum_{i=1}^n R_i,$ and $\sum_{i=1}^nG_i$ is identically distributed as $B^{-1}(A^2 + B^2)(Z_n - n\pab)$. We will compute $x_{\delta}$ such that
\begin{equation}\label{eq:quantile-of-Zn}
P_2(x_{\delta}; Z_n) = \delta.
\end{equation}
This implies that
\[
P_2\left(\frac{(A^2 + B^2)x_{\delta} - nA^2}{B};\,\sum_{i=1}^n G_i\right) = \delta,\quad\mbox{or equivalently,}\quad q(\delta; n, A, B) = \frac{(A^2 + B^2)x_{\delta} - nA^2}{B}.
\]
Hence we concentrate on solving~\eqref{eq:quantile-of-Zn}. Recall that for any $x\ge 0$ and $1\le k \le n-1$,
\begin{equation}\label{eq:recall-p2}
P_2\left(x; Z_n\right) ~=~
\begin{cases}
    1,  &   \mbox{if }x ~\le~ n \pab,\\
    \frac{n\pab(1 - \pab)}{(x - n\pab)^2 + n\pab(1 - \pab)}, &\mbox{if }n \pab < x \le \frac{v_0}{e_0} = n\pab + (1 - \pab),\\
    \frac{v_{k}p_{k} - e_{k}^2}{x^2p_{k} - 2xe_{k} + v_{k}}, &\mbox{if }\frac{v_{k-1} - (k-1)e_{k-1}}{e_{k-1} - (k-1)p_{k-1}} < x \le \frac{v_k - ke_k}{e_{k} - kp_k},\\
    \mathbb{P}\left(Z_n = n\right) = \pab^n, &\mbox{if }x~\ge~ \frac{v_{n-1} - (n-1)e_{n-1}}{e_{n-1} - (n-1)p_{n-1}} = n.
\end{cases}
\end{equation}
The function $P_2(\cdot; Z_n)$ is a non-increasing function and hence if $\delta \le \pab^n$, then we get $x_{\delta} = n + 10^{-8}$; this corresponds to the last case in~\eqref{eq:recall-p2}. If $P_2(v_0/e_0; Z_n) \le \delta \le 1$, then
\[
x_{\delta} = n\pab + \sqrt{\frac{(1 - \delta)n\pab(1-\pab)}{\delta}};
\]
this corresponds to the first and second case in~\eqref{eq:recall-p2}; note that $P_2(v_0/e_0; Z_n) = n\pab(1-\pab)/[(1-\pab)^2 + n\pab(1-\pab)]$.
For the remaining cases, note that if
there exists a $1 \le k\le n-1$ such that
\[
P_2\left(\frac{v_k - ke_k}{e_{k} - kp_k}; Z_n\right) ~\le~ \delta ~\le~ P_2\left(\frac{v_{k-1} - (k-1)e_{k-1}}{e_{k-1} - (k-1)p_{k-1}}; Z_n\right),
\]
then
\begin{equation}\label{eq:finding-k}
\frac{v_{k-1} - (k-1)e_{k-1}}{e_{k-1} - (k-1)p_{k-1}} ~\le~ x_{\delta} ~\le~ \frac{v_k - ke_k}{e_{k} - kp_k},
\end{equation}
and using the closed form expression of $P_2(\cdot; Z_n)$ on this interval, we get
\begin{equation}\label{eq:final-formula-x-delta}
x_{\delta} = \frac{e_k + \sqrt{e_k^2 - p_k(v_k - (v_kp_k - e_k^2)/\delta)}}{p_k}.
\end{equation}
Using these calculations, one can find $k$ looping over $1\le k\le n-1$ such that~\eqref{eq:finding-k} holds. This approach has a complexity of $O(n)$, assuming the availability of $p_k, e_k,$ and $v_k$.

We now describe an approach that reduces the complexity by finding quick-to-compute upper and lower bounds on $x_{\delta}$. Lemmas 1.1 and 3.1 of~\citet{bentkus2006domination} show that
\begin{equation}\label{eq:two-bounds-P2}
\mathbb{P}(Z_n \ge x) \le P_2(x; Z_n) \le \frac{e^2}{2}\mathbb{P}^{\circ}(Z_n \ge x),
\end{equation}
where $\mathbb{P}^{\circ}(Z_n \ge x)$ represents the log-linear interpolation of $P(Z_n \ge x)$, that is, for $x \in \{0, 1, \ldots, n\}$
\begin{equation}\label{eq:at-lattice-points}
\mathbb{P}^{\circ}(Z_n \ge x) ~=~ \mathbb{P}(Z_n \ge x),
\end{equation}
and for $x\in(k-1, k)$ such that $x = (1 - \lambda)(k-1) + \lambda k$,
\[
\mathbb{P}^{\circ}(Z_n \ge x) = (\mathbb{P}(Z_n \ge k-1))^{1-\lambda}(\mathbb{P}(Z_n \ge k))^{\lambda}.
\]
Equation (2) of~\citet{bentkus2002remark} further shows that
\begin{equation}\label{eq:linear-interpolation}
\mathbb{P}^{\circ}(Z_n \ge x) \le (1 - \lambda)\mathbb{P}(Z_n \ge k-1) + \lambda\mathbb{P}(Z_n \ge k).
\end{equation}
Hence, to find $x = x_{\delta}$ satisfying $P_2(x; Z_n) = \delta$, find $k_1\in\{0, 1,\ldots, n\}$ such that
\[
\mathbb{P}(Z_n \ge k_1) \ge \delta.
\]
This implies (from~\eqref{eq:two-bounds-P2}) that $P_2(k_1; Z_n) \ge \delta$ and because $x\mapsto P_2(x; Z_n)$ is decreasing, $x_{\delta} \ge k_1$. Further, find $k_2\in\{0, 1, \ldots, n\}$ such that
\[
\mathbb{P}(Z_n \ge k_2) \le {2\delta}/{e^2}.
\]
This implies (from~\eqref{eq:linear-interpolation}) that $\mathbb{P}^o(Z_n \ge k_2) = \mathbb{P}(Z_n \ge k_2) \le 2\delta/e^2$. Hence using~\eqref{eq:at-lattice-points}, we get $P_2(k_2; Z_n) \le \delta$ which implies that $x_{\delta} \le k_2$. Summarizing this discussion, we get that $x_{\delta}$ satisfying $P_2(x_\delta; Z_n) = \delta$ also satisfies
\begin{equation}\label{eq:initial-bounds}
k_1 \le x_{\delta} \le k_2,
\end{equation}
where
\[
\mathbb{P}(Z_n \ge k_1) \ge \delta\quad\mbox{and}\quad \mathbb{P}(Z_n \ge k_2) \le {2\delta}/{e^2}.
\]
The bounds in~\eqref{eq:initial-bounds} are not very useful because the closed form experssion~\eqref{eq:final-formula-x-delta} of $x_{\delta}$ requires finding upper and lower bounds for $x_{\delta}$ in terms of $(v_k - ke_k)/(e_k - kp_k)$'s.

Now we note that
\[
v_{k} \ge ke_{k} \ge k^2p_{k}\quad\Rightarrow\quad \frac{v_{k_2} - k_2e_{k_2}}{e_{k_2} - k_2p_{k_2}} \ge k_2.
\]
This combined with~\eqref{eq:initial-bounds} proves that
\[
k_1 \le x_{\delta} \le k_2 \le \frac{v_{k_2} - k_2e_{k_2}}{e_{k_2} - k_2p_{k_2}}.
\]
The lower bound here is still not in terms of the ratios $(v_k - ke_k)/(e_k - kp_k)$. But given the upper bound, we can search for $k \le k_2$ (by running a loop from $k_2$ to 0) such that
\begin{equation}\label{eq:finding-k-second}
\frac{v_{k-1} - (k-1)e_{k-1}}{e_{k-1} - (k-1)p_{k-1}} ~\le~ x_{\delta} ~\le~ \frac{v_k - ke_k}{e_{k} - kp_k}.
\end{equation}
Another approach is to make use of the lower bound in~\eqref{eq:initial-bounds}. Because $k_1 \le (v_{k_1} - k_1e_{k_1})/(e_{k_1} - k_1p_{k_1})$, there are two possibilities:
\begin{enumerate}
    \item $k_1 \le x_{\delta} \le (v_{k_1} - k_1e_{k_1})/(e_{k_1} - k_1p_{k_1})$;
    \item $k_1 \le (v_{k_1} - k_1e_{k_1})/(e_{k_1} - k_1p_{k_1}) < x_{\delta}$.
\end{enumerate}
In the first case, it suffices to search for $k \le k_1$ such
that~\eqref{eq:finding-k-second}. In the second case, we can search over $k_1 + 1 \leq k \le k_2$ as before.

\section{Proof of Theorem~\ref{thm:initial-maximal-ineq}}\label{appsec:proof-of-theorem-initial-maximal-ineq}
It is clear that $(S_t, \mathcal{F}_t)_{t = 1}^{n}$ with $\mathcal{F}_t = \sigma\{X_1, \ldots, X_t\}$ is a martingale because
\[
\mathbb{E}\left[S_t|\mathcal{F}_{t-1}\right] = S_{t-1} + \mathbb{E}[X_t] = S_{t-1}.
\]
Consider now the process
\[
D_t := \left(S_t - x\right)_+^2\quad\mbox{for a fixed}\quad x > 0.
\]
The function $f:y\mapsto (y - x)^2_+$ is continuous and satisfies
\[
f'(y) = \begin{cases}0,&\mbox{if }y \le x,\\2(y - x), &\mbox{if }y > x,\end{cases}\quad\mbox{and}\quad f''(y) = \begin{cases}0,&\mbox{if }y \le x,\\
2, &\mbox{if }y > x.\end{cases}
\]
Therefore, $f(\cdot)$ is a convex function. This implies by Jensen's inequality that
\[
\mathbb{E}[D_t|\mathcal{F}_{t-1}] = \mathbb{E}[f(S_t)|\mathcal{F}_{t-1}] \ge f(S_{t-1}).
\]
Hence $(D_t, \mathcal{F})_{t=1}^{n}$ is a submartingale. Doob's inequality now implies that
\begin{align*}
\mathbb{P}\left(\max_{1 \le t \le n} S_t \ge u\right) ~&\overset{(a)}{=}~ \mathbb{P}\left(\max_{1 \le t \le n}(S_t - x)^2_+ \ge (u - x)_+^2\right)\\
~&=~ \mathbb{P}\left(\max_{1 \le t\le n}D_t \ge (u - x)_+^2\right)\\
~&\overset{(b)}{\le}~ \frac{\mathbb{E}[D_{n}]}{(u - x)_+^2} ~\le~ \frac{\mathbb{E}[(S_{n} - x)_+^2]}{(u - x)_+^2}.
\end{align*}
Here equality (a) holds for every $x \le u$ and inequality (b) holds because of Doob's inequality. Because $x \le u$ is arbitrary, we get
\[
\mathbb{P}\left(\max_{1 \le t \le n} S_t \ge u\right) \le \inf_{x \le u}\frac{\mathbb{E}[(S_{n} - x)_+^2]}{(u - x)_+^2},
\]
and condition~\eqref{eq:boundedness-conditions} along with Theorem 2.1 of~\citet{bentkus2006domination} (or~\citet{pinelis2006binomial}) imply that
\[
\mathbb{P}\left(\max_{1 \le t\le n}S_t \ge u\right) ~\le~ \inf_{x\le u}\,\frac{\mathbb{E}[(\sum_{i=1}^n G_i - x)_+^2]}{(u - x)_+^2}.
\]
% \begin{lem}[Lemma 3.1 of~\citet{bentkus2006domination}]\label{lem:lemma3-1-Bentkus}
% The function $u\mapsto \inf\{\mathbb{E}[(M_{n} - x)_+^2]/(u - x)_+^2:\, x\le u\}$ is decreasing and only takes values in $[0, 1]$.
% \end{lem}
The definition~\eqref{eq:quantile-A-B-2nd-version} of $q(\delta; n, \mathcal{A}, B)$ readily implies
\[
\mathbb{P}\left(\max_{1 \le t\le n} S_t ~\ge~ q(\delta; n, \mathcal{A}, B)\right) \le \delta.
\]
This completes the proof of~\eqref{eq:maximal-inequality}. We now prove the sharpness. Note that the condition
\[
\mathbb{P}\left(\max_{1\le t\le n}\,S_t \ge n\tilde{q}(\delta^{1/n};A, B)\right) \le \delta\quad\mbox{for all}\quad \delta\in[0, 1],
\]
is equivalent to the existence of a function $x\mapsto H(x; A, B)$ such that
\[
\mathbb{P}\left(\max_{1\le t\le n}\,S_t \ge nu\right) \le H^n(u; A, B),\quad\mbox{for all}\quad u.
\]
(The function $\delta\mapsto \tilde{q}(\delta^{1/n}; A, B)$ is the inverse of $u\mapsto H^n(u; A, B)$.) In particular, this implies that
\[
\mathbb{P}\left(S_n \ge nu\right) \le H^n(u; A, B)\quad\mbox{for all}\quad u.
\]
Now, Lemma 4.7 of~\citet{bentkus2004hoeffding} (also see Eq. (2.8) of~\citet{hoeffding1963probability}) implies that
\begin{align*}
H^n(u; A, B) ~&\ge~ \left\{\left(1 + \frac{Bu}{A^2}\right)^{-(A^2 + Bu)/(A^2 + B^2)}\left(1 - \frac{u}{B}\right)^{-(B^2 - Bu)/(B^2 + A^2)}\right\}^n\\
~&=~ \inf_{h\ge0}\,e^{-nhu}\mathbb{E}\left[e^{h\sum_{i=1}^n G_i}\right],
\end{align*}
where $G_1, \ldots, G_n$ are independent random variables constructed through~\eqref{eq:G_distribution}. Proposition 3.5 of~\citet{pinelis2009bennett} implies that
\[
\inf_{h\ge0}\,e^{-nhu}\mathbb{E}\left[e^{h\sum_{i=1}^n G_i}\right] \ge \inf_{x \le nu}\,\frac{\mathbb{E}[(\sum_{i=1}^n G_i - x)_+^2]}{(nu - x)_+^2}.
\]
Summarizing the inequalities, we conclude
\[
\mathbb{P}\left(S_n \ge nu\right) \le \inf_{x \le nu}\,\frac{\mathbb{E}[(\sum_{i=1}^n G_i - x)_+^2]}{(nu- x)_+^2} \le \inf_{h\ge0}\,\mathbb{E}\left[e^{h\sum_{i=1}^n G_i - h(nu)}\right] \le H^n(u; A, B)\quad\forall\; u.
\]
This proves that $q(\delta; n, A, B) \le n\tilde{q}(\delta^{1/n}; A, B)$ for any valid $\tilde{q}(\cdot; A, B)$.
\section{Proof of Theorem~\ref{thm:uniform-in-n}}\label{appsec:proof-of-theorem-uniform-in-n}
The proof is based on~\eqref{eq:maximal-inequality} and a union bound. It is clear that
\begin{align*}
&\mathbb{P}\left(\exists\, t\ge 1:\,\sum_{i=1}^t X_i \ge q(\delta/h(k_t); c_t, \mathcal{A}, B)\right)\\
&\qquad=~ \mathbb{P}\left(\bigcup_{k=0}^{\infty}\left\{\exists\,\lceil\eta^k\rceil\le t\le \lfloor\eta^{k+1}\rfloor:\,\sum_{i=1}^t X_i \ge q(\delta/h(k_t); c_t, \mathcal{A}, B)\right\}\right)\\
&\qquad=~ \mathbb{P}\left(\bigcup_{k=0}^{\infty}\left\{\exists\,\lceil\eta^k\rceil \le t \le \lfloor\eta^{k+1}\rfloor:\,\sum_{i=1}^t X_i \ge q(\delta/h(k); \lfloor\eta^{k+1}\rfloor, \mathcal{A}, B)\right\}\right)\\
&\qquad\le~ \sum_{k=0}^{\infty} \mathbb{P}\left(\max_{\lceil\eta^k\rceil \le t \le \lfloor\eta^{k+1}\rfloor}\sum_{i=1}^t X_i \ge q(\delta/h(k); \lfloor\eta^{k+1}\rfloor, \mathcal{A}, B)\right)\\
&\qquad\le~ \sum_{k=0}^{\infty} \frac{\delta}{h(k)} \le \delta.
\end{align*}
%%%%%%%%%%%%%%%%%%%%%%%%%%%%%%%%%%%%%%%%%%%%%%%%%%%%%%%%%
%%%%%%%%%%%%%%%%%%%%%%%%%%%%%%%%%%%%%%%%%%%%%%%%%%%%%%%%%
\section{Proof of Theorem~\ref{thm:uniform-in-n-empirical-Bentkus}}\label{appsec:proof-of-lemma-variance-estimation}
Theorem~\ref{thm:uniform-in-n} implies that
\[
\mathbb{P}\left(\exists n\ge 1:\, S_n\ge q\left(\frac{\delta_1}{h(k_n)}; c_n, A, B\right)\right) \le \delta_1.
\]
Lemma~\ref{lem:uniform-in-n-variance} (below) proves
\[
\mathbb{P}\left(\exists n\ge1:\,A \ge \widebar{A}_n(\delta_2)\right) \le \delta_2.
\]
In particular this implies that
\[
\mathbb{P}\left(\exists n\ge1:\,A \ge \min_{1\le s\le n}\widebar{A}_s(\delta_2)\right) \le \delta_2.
\]
Combining the inequalities above with a union bound (and Lemma~\ref{lem:non-decreasing-quantile}) proves the result.
\begin{lem}\label{lem:uniform-in-n-variance}
Under the assumptions of Theorem~\ref{thm:uniform-in-n-empirical-Bentkus}, we have for any $\delta\in[0, 1],$
\begin{equation}\label{eq:uniform-in-n-variance}
\mathbb{P}\left(\exists t\ge 1:\, V_{2\lfloor t/2\rfloor} - \lfloor t/2\rfloor A^2 \le -\frac{\sqrt{\lfloor c_t/2\rfloor}(B - \underline{B})A}{\sqrt{2}}\Phi^{-1}\left(1 - \frac{2\delta}{e^2h(k_t)}\right)\right) \le \delta,
\end{equation}
where $W_i = (X_{2i} - X_{2i-1})^2/2$ and $V_t := \sum_{i=1}^{\lfloor t/2\rfloor} W_i.$
\end{lem}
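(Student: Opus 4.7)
The plan is to reduce the lemma's left-tail bound for the variance estimator to an upper-tail bound for a sum of centered, bounded-above random variables; apply a maximal, Gaussian-quantile-shaped concentration bound for those variables; and then stitch via a union bound over the same geometric epochs $\lceil\eta^k\rceil\le t\le\lfloor\eta^{k+1}\rfloor$ used in the proof of Theorem~\ref{thm:uniform-in-n}. Concretely, set $m=\lfloor t/2\rfloor$, $W_i=(X_{2i}-X_{2i-1})^2/2$, and $Y_i = A^2 - W_i$, so that $V_{2m}-mA^2 = -\sum_{i=1}^m Y_i$. The $Y_i$'s are independent, mean zero, and bounded above by $A^2$ since $W_i\ge 0$. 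Using $\mathbb{E}[(X_{2i}-X_{2i-1})^2]=2A^2$ and $|X_{2i}-X_{2i-1}|\le B-\underline{B}$,
\[
\mathrm{Var}(Y_i)=\mathrm{Var}(W_i)\;\le\;\frac{1}{4}\mathbb{E}[(X_{2i}-X_{2i-1})^4]\;\le\;\frac{(B-\underline{B})^2 A^2}{2}=:\sigma^2,
\]
so the $Y_i$'s fit the one-sided framework of~\eqref{eq:boundedness-conditions} with parameters $(\sigma, A^2)$, and the lemma's event becomes $\{\sum_{i=1}^m Y_i \ge \tau_t\}$ with $\tau_t := \sigma\sqrt{\lfloor c_t/2\rfloor}\,\Phi^{-1}(1-2\delta/(e^2 h(k_t)))$.

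The heart of the argument is the following maximal bound: for $M\ge 1$ and $u\ge 0$,
\[
\mathbb{P}\!\left(\max_{1\le s\le M}\sum_{i=1}^s Y_i \ge u\right) \;\le\; \frac{e^2}{2}\,\Phi\!\left(-\frac{u}{\sigma\sqrt{M}}\right).
\]
To obtain it, I would apply Theorem~\ref{thm:initial-maximal-ineq} to $Y_1,\ldots,Y_M$, giving $\mathbb{P}(\max_{s\le M}\sum_{i=1}^s Y_i \ge u)\le \tilde{P}_{2,M}(u)$; combine with the right-hand inequality of~\eqref{eq:main-optimality}, namely $\tilde{P}_{2,M}(u)\le (e^2/2)\mathbb{P}(\sum_{i=1}^M G_i\ge u)$, where the $G_i$'s are the worst-case two-point variables of~\eqref{eq:G_distribution} built from $(\sigma, A^2)$ and hence satisfy $\mathrm{Var}(G_i)=\sigma^2$; and finally apply the normal-tail domination $\mathbb{P}(\sum_{i=1}^M G_i\ge u)\le \Phi(-u/(\sigma\sqrt{M}))$ for the two-point sum, from~\citet{pinelis2016optimal}. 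The choice $u=\tau_t$ makes the right-hand side of the display exactly $\delta/h(k_t)$.

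To stitch, partition $\{t\ge 1\}=\bigcup_{k\ge 0}\{\lceil\eta^k\rceil\le t\le\lfloor\eta^{k+1}\rfloor\}$ as in the proof of Theorem~\ref{thm:uniform-in-n}. Within the $k$-th epoch, $c_t=\lfloor\eta^{k+1}\rfloor$, $k_t=k$, and $\lfloor t/2\rfloor\le M_k:=\lfloor\lfloor\eta^{k+1}\rfloor/2\rfloor$, so $\tau_t$ is constant there and $\sum_{i=1}^{\lfloor t/2\rfloor}Y_i \le \max_{1\le s\le M_k}\sum_{i=1}^s Y_i$. By the maximal bound above with $u=\tau_t$, the probability that some $t$ in the $k$-th epoch violates the bound is at most $\delta/h(k)$; summing over $k\ge 0$ and using $\sum_k 1/h(k)\le 1$ yields the claim.

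The main obstacle is the maximal Gaussian-domination inequality in the second paragraph. The Bentkus ingredient -- that $\tilde{P}_{2,M}(u)$ dominates the tail of the running maximum -- is routine: it follows from Doob's inequality applied to the submartingale $(S_s-x)_+^2$, just as in the proof of Theorem~\ref{thm:initial-maximal-ineq}. The nontrivial input is the normal tail bound on the worst-case two-point sum, $\mathbb{P}(\sum_i G_i\ge u)\le \Phi(-u/(\sigma\sqrt{M}))$, which is precisely the one-sided normal-domination result of~\citet{pinelis2016optimal} singled out in the introduction as the key tool behind the variance over-estimate $\widebar{A}_n(\delta)$. All other steps -- the variance computation for $W_i$, matching constants inside $\tau_t$ to the per-epoch budget $\delta/h(k)$, and the final union bound -- are mechanical.
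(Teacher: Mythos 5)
There is a genuine gap at what you yourself identify as the heart of the argument: the claimed normal-tail domination $\mathbb{P}(\sum_{i=1}^M G_i\ge u)\le \Phi(-u/(\sigma\sqrt{M}))$ for the two-point worst-case variables of~\eqref{eq:G_distribution} is false. A sum of mean-zero two-point variables with variance $\sigma^2$ and upper bound $A^2$ has a rescaled-binomial upper tail, which is not dominated by the Gaussian tail with matching variance: already for $M=1$, $\sigma^2=2$, $A^2=1$ (which lies in the regime $\sigma^2\ge 2A^4$ relevant here, since $A^2\le(B-\underline{B})^2/4$) one has $\mathbb{P}(G_1\ge 1)=\sigma^2/(\sigma^2+A^4)=2/3$ while $\Phi(-1/\sqrt{2})\approx 0.24$; when $A^2\gg\sigma$ the ratio is unbounded. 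This is exactly the large-deviation regime in which bounded variables fail to be sub-Gaussian, and it is why Bernstein's inequality carries a term linear in the bound. The result you attribute to \citet{pinelis2016optimal} is not a Gaussian bound on two-point sums; it is a statement about the \emph{left} tail of sums of \emph{non-negative} random variables, asserting that the functional $\inf_{u}\mathbb{E}[(u-V_{2n})_+^2]/(u-nA^2+x)^2$ is dominated by the corresponding $P_2$ functional of a shifted Gaussian $E_{1,n}+Z\sqrt{E_{2,n}}$ with $E_{2,n}=\sum_i\mathbb{E}[W_i^2]$ (the second moment, not the variance). By reducing to the generic one-sided framework~\eqref{eq:boundedness-conditions} you discard the non-negativity of the $W_i$, and the two-point bound you land on cannot then be pushed down to a Gaussian.

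The paper's proof keeps that structure: it applies Doob's inequality directly to the submartingale $(u-\{V_{2t}-tA^2\})_+^2$ to obtain $\mathbb{P}(\max_{1\le t\le n}\{V_{2t}-tA^2\}\le -x)\le\inf_{u\ge nA^2-x}\mathbb{E}[(u-V_{2n})_+^2]/(u-nA^2+x)^2$, invokes Corollary 2.7 of \citet{pinelis2016optimal} to dominate this by $P_2(E_{1,n}+Z\sqrt{E_{2,n}};E_{1,n}-x)$, and then uses \citet{bentkus2008extension} to bound the $P_2$ functional of a Gaussian by $(e^2/2)\,\mathbb{P}(Z\le -x/\sqrt{E_{2,n}})$; finally $E_{2,n}\le n(B-\underline{B})^2A^2/2$ follows from $W_i\le(B-\underline{B})^2/2$. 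Your epoch decomposition, the matching of $\tau_t$ to the per-epoch budget $\delta/h(k)$, and the bound on $\mathbb{E}[W_i^2]$ are all correct and coincide with the paper's; only the middle link needs to be replaced by the Pinelis--Bentkus chain above.
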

\begin{proof}
Fix $x \ge 0$. Note that for any $u\ge -x$,
\begin{align*}
\mathbb{P}\left(\max_{1\le t\le n}\,\{V_{2t} - tA^2\} \le -x\right)
&= \mathbb{P}\left(\max_{1\le t\le n}\,(u - \{V_{2t} - tA^2\})_+ \ge (u + x)_+\right),\\
&{\le} \frac{\mathbb{E}[(u - \{V_{2n} - 2nA^2\})_+^2]}{(u + x)_+^2}.
\end{align*}
where the last inequality follows from the fact that $\{(u - \{V_{2t} - tA^2\}\}_{t\ge1}$ is a submartingale. Therefore,
\begin{align*}
\mathbb{P}\left(\max_{1\le t\le n}\,\{V_{2t} - tA^2\} \le -x\right) &\le \inf_{u \ge -x}\,\frac{\mathbb{E}[(u - \{V_{2n} - nA^2\})_+^2]}{(u + x)_+^2}\\
&= \inf_{u\ge -x}\,\frac{\mathbb{E}[(u + nA^2 - V_{2n})_+^2]}{(u + x)_+^2}\\
&= \inf_{u\ge nA^2 - x}\,\frac{\mathbb{E}[(u - V_{2n})^2_+]}{(u - nA^2 + x)^2}.
\end{align*}
Corollary 2.7 (Eq. (2.24)) of~\citet{pinelis2016optimal} implies that
\begin{equation}\label{eq:second-moment-best}
\inf_{u\ge nA^2 - x}\,\frac{\mathbb{E}[(u - V_{2n})^2_+]}{(u - nA^2 + x)^2} \le P_2(E_{1,n} + Z\sqrt{E_{2,n}}; nA^2 - x) = P_2(E_{1,n} + Z\sqrt{E_{2,n}}; E_{1,n} - x),
\end{equation}
where $E_{j,t} = \sum_{i=1}^{\lfloor t/2\rfloor}\mathbb{E}[W_i^j]$ for $j = 1,2$ and $Z$ stands for a standard normal distribution. Inequality~\eqref{eq:second-moment-best} is \emph{not} the best inequality to use and there is a more precise version; see Theorem 2.4(I) and Corollary 2.7 of~\citet{pinelis2016optimal}. With the more precise version, the following steps will lead to a refined upper bound on $A$; we will not pursue this direction here.

It now follows from~\citet{bentkus2008extension} that
\[
P_2(E_{1,n} + Z\sqrt{E}_{2,n}; E_{1,n} - x) \le \frac{e^2}{2}\mathbb{P}\left(Z \le -\frac{x}{\sqrt{E_{2,n}}}\right).
\]
Because $X_i\in[\underline{B}, B]$ with probability 1, $W_i \le (B - \underline{B})^2/2$ and hence
\[
E_{2,n} = \mathbb{E}\sum_{i=1}^{n} \mathbb{E}[W_i^2] \le \frac{(B - \underline{B})^2}{2}\sum_{i=1}^{n} \mathbb{E}[W_i] = (B - \underline{B})^2E_{1,n}/2 = n(B - \underline{B})^2A^2/2.
\]
This implies that
\[
\mathbb{P}\left(\max_{1\le t\le n}\{V_{2t} - tA^2\} \le -x\right) \le \frac{e^2}{2}\mathbb{P}\left(Z \le -\frac{\sqrt{2}x}{\sqrt{n}(B - \underline{B})A}\right).
\]
Equating the right hand side to $\delta$ yields
\begin{equation}\label{eq:maximal-variance-case}
\mathbb{P}\left(\max_{1\le t\le n}\{V_{2t} - tA^2\} \le - \frac{\sqrt{n}(B - \underline{B})A}{\sqrt{2}}\Phi^{-1}\left(1 - \frac{2\delta}{e^2}\right)\right) \le \delta.
\end{equation}
Because of this maximal inequality, we can apply stitching and get~\eqref{eq:uniform-in-n-variance}.
Note that
\begin{align*}
&\mathbb{P}\left(\exists t\ge 1:\,V_{2\lfloor t/2\rfloor} - \lfloor t/2\rfloor A^2 \le -\frac{\sqrt{\lfloor c_t/2\rfloor}(B - \underline{B})A}{\sqrt{2}}\Phi^{-1}\left(1 - \frac{2\delta}{e^2h(k_t)}\right)\right)\\
&= \mathbb{P}\left(\exists t \ge 2:\,V_{2\lfloor t/2\rfloor} - \lfloor t/2\rfloor A^2 \le -\frac{\sqrt{\lfloor c_t/2\rfloor}(B - \underline{B})A}{\sqrt{2}}\Phi^{-1}\left(1 - \frac{2\delta}{e^2h(k_t)}\right)\right)\\
& = \mathbb{P}\left(\bigcup_{k=0}^{\infty}\left\{\exists \lceil\eta^k\rceil \le t\le \lfloor\eta^{k+1}\rfloor:\, V_{2\lfloor t/2\rfloor} - \lfloor t/2\rfloor A^2 \le -\frac{\sqrt{\lfloor c_t/2\rfloor}(B - \underline{B})A}{\sqrt{2}}\Phi^{-1}\left(1 - \frac{2\delta}{e^2h(k_t)}\right)\right\}\right)\\
& \le \sum_{k=0}^{\infty} \mathbb{P}\left(\exists\lceil\eta^k\rceil \le t\le \lfloor\eta^{k+1}\rfloor:\, V_{2\lfloor t/2\rfloor} - \lfloor t/2\rfloor A^2 \le -\frac{\sqrt{\lfloor c_t/2\rfloor}(B - \underline{B})A}{\sqrt{2}}\Phi^{-1}\left(1 - \frac{2\delta}{e^2h(k_t)}\right)\right)\\
&\le \sum_{k=0}^{\infty} \frac{\delta}{h(k)} \le \delta,
\end{align*}
where the last inequality follows from~\eqref{eq:maximal-variance-case} applied to $\{1\le t\le \lfloor c_t/2\rfloor\}$.

Inequality~\eqref{eq:uniform-in-n-variance} yields
\[
\mathbb{P}\left(tA^2 - \frac{\sqrt{\lfloor c_t/2\rfloor}(B - \underline{B})A}{\sqrt{2}}\Phi^{-1}\left(1 - \frac{2\delta}{e^2h(k_t)}\right) - V_{2t} \le 0\quad\forall\, t\ge 1 \right) \ge 1 - \delta.
\]
Inequality
\[
tA^2 - \frac{\sqrt{\lfloor c_t/2\rfloor}(B - \underline{B})A}{\sqrt{2}}\Phi^{-1}\left(1 - \frac{2\delta}{e^2h(k_t)}\right) - V_{2t} \le 0
\]
holds for $A > 0$ if and only if
\[
A \le g_{2,t} + \sqrt{g_{2,t}^2 + g_{3,t}},
\]
where
\[
g_{2,t} = \frac{\sqrt{\lfloor c_t/2\rfloor}(B - \underline{B})A}{2\sqrt{2}t}\Phi^{-1}\left(1 - \frac{2\delta}{e^2h(k_t)}\right)\quad\mbox{and}\quad g_{3,t} = \frac{V_{2\lfloor t/2\rfloor}}{\lfloor t/2\rfloor}.
\]
Hence a rewriting of~\eqref{eq:uniform-in-n-variance} is
\[
\mathbb{P}\left(A \ge g_{2,t} + \sqrt{g_{2,t}^2 + g_{3,t}}\quad\forall\; t\ge 1\right) \ge 1 - \delta.
\]
It is clear that $g_{2,t} = O(1/\sqrt{t})$ and $\mathbb{E}[V_{2\lfloor t/2\rfloor}/\lfloor t/2\rfloor] = A^2$ and hence the upper bounds above grows like $A + O(\sqrt{\log(h(k_t))/t})$.
\end{proof}
\section{Proof of Theorem~\ref{thm:empirical-Bentkus-arbitrary-mean}}\label{appsec:proof-empirical-bentkus-arbitrary-mean}
The assumption $\mathbb{P}(L \le X_i \le U) = 1$ implies that $\mathbb{P}(L - \mu \le X_i - \mu \le U - \mu) = 1$ and hence applying Theorem~\ref{thm:uniform-in-n} with $X_i - \mu$ and its upper bound $U - \mu$ yields
\begin{equation}\label{eq:upper-tail}
\mathbb{P}\left(\exists n\ge1:\, \sum_{i=1}^n (X_i - \mu) \ge q\left(\frac{\delta_1/2}{h(k_n)};\,c_n,\,A,\,U - \mu\right)\right) \le \frac{\delta_1}{2}.
\end{equation}
Similarly applying Theorem~\ref{thm:uniform-in-n} with $\mu - X_i$ and its upper bound $\mu - L$ yields
\begin{equation}\label{eq:lower-tail}
\mathbb{P}\left(\exists n\ge1:\, \sum_{i=1}^n (\mu - X_i) \ge q\left(\frac{\delta_1/2}{h(k_n)};\,c_n,\,A,\,\mu - L\right)\right) \le \frac{\delta_1}{2}.
\end{equation}
Finally Lemma~\ref{lem:uniform-in-n-variance} implies that
\begin{equation}\label{eq:variance-bound-arbitrary-mean}
\mathbb{P}\left(\exists n\ge1:\, A \ge \widebar{A}_n^*(\delta_2; U, L)\right) \le \delta_2.
\end{equation}
Now combining inequalities~\eqref{eq:upper-tail},~\eqref{eq:lower-tail}, and~\eqref{eq:variance-bound-arbitrary-mean} yields with probability $\ge 1 - \delta_1 - \delta_2$, for all $n\ge1$
\begin{align*}
-\frac{1}{n}q\left(\frac{\delta_1/2}{h(k_n)};\,c_n,\,A,\,\mu - L\right) \le \frac{S_n}{n} - \mu \le \frac{1}{n}q\left(\frac{\delta_1/2}{h(k_n)};\,c_n,\,A,\,U - \mu\right),\;\mbox{and}\; A \le \widebar{A}_n^*(\delta_2).
\end{align*}
On this event, we get by using $U - \mu \le U - L$ and $\mu - L \le U - L$,
\[
\mu^{\low}_0 \le \mu \le \mu^{\up}_0,
\]
and then recursively using $\mu_{n-1}^{\low} \le \mu \le \mu_{n-1}^{\up}$,
\[
-\frac{1}{n}q\left(\frac{\delta_1/2}{h(k_n)};\,c_n,\,\widebar{A}_n^*(\delta_2),\,\mu_{n-1}^{\up} - L\right) ~\le~ \frac{S_n}{n} - \mu ~\le~ \frac{1}{n}q\left(\frac{\delta_1/2}{h(k_n)};\,c_n,\,\widebar{A}_n^*(\delta_2),\,U - \mu_{n-1}^{\low}\right).
\]
This proves the result.
%%%%%%%%%%%%%%%%%%%%%%%%%%%%%%%%%%%%%%%%%%%%%%%%%%
%%%%%%%%%%%%%%%%%%%%%%%%%%%%%%%%%%%%%%%%%%%%%%%%%%
\section{Auxiliary Results}\label{appsec:auxiliary-results}
Define $M_t, t \ge 1$ as $M_t := \sum_{i=1}^t G_i,$
with
\[
\mathbb{P}\left(G_i = -A_i^2/B\right) = \frac{B^2}{A^2_i + B^2}\quad\mbox{and}\quad\mathbb{P}\left(G_i = B\right) = \frac{A^2_i}{A^2_i + B^2}.
\]
\begin{lem}\label{lem:empirical-variance-first-step}
For any $t\ge1$ and $x\in\mathbb{R}$, the map $(A_1, \ldots, A_t)\mapsto \mathbb{E}[(M_t - x)_+^2]$ is non-decreasing.
\end{lem}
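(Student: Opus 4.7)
The plan is to reduce the claim to the single-variable worst-case property of $G_i$ that already appears in the paper (equation~\eqref{eq:intermediate-optimality}, due to~\citet{bentkus2002remark} and~\citet{pinelis2006binomial}), via conditioning on the other coordinates.

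First, by the independence of the $G_i$'s, it suffices to prove monotonicity in one coordinate at a time. By relabeling we may assume this coordinate is the first, and fix any $A_1' \le A_1$; let $G_1'$ denote the two-point variable defined as in~\eqref{eq:G_distribution} but with $A_1$ replaced by $A_1'$, and couple it so that it is independent of $G_2, \ldots, G_t$ (which remain unchanged). Set $M_t' = G_1' + \sum_{i=2}^t G_i$. What must be shown is $\mathbb{E}[(M_t' - x)_+^2] \le \mathbb{E}[(M_t - x)_+^2]$.

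Next, I would condition on $W := \sum_{i=2}^t G_i$, which has the same distribution in both the primed and unprimed settings and is independent of both $G_1$ and $G_1'$. By Fubini,
\begin{equation}
\mathbb{E}[(M_t - x)_+^2] = \mathbb{E}_W\bigl[\mathbb{E}[(G_1 - (x - W))_+^2 \mid W]\bigr],
\end{equation}
and similarly for the primed version. Thus it suffices to show that for every real $y$,
\begin{equation}
\mathbb{E}[(G_1' - y)_+^2] ~\le~ \mathbb{E}[(G_1 - y)_+^2].
\end{equation}

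For this final step I would invoke the $n=1$ case of the Bentkus--Pinelis worst-case identity~\eqref{eq:intermediate-optimality}: among all random variables $X$ with $\mathbb{E}[X] = 0$, $\mathrm{Var}(X) \le A_1^2$, and $X \le B$ almost surely, the two-point law of $G_1$ maximizes $\mathbb{E}[(X - y)_+^2]$ for every $y \in \mathbb{R}$. Direct computation from~\eqref{eq:G_distribution} gives $\mathbb{E}[G_1'] = 0$, $\mathrm{Var}(G_1') = (A_1')^2 \le A_1^2$, and $G_1' \in \{-(A_1')^2/B, B\} \subseteq (-\infty, B]$; so $G_1'$ is admissible in this maximization problem, and the desired inequality follows. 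Chaining the one-coordinate steps $A_i \rightsquigarrow A_i'$ for $i = 1, \ldots, t$ completes the proof.

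I do not anticipate a real obstacle here: the only substantive input is~\eqref{eq:intermediate-optimality}, which is already stated and cited earlier in the paper. The only bookkeeping step is verifying that $G_1'$ is indeed a feasible competitor in the $n=1$ worst-case problem with variance budget $A_1^2$, which is immediate from~\eqref{eq:G_distribution}.
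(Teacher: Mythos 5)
Your proof is correct, and the outer structure (reduce to a single coordinate by conditioning on $\sum_{i=2}^t G_i$ and integrating) is exactly the paper's. Where you diverge is the one-dimensional crux. The paper proves $A_1 \mapsto \mathbb{E}[(G_1-y)_+^2]$ is non-decreasing by brute force: it writes the expectation explicitly as a function $g(p)$ of $p = A_1^2/B^2$, differentiates, and checks the sign of $\partial g/\partial p$ in three cases depending on where $y$ falls relative to $-Bp$ and $B$. You instead observe that $G_1'$ (built from $A_1' \le A_1$) has mean zero, variance $(A_1')^2 \le A_1^2$, and is bounded above by $B$, so it is a feasible competitor in the $n=1$ extremal problem~\eqref{eq:intermediate-optimality} with budget $A_1^2$, whose maximizer is $G_1$; the inequality $\mathbb{E}[(G_1'-y)_+^2] \le \mathbb{E}[(G_1-y)_+^2]$ is then immediate. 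Your route is shorter and generalizes for free to any $\alpha \ge 2$ (indeed to any $f$ with convex derivative, per Pinelis), but it leans on~\eqref{eq:intermediate-optimality}, which the paper states only with citations to Bentkus and Pinelis; the paper's calculus computation keeps the lemma self-contained and independent of that external result. There is no circularity in your version, since~\eqref{eq:intermediate-optimality} is not derived from this lemma anywhere in the paper, so both arguments are valid.
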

\begin{proof}
Suppose we prove that for every $y\in\mathbb{R}$,
\begin{equation}\label{eq:non-increasing-bernoulli}
A_1 \mapsto \mathbb{E}[(G_1 - y)_+^2]\mbox{ is non-decreasing},
\end{equation}
then by conditioning on $G_2, \ldots, G_t$ and taking $y = x + G_2 + \cdots + G_t$, we get for $A_1 \le A_1'$
\[
\mathbb{E}[(G_1(A_1) - y)_+^2] \le \mathbb{E}[(G_1(A_1') - y)_+^2].
\]
Now taking expectations on both sides with respect to $G_2, \ldots, G_t$ implies non-decreasingness of $A_1 \mapsto \mathbb{E}[(M_t - x)_+^2]$. This implies the result.

To prove~\eqref{eq:non-increasing-bernoulli},
\[
\mathbb{E}[(G_1 - y)_+^2] = \frac{B^2}{A^2_1 + B^2}\left(-\frac{A_1^2}{B} - y\right)_+^2 + \frac{A_1^2}{A_1^2 + B^2}(B - y)_+^2.
\]
Because $A_1 \to A_1^2/B^2$ is increasing, it suffices to show $A_1^2/B^2 \mapsto \mathbb{E}[(G_1 - y)_+^2]$ is non-decreasing with respect to $A_1^2/B^2$. Set $p = A_1^2/B^2$ and define
\[
g(p) = \frac{1}{1 + p}\left(-Bp - y\right)_+^2 + \frac{p}{1 + p}(B - y)_+^2.
\]
Differentiating with respect to $p$ yields
\begin{align*}
\frac{\partial g(p)}{\partial p} &= -\frac{(-Bp - y)_+^2}{(1 + p)^2} -\frac{2B(-Bp-y)_+}{1 + p} + \frac{(B - y)_+^2}{(1 + p)^2}\\
&= \frac{- (-Bp - y)_+^2 - 2B(1 + p)(-Bp - y)_+ + (B - y)_+^2}{(1 + p)^2}.
\end{align*}
If $y \le -Bp$ then $y + Bp < 0$ and $B - y > B(1 + p) > 0$ and hence
\[
\frac{\partial g(p)}{\partial p} = \frac{-(Bp + y)^2 + 2B(1 + p)(Bp + y) + (B - y)^2}{(1 + p)^2} = \frac{B^2 + B^2p^2 + 2B^2p}{(1 + p)^2} > 0.
\]
If $-Bp < y < B$ then $y + Bp > 0$ and $B - y > 0$ and hence
\[
\frac{\partial g(p)}{\partial p} = \frac{(B - y)^2}{(1 + p)^2} > 0.
\]
If $y > B$, then $\partial g(p)/\partial p = 0$. Hence $\partial g(p)/\partial p \ge 0$ for all $p$.
This proves~\eqref{eq:non-increasing-bernoulli}.
\end{proof}
Recall the definition of $q(\delta; t, \mathcal{A}, B)$ from~\eqref{eq:quantile-A-B-2nd-version}. In the case of equal variances, that is, $A_1 = A_2 = \ldots = A$, we write $A, q(\delta; t, A, B)$ for $\mathcal{A}, q(\delta; t, \mathcal{A}, B),$ respectively. We now prove that $A\mapsto q(\delta; t_2, A, B)$ is an non-decreasing function.
\begin{lem}\label{lem:non-decreasing-quantile}
For any $t\ge1$, the function $A\mapsto q(\delta; t, A, B)$ is an non-decreasing function.
\end{lem}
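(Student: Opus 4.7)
The plan is to reduce the monotonicity of the quantile to the monotonicity of its defining functional, and then invert. Throughout, I will write $\tilde{P}_{2,t,A}(u)$ and $M_t^{(A)} = \sum_{i=1}^t G_i$ to emphasize the dependence on the common standard deviation $A$, and I will use Lemma~\ref{lem:empirical-variance-first-step} as the engine.

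First, I would establish that for each fixed $u \in \mathbb{R}$, the map $A \mapsto \tilde{P}_{2,t,A}(u)$ is non-decreasing. By the definition in~\eqref{eq:definition-P2-tilde},
\[
\tilde{P}_{2,t,A}(u) \;=\; \inf_{x \le u} \frac{\mathbb{E}[(M_t^{(A)} - x)_+^2]}{(u - x)_+^2}.
\]
The denominator is free of $A$, while Lemma~\ref{lem:empirical-variance-first-step} (with $A_1=\cdots=A_t=A$) shows that the numerator is non-decreasing in $A$ for every fixed $x$. Hence each ratio is non-decreasing in $A$, and the pointwise infimum of a family of non-decreasing functions is itself non-decreasing.

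Next I would use two standard monotonicity facts about $u \mapsto \tilde{P}_{2,t,A}(u)$: it is non-increasing in $u$ (each ratio is non-increasing in $u$ for $x < u$, and the infimum is over a larger set), and on its strictly decreasing region the inverse $q(\delta;t,A,B)$ is well defined by~\eqref{eq:quantile-A-B-2nd-version}. With these in hand, the main argument is short: fix $A_1 \le A_2$ and $\delta$, set $q_i = q(\delta; t, A_i, B)$, and note
\[
\tilde{P}_{2,t,A_1}(q_2) \;\le\; \tilde{P}_{2,t,A_2}(q_2) \;=\; \delta \;=\; \tilde{P}_{2,t,A_1}(q_1)
\]
by the first step. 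Combined with the fact that $\tilde{P}_{2,t,A_1}$ is non-increasing in $u$, this forces $q_1 \le q_2$.

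The only place I expect any friction is the edge case built into definition~\eqref{eq:quantile-A-B-2nd-version}: when $\delta < \mathbb{P}(M_t^{(A)} = tB) = \bigl(A^2/(A^2+B^2)\bigr)^t$, the convention sets $q(\delta; t, A, B) = tB+1$. Since this "floor" probability is itself non-decreasing in $A$, the convention can be triggered for $A_2$ but not for $A_1$; the reverse cannot happen. In the "both trigger" subcase, equality $q_1 = q_2 = tB+1$ is immediate. In the "only $A_2$ triggers" subcase, $q_1$ lies in the support-range of $M_t^{(A_1)}$, hence $q_1 \le tB < tB+1 = q_2$ using Proposition~\ref{prop:computation-P_2} (beyond $u = tB$, $\tilde P_{2,t,A_1}$ is constant at the floor value). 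This finishes the lemma, and the bulk of the work is already done inside Lemma~\ref{lem:empirical-variance-first-step}.
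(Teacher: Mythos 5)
Your proof is correct and follows essentially the same route as the paper's: both reduce the claim to (i) monotonicity of $A\mapsto \mathbb{E}[(M_t-x)_+^2]$ via Lemma~\ref{lem:empirical-variance-first-step}, hence of the infimum defining $\tilde P_{2,t}$, and (ii) non-increasingness of that infimum in $u$, then invert by comparing the two quantiles (the paper evaluates $I(A_2;\cdot)$ at $q_1$ where you evaluate $\tilde P_{2,t,A_1}$ at $q_2$ — a mirror image of the same argument). Your explicit treatment of the $q = tB+1$ convention is a small bonus the paper omits, but it does not change the approach.
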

\begin{proof}
Lemma~\ref{lem:empirical-variance-first-step} proves that $A\mapsto \mathbb{E}[(M_{t} - x)_+^2]$ is non-decreasing. This implies that $I(A; u)$ is also non-decreasing in $A$, where
\[
I(A; u) ~:=~ \inf_{x\le u}\,\frac{\mathbb{E}[(M_{t} - x)_+^2]}{(u - x)_+^2}.
\]
Lemma 3.1 of~\citet{bentkus2006domination} proves that $I(A; u)$ is also non-increasing in $u$.
Fix $A_1 \le A_2$. From the definition of $\delta$,
\[
I(A_1, q(\delta; t, A_1, B)) = \delta\quad\mbox{and}\quad I(A_2, q(\delta; t, A_2, B)) = \delta.
\]
Because $I(A; u)$ is non-decreasing in $A$,
\[
I(A_2; q(\delta; t, A_2, B)) = \delta = I(A_1; q(\delta; t, A_1, B)) \le I(A_2; q(\delta; t, A_1, B))
\]
Hence $I(A_2; q(\delta; t, A_2, B)) \le I(A_2; q(\delta; t, A_1, B))$ and because $I(A; u)$ is non-increasing in $u$, we conclude that $q(\delta; t, A_1, B) \le q(\delta; t, A_2, B)$. This proves the result modulo the condition $A\mapsto \mathbb{E}[(M_{t} - x)_+^2]$ is non-decreasing.
\end{proof}
\begin{lem}\label{lem:homogenity-delta-function}
For any $\delta\in[0, 1]$, $q(\delta; t, AB, B^2) = Bq(\delta; t, A, B)$.
\end{lem}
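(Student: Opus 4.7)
The plan is a pure scaling argument. First I would observe how the two-point worst-case random variables $G_i$ rescale when the parameter pair $(A,B)$ is replaced by $(AB,B^2)$. By the definition in~\eqref{eq:G_distribution}, the variable $G_i'$ corresponding to $(AB,B^2)$ takes the values
\begin{equation}
-\frac{(AB)^2}{B^2}=-A^2\quad\text{with prob.}\quad\frac{B^4}{(AB)^2+B^4}=\frac{B^2}{A^2+B^2},
\end{equation}
and $B^2$ with the complementary probability $A^2/(A^2+B^2)$. Comparing with the distribution of the original $G_i$ associated to $(A,B)$, one sees directly that $G_i'\stackrel{d}{=}B\,G_i$, and hence $M_t':=\sum_{i=1}^t G_i'\stackrel{d}{=}B\,M_t$.

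Next I would plug this identity into the definition \eqref{eq:definition-P2-tilde} of $\tilde{P}_{2,t}$. Writing $\tilde{P}_{2,t}^{(A,B)}$ to emphasize the dependence on the parameter pair, and making the substitutions $x=By$ and $u=Bv$,
\begin{equation}
\tilde{P}_{2,t}^{(AB,B^2)}(Bv)
=\inf_{x\le Bv}\frac{\mathbb{E}[(M_t'-x)_+^2]}{(Bv-x)_+^2}
=\inf_{y\le v}\frac{B^2\,\mathbb{E}[(M_t-y)_+^2]}{B^2(v-y)_+^2}
=\tilde{P}_{2,t}^{(A,B)}(v).
\end{equation}
Thus $\tilde{P}_{2,t}^{(AB,B^2)}(Bv)=\tilde{P}_{2,t}^{(A,B)}(v)$ for every $v\in\mathbb{R}$.

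Finally I would invert this relation. If $v=q(\delta;t,A,B)$, then by definition $\tilde{P}_{2,t}^{(A,B)}(v)=\delta$, and so $\tilde{P}_{2,t}^{(AB,B^2)}(Bv)=\delta$, which by the definition \eqref{eq:quantile-A-B-2nd-version} of $q(\cdot;t,AB,B^2)$ gives $Bv=q(\delta;t,AB,B^2)$. This is the desired identity $q(\delta;t,AB,B^2)=B\,q(\delta;t,A,B)$. The argument is essentially book-keeping; the only potentially delicate point is confirming that the infimum and the change of variable $x=By$ commute and that the corner cases ($\delta<\mathbb{P}(M_t=tB)$, where $q$ is defined by a convention) scale consistently, but both follow immediately from the distributional identity $M_t'\stackrel{d}{=}BM_t$ since the support of $M_t'$ is $B$ times the support of $M_t$.
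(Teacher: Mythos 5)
Your proposal is correct and follows essentially the same route as the paper: both identify $G_i'\overset{d}{=}BG_i$ (hence $M_t'\overset{d}{=}BM_t$) from the definition~\eqref{eq:G_distribution}, pull the factor $B$ through the positive-part second moment and the denominator, and rescale the infimum variable to conclude $\tilde{P}_{2,t}^{(AB,B^2)}(Bv)=\tilde{P}_{2,t}^{(A,B)}(v)$ before inverting. The only (cosmetic) difference is that you explicitly flag the degenerate case $\delta<\mathbb{P}(M_t=tB)$, which the paper's proof does not discuss; note that under the paper's literal convention ($q=nB+1$ there) the identity would read $tB^2+1$ versus $tB^2+B$, so strictly speaking the lemma is only used, and only holds, in the regime where the inverse exists.
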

\begin{proof}
Recall that $q(\delta; t, AB, B^2)$ is defined as the solution of
\[
\inf_{x\le u}\,\frac{\mathbb{E}[(M_t' - x)^2_+]}{(u - x)_+^2} = \delta,
\]
where $M_t'$ is defined as $M_t' = \sum_{i=1}^t G_i'$ with
\begin{align*}
\mathbb{P}\left(G_i' = -(A^2B^2)/B^2\right) &= \frac{B^4}{A^2B^2 + B^4} = \frac{B^2}{A^2 + B^2}\quad\mbox{and},\\
\mathbb{P}\left(G_i' = B^2\right) &= \frac{A^2B^2}{A^2B^2 + B^4} = \frac{A^2}{A^2 + B^2}.
\end{align*}
This implies that $G_i'\overset{d}{=}BG_i$ and hence $M_t'\overset{d}{=}BM_t$. Therefore,
\[
\mathbb{E}[(M_t' - x)_+^2] = \mathbb{E}[(BM_t - x)_+^2] = B^2\mathbb{E}[(M_t - x/B)_+^2],
\]
and
\[
\inf_{x\le u}\,\frac{\mathbb{E}[(M_t' - x)_+^2]}{(u - x)_+^2} = B^2\inf_{x\le u}\,\frac{\mathbb{E}[(M_t - x/B)_+^2]}{B^2(u/B - x/B)_+^2} = \inf_{x \le u/B}\,\frac{\mathbb{E}[(M_t - x)_+^2]}{(u/B - x)_+^2}.
\]
The right hand side above equals $\delta$, when $u = Bq(\delta; t, A, B)$ because the definition of $q(\delta; t, A, B)$ implies that
\[
\inf_{x \le q(\delta; t, A, B)}\,\frac{\mathbb{E}[(M_t - x)_+^2]}{(q(\delta; t, A, B) - x)_+^2} = \delta.
\]
This completes the proof.
\end{proof}

\section{Alternative Empirical Bentkus Confidence Sequences with Estimated Variance} \label{appsec:alternative_variance_estimation}
In Section~\ref{sec:empirical-Bentkus}, we presented one actionable version of Theorem~\ref{thm:uniform-in-n}, where we used an analytical upper bound on the variance $A^2$. In this section, we present an alternative empirical Bentkus confidence sequence that requires numerical computation. In our initial experiments, we found solving for the upper bound of $A$ in this way to be unstable. Because the proof technique here is very analogues to that of the empirical Bernstein bound in~\citet[Eq. (48)-(50)]{audibert2009exploration}, we present the alternative bound below.

Define the empirical variance as
\begin{equation}\label{eq:variance-def}\textstyle
\widehat{A}_n^2 ~:=~ n^{-1}\sum_{i=1}^n (X_i - \widebar{X}_n)^2,\quad\mbox{where}\quad \widebar{X}_n = n^{-1}\sum_{i=1}^n X_i.
\end{equation}
% Combining Theorem~\ref{thm:uniform-in-n} and Lemma~\ref{lem:variance-estimation},
For any $\delta_1, \delta_2\in[0,1]$, define
\begin{equation}\label{eq:variance-upper-bound}\textstyle
% \begin{split}
% \widehat{\mathcal{A}}_t &:= \left\{a\ge0:\,\right\},\\
\widebar{A}_n := \sup\left\{a\ge0:\,\widehat{A}_n^2 ~\ge~ a^2 - \frac{B}{n}q\left(\frac{\delta_1}{h(k_n)}; c_n, a, B\right) - \frac{1}{n^2}q^2\left(\frac{\delta_2}{2h(k_n)}; c_n, a, B\right)\right\}.
% \end{split}
\end{equation}
Lemma~\ref{lem:variance-estimation} shows that $\widebar{A}_n$ is an over-estimate of $A$ uniformly over $n$ and yields the following actionable bound.
Recall that $S_n = \sum_{i=1}^n X_i = n\widebar{X}_n$.
\begin{thm}\label{thm:uniform-in-n-empirical-variance}
If $X_1, X_2, \ldots$ are mean-zero independent random variables satisfying
$\mathrm{Var}(X_i) = A^2$ and $\mathbb{P}(|X_i| > B) = 0$ for all $i\ge1$,
then for any $\delta_1, \delta_2 \in [0, 1]$,
\[
\mathbb{P}\left(\exists n\ge1:\,\abs{S_n} \ge q\left(\frac{\delta_2}{2h(k_n)}; c_n, \widebar{A}_n^*, B\right)\quad\mbox{or}\quad A \ge \widebar{A}^*_n(\delta_1) \right) \le \delta_1 + \delta_2,
\]
where $\widebar{A}^*_n := \min_{1\le s\le n}\widebar{A}_s$. Here $k_n$ and $c_n$ are same as those defined in Theorem~\ref{thm:uniform-in-n}.
\end{thm}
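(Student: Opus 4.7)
The proof plan is to combine three earlier building blocks: the maximal uniform-in-$n$ Bentkus inequality of Theorem~\ref{thm:uniform-in-n}, the uniform-in-$n$ variance over-estimate supplied by Lemma~\ref{lem:uniform-in-n-variance}, and the monotonicity $A\mapsto q(\delta;n,A,B)$ from Lemma~\ref{lem:non-decreasing-quantile}. Each piece controls one source of uncertainty: the first gives the deviation of $S_n$ when the variance is known, the second gives a data-dependent upper envelope on the unknown $A$, and the third lets us substitute the envelope for $A$ inside the quantile without sacrificing validity. The remaining work is a union bound and a substitution.

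For the upper-tail claim, I would first invoke Theorem~\ref{thm:uniform-in-n} with $X_i$ and upper bound $B$ to obtain
\[
\mathbb{P}\!\left(\exists\, n\ge 1:\ S_n\ge q\!\left(\tfrac{\delta_1}{h(k_n)};c_n,A,B\right)\right)\le \delta_1.
\]
Next, Lemma~\ref{lem:uniform-in-n-variance} isolates the event $\{A\le \widebar{A}_n(\delta_2,B,\underline{B}) \text{ for all }n\ge 1\}$ of probability $\ge 1-\delta_2$; on this event one automatically has $A\le \widebar{A}_n^*(\delta_2,B,\underline{B})$ since the running minimum of valid upper bounds is still a valid upper bound. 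A union bound yields an event of probability $\ge 1-\delta_1-\delta_2$ on which both statements hold for every $n$. There, Lemma~\ref{lem:non-decreasing-quantile} upgrades the first inequality to
\[
S_n\ \le\ q\!\left(\tfrac{\delta_1}{h(k_n)};c_n,A,B\right)\ \le\ q\!\left(\tfrac{\delta_1}{h(k_n)};c_n,\widebar{A}_n^*(\delta_2),B\right),
\]
which is the desired actionable bound; the variance bound is recorded directly.

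The lower-tail statement is obtained by rerunning the same argument on $X_i':=-X_i$, which are mean-zero, have variance $A^2$, and satisfy $\mathbb{P}(-B\le X_i'\le -\underline{B})=1$, so the upper bound for $X_i'$ becomes $-\underline{B}$. The variance envelope $\widebar{A}_n(\delta_2,B,\underline{B})$ depends on the pair $(B,\underline{B})$ only through the width $B-\underline{B}$, so exactly the same variance event carries over; combining it with the corresponding Theorem~\ref{thm:uniform-in-n} bound for $-X_i$ and the same monotonicity step gives $S_n\ge -q(\delta_1/h(k_n);c_n,\widebar{A}_n^*(\delta_2),-\underline{B})$.

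The genuinely hard work is not in the present theorem but inside Lemma~\ref{lem:uniform-in-n-variance}: one must show that the paired-difference process $V_t-\lfloor t/2\rfloor A^2$, with $V_t=\sum_{i=1}^{\lfloor t/2\rfloor}(X_{2i}-X_{2i-1})^2/2$, admits a maximal second-moment tail strong enough to give a $\Phi^{-1}$-type envelope via Corollary~2.7 of~\citet{pinelis2016optimal}, then invert the resulting quadratic in $A$ and stitch across geometric epochs to gain uniformity in $n$. A secondary subtlety worth flagging is that one must verify the substitution step is monotone in the correct direction, which is exactly what Lemma~\ref{lem:non-decreasing-quantile} provides by reducing to the non-decreasingness of $A\mapsto \mathbb{E}[(M_t-x)_+^2]$. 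Once these two ingredients are in hand, the bookkeeping described above finishes the proof.
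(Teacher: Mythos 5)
Your argument proves a different theorem. The over-estimator $\widebar{A}_n$ appearing in Theorem~\ref{thm:uniform-in-n-empirical-variance} is the one defined in~\eqref{eq:variance-upper-bound}: a supremum over all $a\ge 0$ satisfying a self-referential inequality that involves the ordinary empirical variance $\widehat{A}_n^2=n^{-1}\sum_{i=1}^n(X_i-\widebar{X}_n)^2$ and the quantile function $q(\cdot\,;c_n,a,B)$ evaluated at the candidate $a$ itself. You instead invoke Lemma~\ref{lem:uniform-in-n-variance}, which controls the closed-form, paired-difference estimator $\widebar{A}_n(\delta_2,B,\underline{B})$ of~\eqref{eq:variance-over-estimation} built from $(X_{2i}-X_{2i-1})^2/2$ and a $\Phi^{-1}$ envelope. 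That lemma says nothing about the set defined in~\eqref{eq:variance-upper-bound}, so the central claim you need --- that $A\le\widebar{A}_n$ simultaneously for all $n$ with probability at least $1-\delta_1$ --- is never established. What you have written is essentially the proof of Theorem~\ref{thm:uniform-in-n-empirical-Bentkus}, including its one-sided upper/lower-tail structure and its use of $\underline{B}$, which does not appear in the present theorem: here the hypothesis is the symmetric bound $\P(|X_i|>B)=0$ and the conclusion is a single two-sided statement about $|S_n|$, with $\delta_2/2$ inside the quantile and $\delta_1$ attached to the variance event (you have also swapped these roles).

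The missing ingredients are Lemma~\ref{lem:variance-estimation} and the self-bounding step. Lemma~\ref{lem:variance-estimation} is obtained by applying Theorem~\ref{thm:uniform-in-n} to the mean-zero variables $X_i^2-\E[X_i^2]$ (bounded by $B^2$, with variance at most $A^2B^2$) and then using the homogeneity identity $q(\delta;t,AB,B^2)=Bq(\delta;t,A,B)$ of Lemma~\ref{lem:homogenity-delta-function}; it yields, uniformly in $t$ and with probability at least $1-\delta_1$, the bound $\widehat{A}_t^2\ge A^2-\tfrac{B}{t}\,q(\tfrac{\delta_1}{h(k_t)};c_t,A,B)-\tfrac{1}{t^2}|S_t|^2$. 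Intersecting this with the event $|S_t|\le q(\tfrac{\delta_2}{2h(k_t)};c_t,A,B)$ from Theorem~\ref{thm:uniform-in-n} (applied to $\pm X_i$ with budget $\delta_2/2$ each) shows that $a=A$ belongs to the set whose supremum defines $\widebar{A}_t$, whence $A\le\widebar{A}_t$ and therefore $A\le\widebar{A}_t^*$ for all $t$. Only after this does your final step --- Lemma~\ref{lem:non-decreasing-quantile} to replace $A$ by $\widebar{A}_n^*$ inside $q$ --- go through; that substitution and the concluding union bound are the parts of your outline that are correct and do match the paper.
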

This theorem is an analogue of the empirical Bernstein inequality \citet[Eq. (5)]{mnih2008empirical}. Furthermore, the upper bound $\widebar{A}_n$ on $A$ is better than that in the Bernstein version \citet[Eq. (49)-(50)]{audibert2009exploration}; see Lemma~\ref{lem:better-than-Bernstein-variance}.  %The proof is deferred to Appendix~\ref{appsec:proof-of-lemma-variance-estimation}, and the computation of $\widebar{A}_n$ is discussed in Appendix~\ref{appsec:computation-of-bnd-on-A}.
\subsection{Proof of Theorem~\ref{thm:uniform-in-n-empirical-variance} and Comparison of Standard Deviation Estimation from Other Inequalities}
\begin{lem}\label{lem:variance-estimation}
If $X_1, X_2, \ldots$ are mean-zero independent random variables satisfying
\[
\mathrm{Var}(X_i) = A^2\quad\mbox{and}\quad \mathbb{P}(|X_i| > B) = 0,\quad\mbox{for all}\quad i\ge1,
\]
then for any $\delta\in[0,1]$
\[
\mathbb{P}\left(\exists\,t\ge1:\,\widehat{A}_t^2 ~\le~ A^2 - \frac{B}{t}q\left(\frac{\delta}{h(k_t)}; c_t, A, B\right) - \frac{1}{t^2}\left|\sum_{i=1}^t X_i\right|^2\right) \le \delta.
\]
\end{lem}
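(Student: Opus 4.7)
The plan is to apply the stitched Bentkus bound (Theorem~\ref{thm:uniform-in-n}) to the auxiliary sequence $Z_i := A^2 - X_i^2$, then rearrange the resulting tail bound into the claimed form using the identity
\[
\widehat{A}_t^2 \;=\; \frac{1}{t}\sum_{i=1}^t X_i^2 \;-\; \bar{X}_t^2 \;=\; \frac{1}{t}\sum_{i=1}^t X_i^2 \;-\; \frac{1}{t^2}\Bigl|\sum_{i=1}^t X_i\Bigr|^2.
\]

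First I would verify that $\{Z_i\}$ satisfies assumption~\eqref{eq:boundedness-conditions} with a suitable choice of variance and upper-bound parameters. Mean zero is immediate from $\mathbb{E}[X_i^2] = A^2$. For the one-sided bound, $X_i^2 \ge 0$ gives $Z_i \le A^2 \le B^2$ (using $A \le B$, which follows from $|X_i|\le B$). For the variance, $\mathrm{Var}(Z_i) = \mathrm{Var}(X_i^2) = \mathbb{E}[X_i^4] - A^4 \le B^2\mathbb{E}[X_i^2] - A^4 = A^2(B^2 - A^2) \le (AB)^2$. Hence $\{Z_i\}$ fits the Bentkus framework with (uniform) standard-deviation parameter $AB$ and upper-bound parameter $B^2$.

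Theorem~\ref{thm:uniform-in-n} applied to $Z_i$ then gives, for any $\delta\in[0,1]$,
\[
\mathbb{P}\!\left(\exists\,t\ge 1:\ \sum_{i=1}^t Z_i \ge q\!\left(\tfrac{\delta}{h(k_t)};\,c_t,\,AB,\,B^2\right)\right) \le \delta.
\]
Invoking the homogeneity identity in Lemma~\ref{lem:homogenity-delta-function}, $q(\cdot;\,t,\,AB,\,B^2) = B\,q(\cdot;\,t,\,A,\,B)$, this rewrites as
\[
\mathbb{P}\!\left(\exists\,t\ge 1:\ tA^2 - \sum_{i=1}^t X_i^2 \ge B\,q\!\left(\tfrac{\delta}{h(k_t)};\,c_t,\,A,\,B\right)\right) \le \delta.
\]

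The last step is purely algebraic: on the complement event, dividing through by $t$ and subtracting $\bar{X}_t^2 = t^{-2}|\sum_{i=1}^t X_i|^2$ from both sides yields
\[
\widehat{A}_t^2 \;\ge\; A^2 - \frac{B}{t}\,q\!\left(\tfrac{\delta}{h(k_t)};\,c_t,\,A,\,B\right) - \frac{1}{t^2}\Bigl|\sum_{i=1}^t X_i\Bigr|^2,
\]
simultaneously for all $t\ge 1$, which is exactly the negation of the event in the statement. The only subtlety is that the Bentkus worst-case construction uses the \emph{equality} $\mathrm{Var}(G_i) = A_i^2$ whereas we only know $\mathrm{Var}(Z_i) \le (AB)^2$; this is handled by Lemma~\ref{lem:non-decreasing-quantile}, which ensures that inflating the variance parameter from the true value to $(AB)^2$ only enlarges the quantile, so the tail bound remains valid. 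Choosing the pair $(AB, B^2)$ rather than the alternative $(A\sqrt{B^2 - A^2},\, A^2)$ is precisely what aligns with the homogeneity identity and produces the factor $B\,q(\cdot;\,c_t,\,A,\,B)$ appearing on the right-hand side; spotting this scaling is the only real content of the proof.
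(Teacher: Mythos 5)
Your proposal is correct and follows essentially the same route as the paper: apply the stitched bound of Theorem~\ref{thm:uniform-in-n} to the mean-zero sequence $A^2 - X_i^2$ with parameters $(AB, B^2)$, convert the quantile via the homogeneity identity $q(\cdot;\,t,\,AB,\,B^2)=B\,q(\cdot;\,t,\,A,\,B)$ of Lemma~\ref{lem:homogenity-delta-function}, and rearrange using $\widehat{A}_t^2=t^{-1}\sum_i X_i^2-\bar X_t^2$. The only cosmetic difference is that you invoke Lemma~\ref{lem:non-decreasing-quantile} to justify using the variance upper bound $(AB)^2$, whereas this is already covered by the form of assumption~\eqref{eq:boundedness-conditions}, which only requires $\mathrm{Var}\le A_i^2$.
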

\begin{proof}
Consider the random variable $X_i^2 - \mathbb{E}[X_i^2]$. These are mean zero and are bounded in absolute value by $B^2$. Further the variance can be bounded as
\[
\mathrm{Var}(X_i^2 - \mathbb{E}[X_i^2]) = \mathbb{E}[(X_i^2 - \mathbb{E}[X_i^2])^2] \le B^2\mathbb{E}[|X_i|^2] = B^2A^2.
\]
Applying Theorem~\ref{thm:uniform-in-n} with variables $X_i^2 - \mathbb{E}[X_i^2]$ implies
\[
\mathbb{P}\left(\exists t\ge1:\,\sum_{i=1}^t -(X_i^2 - \mathbb{E}[X_i^2]) \ge q\left(\frac{\delta}{h(k_t)};c_t, AB, B^2\right)\right) \le \delta.
\]
Lemma~\ref{lem:homogenity-delta-function} proves that
\[
q\left(\frac{\delta}{h(k_t)};c_t, AB, B^2\right) = Bq\left(\frac{\delta}{h(k_t)};c_t, A, B\right).
\]
% The conclusion of Theorem 1 yields
% \[
% \mathbb{P}\left(\exists t\ge1:\,\left|\sum_{i=1}^t X_i\right| \ge q\left(\frac{\delta_2}{2h(k_t)};c_t, A, B\right)\right) \le \delta_2.
% \]
Hence we get with probability at least $1 - \delta$, simultaneously for all $t\ge1$
\begin{align*}
\sum_{i=1}^t (X_i - \widebar{X}_t)^2 ~&=~ \sum_{i=1}^t X_i^2 - \frac{1}{t}\left(\sum_{i=1}^t X_i\right)^2\\
~&\ge~ \sum_{i=1}^t \mathbb{E}[X_i^2] - Bq\left(\frac{\delta}{h(k_t)}; c_t, A, B\right) - \frac{1}{t}\left|\sum_{i=1}^t X_i\right|^2.
\end{align*}
Hence for any $\delta\in[0,1]$,
\[
\mathbb{P}\left(\exists\,t\ge1:\,t\widehat{A}_t^2 \le tA^2 - Bq\left(\frac{\delta}{h(k_t)}; c_t, A, B\right) - \frac{1}{t}\left|\sum_{i=1}^n X_i\right|^2\right) \le \delta.
\]
This completes the proof.
\end{proof}

We will now prove Theorem~\ref{thm:uniform-in-n-empirical-variance}. Theorem~\ref{thm:uniform-in-n} implies that
\begin{equation}\label{eq:implication-thm-unknown-A}
\mathbb{P}\left(\exists\, t\ge 1:\,\left|\sum_{i=1}^t X_i\right| \ge q\left(\frac{\delta_2}{2h(k_t)}; c_t, \mathcal{A}, B\right)\right) \le \delta_2,
\end{equation}
Lemma~\ref{lem:variance-estimation} implies that
\[
\mathbb{P}\left(\exists\,t\ge1:\,\widehat{A}_t^2 ~\le~ \frac{t}{t-1}A^2 - \frac{B}{t-1}q\left(\frac{\delta_1}{h(k_t)}; c_t, A, B\right) - \frac{1}{t(t-1)}\left|\sum_{i=1}^t X_i\right|^2\right) \le \delta_1.
\]
Hence with probability at least $1 - \delta_1 - \delta_2$, simultaneously for all $t\ge1,$
\begin{align*}
\left|\sum_{i=1}^t X_i\right| ~&\le~ q\left(\frac{\delta_2}{2h(k_t)}; c_t, A, B\right),\\
\widehat{A}_t^2 ~&\le~ \frac{t}{t-1}A^2 - \frac{B}{t-1}q\left(\frac{\delta_1}{h(k_t)}; c_t, A, B\right) - \frac{1}{t(t-1)}\left|\sum_{i=1}^t X_i\right|^2
\end{align*}
On this event, $A \le \widebar{A}_t$ simultaneously for all $t\ge1$ which in turn implies that $A\le \min_{1\le s\le t}\widebar{A}_s$ also holds simultaneously for all $t\ge1$. Substituting this in~\eqref{eq:implication-thm-unknown-A} (along with Lemma~\ref{lem:non-decreasing-quantile}) implies the result.
\begin{lem}\label{lem:better-than-Bernstein-variance}
Suppose $\delta\mapsto\tilde{q}(\delta^{1/n}; A, B)$ is a function such that
\begin{equation}\label{eq:power-quantile-bound}
\mathbb{P}\left(\max_{1\le t\le n}\,S_t \ge n\tilde{q}(\delta^{1/n}; A, B)\right) \le \delta,
\end{equation}
for all $\delta\in[0, 1]$ and independent random variables $X_1, \ldots, X_n$ satisfying~\eqref{eq:boundedness-conditions}. Define the (over)-estimator of $A$ as
\[
\tilde{A}_t ~:=~ \sup\left\{a\ge0:\,\widehat{A}_t^2 \ge a^2 - \frac{Bc_t}{t}\tilde{q}\left((\delta/(3h(k_t)))^{1/c_t}; a, B\right) - \frac{c_t^2}{t^2}\tilde{q}^2\left((\delta/(3h(k_t)))^{1/c_t}; a, B\right)\right\}.
\]
Then $\widebar{A}_n \le \tilde{A}_n.$
\end{lem}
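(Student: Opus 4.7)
My plan is to reduce the inequality $\widebar{A}_n\le\tilde{A}_n$ to a pointwise comparison of the two quantile functions, using the sharpness half of Theorem~\ref{thm:initial-maximal-ineq}. The lemma statement uses a single parameter $\delta$ for $\tilde{A}_n$ while $\widebar{A}_n$ was defined in terms of $(\delta_1,\delta_2)$, so the first step is to specialize $\delta_1=\delta/3$ and $\delta_2=2\delta/3$ in the definition of $\widebar{A}_n$; then both probability arguments appearing inside $q(\cdot)$ collapse to $\delta/(3h(k_n))$, and $\widebar{A}_n$ becomes
\[
\widebar{A}_n=\sup\Bigl\{a\ge 0:\;\widehat{A}_n^{\,2}\ge a^2-\tfrac{B}{n}\,q\bigl(\tfrac{\delta}{3h(k_n)};c_n,a,B\bigr)-\tfrac{1}{n^2}\,q^2\bigl(\tfrac{\delta}{3h(k_n)};c_n,a,B\bigr)\Bigr\}.
\]

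Next, I would invoke the second part of Theorem~\ref{thm:initial-maximal-ineq}. Its hypothesis is exactly \eqref{eq:power-quantile-bound}, so applying it to the sample size $c_n$ with probability level $\delta/(3h(k_n))$ yields, for every $a\ge 0$,
\[
q\bigl(\tfrac{\delta}{3h(k_n)};c_n,a,B\bigr)\;\le\;c_n\,\tilde{q}\bigl((\tfrac{\delta}{3h(k_n)})^{1/c_n};a,B\bigr).
\]
Write $Q(a):=q(\delta/(3h(k_n));c_n,a,B)$ and $\tilde{Q}(a):=c_n\,\tilde{q}((\delta/(3h(k_n)))^{1/c_n};a,B)$, so that $0\le Q(a)\le \tilde{Q}(a)$.

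With this notation, define
\[
f(a):=a^2-\tfrac{B}{n}Q(a)-\tfrac{1}{n^2}Q(a)^2,\qquad g(a):=a^2-\tfrac{B}{n}\tilde{Q}(a)-\tfrac{1}{n^2}\tilde{Q}(a)^2,
\]
so that $\widebar{A}_n=\sup\{a\ge 0:\widehat{A}_n^{\,2}\ge f(a)\}$ and, matching the definition in the lemma, $\tilde{A}_n=\sup\{a\ge 0:\widehat{A}_n^{\,2}\ge g(a)\}$. Because $B,n>0$ and $0\le Q(a)\le \tilde{Q}(a)$, both penalty terms in $g(a)$ are at least as negative as the corresponding ones in $f(a)$, hence $f(a)\ge g(a)$ pointwise. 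Consequently $\{a:\widehat{A}_n^{\,2}\ge f(a)\}\subseteq\{a:\widehat{A}_n^{\,2}\ge g(a)\}$, and taking suprema gives $\widebar{A}_n\le\tilde{A}_n$.

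There is essentially no analytic difficulty; the only potential pitfall is bookkeeping the $\delta$ budget so that the arguments of $q$ in the Bentkus definition genuinely line up with the argument of $\tilde{q}$ in the alternative definition. Once $\delta_1,\delta_2$ are chosen so that both quantile arguments equal $\delta/(3h(k_n))$, the comparison reduces to monotonicity of $x\mapsto -\tfrac{B}{n}x-\tfrac{1}{n^2}x^2$ on $[0,\infty)$ applied to Theorem~\ref{thm:initial-maximal-ineq}, so the proof is short.
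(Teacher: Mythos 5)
Your proof is correct and follows essentially the same route as the paper's: invoke the second part of Theorem~\ref{thm:initial-maximal-ineq} to get $q(\delta/(3h(k_n));c_n,a,B)\le c_n\,\tilde{q}((\delta/(3h(k_n)))^{1/c_n};a,B)$, then use monotonicity of the quadratic penalty in the quantile to nest the defining sets and compare suprema. Your explicit bookkeeping of $\delta_1=\delta/3$, $\delta_2=2\delta/3$ is in fact slightly more careful than the paper's own write-up, which leaves that specialization implicit.
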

\begin{proof}
We have proved in Appendix~\ref{appsec:proof-of-theorem-initial-maximal-ineq} that~\eqref{eq:power-quantile-bound} implies
\[
q\left(\delta; n, a, B\right) \le n\tilde{q}\left(\delta^{1/n}; a, B\right),
\]
for all $n, a,$ and $B$. Hence if $a$ satisfies
\[
\widehat{A}_t^2 \ge a^2 - \frac{B}{t}q\left(\frac{\delta}{3h(k_t)}; c_t, a, B\right) - \frac{1}{t^2}q^2\left(\frac{\delta}{3h(k_t)}; c_t, a, B\right),
\]
then
\[
\widehat{A}_n^2 \ge a^2 - \frac{Bc_t}{t}\tilde{q}\left((\delta/(3h(k_t)))^{1/c_t}; a, B\right) - \frac{c_t^2}{t^2}\tilde{q}^2\left((\delta/(3h(k_t)))^{1/c_t}; a, B\right),
\]
which implies the result.
\end{proof}

\end{document}